\numberwithin{equation}{section}
\newcommand{\cY}{{\mathcal{Y}}}
\newcommand{\cO}{{\mathcal{O}}}
\newcommand{\cA}{{\mathcal{A}}}
\newcommand{\cB}{{\mathcal{B}}}
\newcommand{\cC}{\mathcal{C}}
\newcommand{\cR}{\mathcal{R}}
\newcommand{\cT}{\mathcal{T}}
\newcommand{\calS}{\mathcal{S}}
\newcommand{\A}{\mathbf{A}}
\newcommand{\G}{\mathbf{G}}
\newcommand{\ZZ}{\mathbf{Z}}
\newcommand{\N}{\mathbf{N}}
\newcommand{\FF}{\mathbf{F}}
\newcommand{\QQ}{\mathbf{Q}}
\newcommand{\spec}{{\mathrm{Spec}}}
\newcommand{\Maps}{\mathrm{Maps}}
\newcommand{\Shv}{\mathrm{Shv}}
\newcommand{\Qcoh}{\mathrm{QCoh}}
\newcommand{\Gr}{\mathrm{Gr}}
\newcommand{\Pic}{\mathrm{Pic}}
\newcommand{\fil}{\mathrm{Fil}}
\newcommand{\gr}{\mathrm{gr}}
\newcommand{\into}{\hookrightarrow}
\newcommand{\crys}{\mathrm{crys}}
\newcommand{\conj}{\mathrm{conj}}
\newcommand{\Cone}{\mathrm{Cone}}
\newcommand{\tensor}{\otimes}
\newcommand{\cofib}{\mathrm{cofib}}
\newcommand{\cn}{\mathrm{cn}}
\DeclareSymbolFontAlphabet{\mathbb}{AMSb} 
\DeclareSymbolFontAlphabet{\mathbbl}{bbold}
\newcommand{\Prism}{{\mathlarger{\mathbbl{\Delta}}}}
\newcommand{\comment}[1]{}
\newcommand{\cosimp}[3]{\xymatrix@1{#1 \ar@<.4ex>[r] \ar@<-.4ex>[r] & {\ }#2 \ar@<0.8ex>[r] \ar[r] \ar@<-.8ex>[r] & {\ } #3 \ar@<1.2ex>[r] \ar@<.4ex>[r] \ar@<-.4ex>[r] \ar@<-1.2ex>[r] & \cdots }}
\newcommand{\simp}[3]{%
  \xymatrix@1{%
    \cdots \ar@<1.2ex>[r] \ar@<.4ex>[r] \ar@<-.4ex>[r] \ar@<-1.2ex>[r] &
    {\ }#3 \ar@<0.8ex>[r] \ar[r] \ar@<-.8ex>[r] &
    {\ }#2 \ar@<.4ex>[r] \ar@<-.4ex>[r] &
    #1
  }%
}
\newcommand{\simpII}[2]{%
  \xymatrix@1{%
    \cdots \ar@<0.8ex>[r] \ar[r] \ar@<-0.8ex>[r] &
    {\ }#2 \ar@<0.4ex>[r] \ar@<-0.4ex>[r] &
    #1
  }%
}
\begin{document}
\newtheorem{theorem}{Theorem}[section]
\newtheorem*{theorem*}{Theorem}
\newtheorem*{condition*}{Condition}
\newtheorem*{definition*}{Definition}
\newtheorem{proposition}[theorem]{Proposition}
\newtheorem{lemma}[theorem]{Lemma}
\newtheorem{corollary}[theorem]{Corollary}
\newtheorem{claim}[theorem]{Claim}

\theoremstyle{definition}
\newtheorem{definition}[theorem]{Definition}
\newtheorem{question}[theorem]{Question}
\newtheorem{goal}[theorem]{Goal}
\newtheorem{remark}[theorem]{Remark}
\newtheorem{guess}[theorem]{Guess}
\newtheorem{example}[theorem]{Example}
\newtheorem{condition}[theorem]{Condition}
\newtheorem{warning}[theorem]{Warning}
\newtheorem{caution}[theorem]{Caution}
\newtheorem{notation}[theorem]{Notation}
\newtheorem{construction}[theorem]{Construction}
\newtheorem{recollection}[theorem]{Recollection}
\newtheorem{variant}[theorem]{Variant}

\title{De Rham affineness of the Nygaard filtered prismatization in positive characteristic}
\begin{abstract}
Let $k$ be a perfect ring of characteristic $p>0$, and let $R$ be an animated $k$-algebra. This note aims to show that the Nygaard filtered prismatization $R^\nyg$ of $R$ is naturally isomorphic, as a stack over $k^\nyg$, to the relative spectrum over $k^\nyg$ of the Rees algebra of the Nygaard filtered prismatic cohomology of $R$ relative to $k$. In doing so, we axiomatise the functorial affineness property displayed by the relative Nygaard filtered prismatization,  and dub it \emph{de Rham affineness} after the fundamental example of the functor sending an animated ring to its relative de Rham stack. While we treat this concept as an organising tool for the author's forthcoming work studying the syntomification of Frobenius liftable schemes \cite{Sah25Syn}, we are able to frame some questions based on a structural result of independent interest: a functor to stacks which is de Rham affine often arises via ring stacks through transmutation.
\end{abstract}
\author{Shubhankar Sahai}
\address{Department of Mathematics, University of California San Diego, La Jolla, CA 92093, USA}
\email{ssahai@ucsd.edu}
\subjclass[2020]{Primary $14\text{F}30$; Secondary $14\text{F}40$, $14\text{G}17$,$14\text{G}45$ }
\keywords{de Rham cohomology, crystalline cohomology, prismatic cohomology, Nygaard filtration, derived algebraic geometry, affine stacks}
\maketitle
\setcounter{tocdepth}{1}
\tableofcontents
\section{Introduction}

Fix a prime $p$ and a perfect ring $k$ of characteristic $p.$ Our main result is a relative affineness statement for the Nygaard filtered prismatization of animated $k$-algebras.

We begin with the introduction which is organized as follows: \S~\ref{sec: background} provides background contextualizing our theorem; \S~\ref{sec: results intro} summarizes the main result; \S~\ref{sec: overview} outlines the layout of the note; \S~\ref{forthcoming} places the result in the context of our forthcoming work; \S~\ref{sec: conventions} records conventions; and \S~\ref{sec: acknowledgements} contains acknowledgements. 

We advise the reader to glance at the conventions first, as we freely use the language of $\icats$ and derived algebraic geometry; this choice is reflected in our notation.

\subsection{Background.}\label{sec: background}

Let $X$ be a smooth scheme over $k$. The de Rham cohomology $\dR_{X/k}$ of $X$, arising as the hypercohomology of the de Rham complex on $X$, is a coconnective $E_\infty$-algebra over $k$.

The $E_\infty$-ring $\dR_{X/k}$ can also be recovered via the theory of the de Rham stack. This produces, for smooth $X$, a stack $(X/k)^\dR$ so that as $E_\infty$-algebras over $k$, one has $$\Gamma((X/k)^\dR,\cO_{(X/k)^\dR})\simeq \dR_{X/k}.$$

In \cite{reconstruction_mondal}, Mondal answers a question of Illusie by showing that when $X$ is affine then $\dR_{X/k}$ along with its multiplicative structure is enough to \emph{reconstruct} the stack $(X/k)^\dR$, in fact showing that the stack is the \emph{spectrum} of the coconnective $E_\infty$-ring $\dR_{X/k}$ in the sense of \cite[Construction 1.2]{MM24} i.e.   as stacks on $k$-algebras one has 
$$(X/k)^\dR\simeq \Maps_{\Calg_k}(\dR_{X/k},-)$$ where $\Calg_k$ is the $\icat$ of $E_\infty$-algebras over $k.$

The two viewpoints on de Rham cohomology reflect different structures of the theory, and one can hope to use this result to pass from one side to the other as the forthcoming example shows.

\begin{example}[The de Rham gerbe]\label{eg: petrov example}
   The de Rham stack $(X/k)^\dR$ admits a morphism $(X/k)^\dR\to X^{(1)}$, where the target is the Frobenius twist of $X$ relative to $k$. This makes the source a gerbe banded by the flat group scheme $\mathbf{V}(T^{\sharp}_{X^{(1)}/k})$, obtained by the divided power completion at the origin of the tangent space $T_{X^{(1)}/k}$. This gerbe is closely related to the gerbe of Frobenius liftings of $X$ and therefore splits when $X/k$ admits a $p$-completely flat Frobenius lift to $W(k)$ by \cite[Proposition 5.12]{APCII}. However, viewing the stack as $\spec(\dR_{X/k})$ one should expect that it splits as soon as $X$ admits a flat Frobenius lift mod $p^2$ because $\dR_{X/k}$ splits by \cite{DeligneIllusie1987Relevements}. Indeed, this is true, see \cite[Remark 4.3]{Petrov2025QuasiFSplitDeRham}\footnote{Although we do not know if this was the original motivation for this result.}. For more on this and other deep applications of the gerbe perspective on the de Rham stack, see \cite{Petrov2025QuasiFSplitDeRham}. 
\end{example}

The theory of prismatic cohomology, discovered in \cite{BhattMorrowScholze2018IntegralPadicHodge}, \cite{BhattMorrowScholze2019THHandPadicHodge} and \cite{BS19} can be viewed as a crystalline lift of the theory of de Rham cohomology in mixed characteristic and has deep applications in several related fields. We refer the reader to \cite{Bhatt2023AlgGeomMixedChar} for a (relatively) recent overview.

In \cite{Drin20}, \cite{APC}, \cite{APCII} and \cite{Bhatt22}, the authors introduced a view point on absolute (and relative) prismatic cohomology which gives a notion of coefficients for the theory by associating to any smooth $p$-adic formal scheme $X/\ZZ_p$ a $p$-adic formal stack $X^\Prism$ and defining prismatic crystals as the stable $\icat$ $\Qcoh(X^\Prism)$. Similarly one has stacks $X^\nyg$ and $X^\syn$ capturing the Nygaard filtration on $X$ and the syntomic cohomology of $X.$ 

We refer the reader to \cite{Drinfeld2019StackyCrystalline} for a gentle introduction to these ideas.

Let $(A,I)$ be a prism and set $\oA=A/I$. In \cite{APCII} the authors introduce relative prismatization which associates to any (possibly derived) $p$-adic formal scheme $X/\oA$ a $(p,I)$-adic formal stack $(X/A)^\Prism$ with the property that $$\Gamma((X/A)^\Prism,\cO)\simeq \Prism_{X/A}$$ in wide enough generality.

In \cite[Theorem 7.17]{APCII} the authors prove the following theorem.

\begin{theorem}[Bhatt-Lurie]\label{eq: rel prism is de rham affine}
    When $X/\oA$ is affine, then there is an equivalence of $(p,I)$-adic formal stacks over $A$
$$
    (X/A)^\Prism\simeq \Maps_{\dalg^{(p,I)\text{-comp}}_{A}}(\Prism_{X/A}, -).$$

\end{theorem}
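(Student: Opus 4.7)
My plan is to construct a natural comparison map, reduce to polynomial $\oA$-algebras via a descent argument, and verify the equivalence directly in the polynomial case. The map itself comes from the identification $\Gamma((X/A)^\Prism, \cO) \simeq \Prism_{X/A}$: pullback of global sections along any $B$-valued point $\spec(B) \to (X/A)^\Prism$ yields a map of $(p,I)$-adically complete $A$-algebras $\Prism_{X/A} \to B$, assembling into the natural transformation
$$(X/A)^\Prism \longrightarrow \Maps_{\dalg^{(p,I)\text{-comp}}_{A}}(\Prism_{X/A}, -).$$

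For the descent step I would exploit that both sides, viewed as contravariant functors $R \mapsto F(R)$ from animated $\oA$-algebras to $(p,I)$-adic formal stacks over $A$, convert sifted colimits in $R$ into limits of stacks. On the right, this is formal: $R \mapsto \Prism_{R/A}$ is the non-abelian derived extension of its restriction to finitely generated polynomial $\oA$-algebras and hence commutes with sifted colimits, while $\Maps(-, B)$ turns these into limits. On the left, the analogous preservation property must be extracted from the construction of the relative prismatization in \cite{APCII}. Granted both, it suffices to verify the equivalence when $R$ is a polynomial $\oA$-algebra, and compatibility with finite products further reduces the problem to the single case $R = \oA[x]$.

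In that case, $\Prism_{\oA[x]/A}$ is the $(p,I)$-completion of the free $\delta$-$A$-algebra $A\{x\}$, so the right-hand side represents, on a $(p,I)$-complete $A$-algebra $B$, the space of $A$-algebra maps $A\{x\} \to B$. On the other hand $(\A^1_{\oA}/A)^\Prism$ admits a matching description via the ring-stack/transmutation viewpoint that the present note abstracts, realising it as the $(p,I)$-adic formal spectrum of the same free $\delta$-algebra. Matching the two presentations shows the comparison map is an equivalence.

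The principal obstacle is the sifted-colimit-preservation property on the left-hand side: this must be unpacked directly from the definition of the relative prismatization rather than deduced formally. Once available, the remainder is a reduction to polynomial algebras plus a concrete calculation — precisely the pattern that the paper axiomatises under the heading of \emph{de Rham affineness}, and what allows the theorem to be generalised, as the paper does, to the Nygaard filtered prismatization.
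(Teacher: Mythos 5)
Your skeleton --- build a natural comparison map, then reduce by colimit-preservation and K\"unneth to the single case $\A^1_{\oA}$ --- is exactly the strategy the paper attributes to Bhatt--Lurie (see Remark \ref{remark: de Rham affineness justification}), but the endgame is where the proof actually lives, and your version of it fails. First, $\Prism_{\oA[x]/A}$ is \emph{not} the $(p,I)$-completion of the free $\delta$-$A$-algebra $A\{x\}$: the latter is a discrete polynomial ring on countably many generators, whereas the Hodge--Tate comparison gives $\HTp_{\oA[x]/A}$ a two-step conjugate filtration with graded pieces $\oA[x]$ and $\Omega^1_{\oA[x]/\oA}[-1]\{-1\}$, so $\HTp_{\oA[x]/A}$ has nonvanishing $H^1$ and $\Prism_{\oA[x]/A}$ is not even discrete. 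Correspondingly, $(\A^1_{\oA}/A)^\Prism$ is not the formal spectrum of a free $\delta$-algebra: by transmutation it is the value of the quotient ring stack $\overline{W}$ (a cone on the Cartier--Witt divisor inside $W$), which is a genuinely non-affine, gerbe-like stack --- in characteristic $p$ it restricts to $W/p$, and $R^{\HT}\to\spec(R)$ is a gerbe banded by $\mathbf{V}(T_{R/k})^\sharp$ (Construction \ref{const HT}). So there are no two presentations of formal spectra of the same free $\delta$-algebra to match, and the $\A^1$ case cannot be closed by the computation you propose. (Also, the step you single out as the principal obstacle, colimit-preservation of $(-/A)^\Prism$, is in fact the easy part: it is immediate from the transmutation formula $\Maps_{\oA}(-,\overline{W}(-))$.)

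What actually closes the $\A^1$ case is descent along the ($p$-completed) perfectoid cover $\oA[x]\to\oA[x^{1/p^\infty}]$: every term of its \v{C}ech conerve is quasiregular semiperfectoid, where $\Prism$ is discrete and the prismatization is honestly the affine formal scheme $\spf(\Prism)$, so the comparison map is an equivalence degreewise; both functors carry the cover to an fpqc epimorphism; and the required epimorphism property on the $\uspf$ side reduces to the $(p,I)$-complete faithful flatness of $\Prism_{\oA[x]/A}\to\Prism_{\oA[x^{1/p^\infty}]/A}$. That flatness is proved by exhibiting an exhaustive increasing filtration of the target with free graded pieces, using the splitting of the Hodge--Tate gerbe by the evident Frobenius lifts and a semi-free resolution over the exterior algebra, as in \cite[Lemma 8.6]{BS19} and Lemmas \ref{lem: pcomp algebra over prism is pcomp} and \ref{lem: filtrations on de rham complex} of this note. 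This descent-plus-flatness mechanism is the missing content of your proposal.
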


Here $\dalg_A$ denotes the $\icat$ of Bhatt-Mathew and Raksit's derived $A$-algebras \cite[\S~4]{Rak20} and $\dalg_A^{(p,I)}$ is the localizing subcategory of $\dalg_A$ consisting of $(p,I)$-complete derived $A$-algebras. The latter category satisfies several pleasing properties and is studied reasonably carefully in \cite[\S~2]{dalggauges}.

We next record an example of the application of the equivalence of Theorem \ref{eq: rel prism is de rham affine}.

\begin{example}
    The authors of \cite{APCII} use the equivalence of Theorem \ref{eq: rel prism is de rham affine} to prove several structural facts about the relative prismatization; showing, for example, that under mild finiteness conditions on the affine derived $p$-adic formal $\oA$-scheme $R$, the stack $(X/A)^\Prism$ is classical \cite[Proposition 7.22 ]{APCII}.
    \end{example}

\begin{remark}
    The reason Mondal's result does not need derived algebras is because he works at the level of classical stacks, implicitly using the fact that the de Rham stack for $\GG_a$ is known to be classical\footnote{This can be deduced independently of the result itself. For example see the argument here \url{https://mathoverflow.net/a/501552}.}. Our viewpoint, for the sake of this note, is that the true objects are \emph{derived}. See also \cite[\S~1.4.7]{Drin20}.
\end{remark}

\begin{remark}[The property of de Rham affineness]\label{remark: de Rham affineness justification}
    The property in Theorem \ref{eq: rel prism is de rham affine} implies that mod any power of $(p,I)$, the stack $(X/A)^\Prism$ is affine in the sense of To\"en \cite{Toen2006ChampsAffines} and Mathew-Mondal \cite{MM24}. On the other hand the isomorphism of \ref{eq: rel prism is de rham affine} is somewhat stronger than \emph{just being affine}. The latter is an abstract property \cite[Definition 1.1]{MM24} while the proof of \cite[Theorem 7.17]{APCII} proceeds by building a map between both sides functorially in $X$ and, by examining the properties of the functor $(-/A)^\Prism$ and $\Prism_{-/A}$, checks that the isomorphism can be reduced to the case of $\A^1$ wherein it can be checked by a descent property of $\Prism_{-/A}$. So the equivalence of Theorem \ref{eq: rel prism is de rham affine} is essentially a property intrinsic to the functor of relative prismatization. Therefore to distinguish it from the abstract property we advocate calling it \emph{de Rham affineness} at least for the sake of this note. In Definition \ref{def: de rham affine} we abstract this definition for our later use. 
    
    We refer the reader to our forthcoming work explained in \S~\ref{forthcoming} for the utility of this definition.
\end{remark}

\begin{definition}[De Rham affineness]\label{def: de rham affine}
   Let $A$ be an animated ring and $\aring_A$ be the $\icat$ of animated $A$-algebras. Let $S$ be a stack with an affine diagonal, $\stacks_S$ the $\icat$ of stacks over $S$ and $\dalg(S)$ be the $\icat$ of derived algebras in quasi-coherent sheaves on $S$. Assume that we are given a functor $$\frak{X}\colon \aring_A\to \stacks_S.$$ We will say that $\frak{X}$ is \emph{de Rham affine relative to $S$} if there is a functor $$\mathcal{T}\colon \aring_A\to \dalg(S)$$ and a natural in $B\in \aring_A$ isomorphism of $S$-stacks 
   $$\frak{X}(B)\simeq \uspec_{S}(\cT(B))$$ where $\uspec_S(-)$ denotes the relative spectrum of $\cT(A)$ i.e. the functor corepresented by $\cT(A)$  over $S.$ 
   In particular there is an equivalence of $\stacks_S$-valued functors 
   $$\mathfrak{X}(-)\simeq \uspec_S(\cT(-)).$$
\end{definition}

\begin{remark}[Colimit preserving de Rham affine functors arise via transmutation]\label{rem: colim preserving de rham affine}
    The notion of de Rham affineness of a stack valued functor is treated as an organizing principle in our forthcoming work. However, we remark that the definition seems robust. A consequence is the following. In the notation of Definition \ref{def: de rham affine} if $\cT$ is a colimit preserving functor then $\uspec_S(\cT(-))$ arises via \emph{transmutation.} This offers one, perhaps anachronistic, reason why the functors of prismatization arise via ring stacks.
    See Corollary \ref{cor: colimit preserve cohomology arises from transmutation}.
\end{remark}

\begin{remark}[To\"en's affineness]
   The property of de Rham affineness implies T\"oen and Matthew-Mondal's notion of affineness. See \cite{Toen2006ChampsAffines} and \cite{MM24}. 
\end{remark}

\begin{example}\label{eg: de rham stack is de rham affine}
    Mondal's result in \cite{reconstruction_mondal} implies that the functor which sends any $k$-algebra $R$ to $(R/k)^\dR$ is de Rham affine over $\spec(k)$. The functor $\cT$ sends $R\mapsto \dR_{R/k}.$ 
\end{example}

\begin{example}\label{eg: rel prismatization is de rham affine}
    The result of Theorem \ref{eq: rel prism is de rham affine} implies that the relative prismatization is de Rham affine over $\spf(W(k))$. The functor $\cT$ sends $R\mapsto \Prism_{R/A}$ where the target only depends on the $p$-completion of $R.$
\end{example}

\begin{remark}
    The notion of de Rham affineness as abstracted in Definition \ref{def: de rham affine}, is implicit in the proof of \cite[Theorem 7.17]{APCII}. Therefore we do not take any credit for its discovery. We also hasten to add that Remark \ref{rem: colim preserving de rham affine} is inspired by Mondal \cite{reconstruction_mondal}.
\end{remark}

In \cite{Drin20} and \cite{Bhatt22}, for any animated $p$-complete $\oA$-algebra $R$, certain refinements of the relative prismatization functors $(R/A)^\Prism$ are introduced in the case where $(A,I)$ are \emph{perfect prisms} i.e. $\oA$ is an integral perfectoid ring. 

These associate to any animated $p$-complete $\oA$-algebra $R$, a $(p,I)$-formal stack $(R/A)^\nyg$ with the additional property that the structure morphism $(R/A)^\nyg\to \spf(A)$ factors over $\spf(A)\times \agm$, thus giving $(R/A)^\nyg$ the structure of a $(p,I)$-formal \emph{filtered} stack. 

More precisely, the relative pushforward of the structure sheaf of $(R/A)^\nyg$ along the structure morphism to $\spf(A)\times \agm$ is naturally a filtered $(p,I)$-complete derived algebra under the identification\footnote{For all such folklore identifications, we refer the reader to \cite[\S\S~2-4]{dalggauges}} $$\qcoh(\spf(A)\times \agm)\simeq \fil\Mod_A^{(p,I)\text{-comp}}.$$

The filtration on the pushforward, in many cases, captures the \emph{Nygaard filtration} on relative prismatic cohomology \cite[\S~5.1]{APC}. This filtration can be thought of as a $p$-adic analogue of the Hodge filtration in de Rham cohomology.

In fact, the structure morphism $(R/A)^\nyg\to \spf(A)\times \agm$ will also factor through the stack $(\oA/A)^\nyg$ by functoriality of the algebra morphism $\oA\to R$. For intuition, in characteristic $p$, the stack $(\oA/A)^\nyg$ is just $k^\nyg$. 

This factorization then captures not just the Nygaard filtration, but also the \emph{specializations} of the Nygaard filtration; for example the Hodge filtration on the derived de Rham cohomology of $R$ relative to $\oA$ and the conjugate filtration on the Hodge-Tate complex $\HTp_{R/A}.$

In this note we specialize to the case when the prism $(A,I)=(W(k),(p))$ and study the relative derived stack $R^\nyg\to k^\nyg$ for any animated $k$-algebra $R$ with the aim of proving an analog of Theorem \ref{eq: rel prism is de rham affine} in this setting. We refer the reader to the forthcoming Remark \ref{rem: remains true in general} on the \emph{mutatis mutandis} generalization of this result to mixed characteristic pursued in \cite{Sah25Nyg} and to \S~\ref{forthcoming} for an explanation of the context .



\subsection{Main result}\label{sec: results intro}

Our main result is to show that the functor which sends an animated $k$-algebra $R$ to $R^\nyg$ is de Rham affine relative to $k^\nyg.$

\begin{notation} We recall some notation in order to formulate our theorem. We will elaborate more in  \S~\ref{sec: rees stack}.\label{not: notation intro}
\begin{enumerate}
    \item  Following \cite[\S~5.1]{APC}, there is a relative Nygaard filtration on $$F^*\Prism_{X/W(k)}:=\Prism_{X/k}\widehat{\otimes}_{W(k),\varphi} W(k)$$ where $\varphi\colon W(k)\to W(k)$ is the Frobenius on Witt vectors. We denote the filtered object as $\fil_\nyg F^*\Prism_{X/W(k)}.$ 
    \item $\fil_\nyg F^*\Prism_{X/W(k)}$ is a $p$-complete  derived $\fil_{(p)}W(k)$-algebra , where  $\fil_{(p)}W(k)$ is the $p$-adic filtration on $W(k).$

    \item Recall by \cite[\S~3.3]{Bhatt22} that $k^\nyg\simeq \spf W(k)\langle u,t\rangle/(ut-p)/\GG_m$ with $t$ in degree $1$ and $u$ in degree $-1.$
    \item   By taking the Rees construction of the filtered derived algebra of item $(2)$ get a graded derived algebra $\rees(\fil_\nyg F^*\Prism_{X/W(k)})\in \dalg(k^\nyg)$ where the right hand side is defined by right Kan extensions. 
    \item Given any filtered derived algebra $A\in \dalg(k^\nyg)$ we can define, following \cite[Construction 1.2]{MM24} $$\uspf_{k^\nyg}A:=\Maps_{\dalg(k^\nyg)}(A,-)$$ as a functor on $\dalg^\cn(k^\nyg)$, the category of derived algebras on $k^\nyg$ whose underlying modules are connective for the standard $t$-structure on $k^\nyg.$  
    \item Since $k^\nyg$ has an affine diagonal, it follows that $\uspf_{k^\nyg}A$ defines a functor on derived schemes with a map to $k^\nyg.$ 
    \item When we apply the operation of item $(5)$ to the algebra of item $(4)$ we denote the result by $\cR_\nyg(F^*\Prism_{X/k}).$
\end{enumerate}

\end{notation}

We may now state our main (and only) theorem.

\begin{theorem}\label{thm: intro Main theorem}
    Let $X$ be an affine derived scheme over $k$ then there is a natural in $X$ equivalence of derived stacks over $k^\nyg$ $$X^\nyg\simeq \cR_{\nyg}(F^*\Prism_{X/k}).$$

    In particular there is an isomorphism of $\stacks_{k^\nyg}$-valued functors on derived affine $k$-schemes $$(-)^\nyg\simeq \cR_\nyg(F^*\Prism_{-/k}) $$
\end{theorem}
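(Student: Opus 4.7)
The plan mirrors the approach of \cite[Theorem 7.17]{APCII}, now adapted to the Nygaard-filtered setting over $k^\nyg$.

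Step 1 (The natural map). I first construct a natural transformation $\eta_R \colon R^\nyg \to \cR_\nyg(F^*\Prism_{R/k})$ of $k^\nyg$-stacks, natural in $R \in \aring_k$. By the very construction of $R^\nyg$, the structural morphism $R^\nyg \to k^\nyg$ is formally affine and the pushforward of the structure sheaf is canonically identified, functorially in $R$, with $\rees(\fil_\nyg F^*\Prism_{R/W(k)}) \in \dalg(k^\nyg)$ via the Rees equivalence between filtered derived algebras over the $p$-adic filtration of $W(k)$ and graded derived algebras on $k^\nyg$. The universal property of $\uspf_{k^\nyg}$ from Notation \ref{not: notation intro}(5) then produces $\eta_R$.

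Step 2 (Reduction to polynomial $k$-algebras). Both sides of $\eta_R$, viewed as contravariant functors $\aring_k^\opp \to \stacks_{k^\nyg}$, send sifted colimits of $R$ to sifted limits of $k^\nyg$-stacks. On the right, this is because $\cT(R) := \rees(\fil_\nyg F^*\Prism_{R/W(k)})$ preserves sifted colimits in $R$ (derived prismatic cohomology with its Nygaard filtration being a left Kan extension from smooth $k$-algebras) and $\uspf_{k^\nyg}$ converts colimits of derived algebras into limits of stacks. On the left, this is part of the construction of the Nygaard filtered prismatization functor. Since polynomial $k$-algebras generate $\aring_k$ under sifted colimits, it suffices to verify $\eta_R$ is an equivalence for $R = k[x_1,\ldots,x_n]$.

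Step 3 (Reduction to $R = k[x]$ and explicit computation). Both functors carry tensor products $R\tensor_k R'$ to fibre products $R^\nyg \times_{k^\nyg} R'^\nyg$ -- on the prismatic side via the K\"unneth formula for relative Nygaard filtered prismatic cohomology, and on the prismatization side by the pushout-to-pullback behaviour of relative prismatization. This reduces the verification to the single free generator case $R = k[x]$. Here $F^*\Prism_{k[x]/W(k)}$ with its Nygaard filtration admits a closed-form description in terms of divided-power envelopes of $W(k)[x]$, whose Rees algebra over $\spf W(k)\langle u,t\rangle/(ut-p)$ can be written out explicitly. Dually, the functor of points of $k[x]^\nyg$ over $k^\nyg$ admits a parallel explicit description in terms of $\GG_m$-equivariant Nygaard-gauged coordinates on the formal affine line. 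A direct comparison then verifies that $\eta_{k[x]}$ is an equivalence.

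The main technical obstacle is Step 3: carefully tracking the Nygaard filtration through the Frobenius twist and the Rees construction, and matching the two explicit descriptions in the polynomial case. The rest of the argument is a formal deduction using standard sifted-colimit and monoidal properties of relative prismatic cohomology; this is precisely where the axiomatic notion of de Rham affineness (Definition \ref{def: de rham affine}) does its organizing work, automatically extending the polynomial check to all of $\aring_k$.
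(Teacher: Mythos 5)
Your Steps 1 and 2 match the paper's strategy: the map $\eta_R$ is built from the adjunction for stacks over $k^\nyg$ and left Kan extension from polynomial algebras (Construction \ref{cons: nyg can map}), and colimit-preservation plus K\"unneth reduce the comparison to $R=k[x]$ (Remark \ref{remark: testing on A[x]}). One caveat on Step 1: the identification $f^\nyg_*\cO_{R^\nyg}\simeq \rees(\fil_\nyg F^*\Prism_{R/k})$ does \emph{not} hold functorially for all animated $R$ --- the pushforward is a limit and does not commute with colimits in $R$ --- so it can only be invoked on (smooth) polynomial algebras, with the map for general $R$ obtained afterwards by Kan extension; as stated, your Step 1 asserts more than is true.

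The genuine gap is Step 3. The stack $k[x]^\nyg$ is defined by transmutation through the ring stack $\GG_a^\nyg=\Cone(M_u\to W)$, so it is a gerbe-like, non-affine object over $k^\nyg$, while $\cR_\nyg(F^*\Prism_{k[x]/k})$ is the relative spectrum of a coconnective derived algebra. There is no feasible way to ``write out both sides explicitly and compare'': the assertion that they agree for $k[x]$ \emph{is} the affineness theorem, and matching a closed-form description of the filtered algebra against the functor of points of the transmutation stack is exactly what cannot be done directly. The paper instead argues by descent along the quasisyntomic cover $A=k[x]\to S=k[x^{1/p^\infty}]$: every term of the \v{C}ech conerve is quasiregular semiperfect, where both sides become classical quotient stacks $\spf(T)/\GG_m$ of a discrete graded ring and the comparison is Bhatt--Lurie's Theorem \ref{thm: eta is isomorphism when qrsp}. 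To descend one needs both augmentations to be fpqc epimorphisms (Lemma \ref{lem: checking on a single cover}); for $(-)^\nyg$ this is known, but for the Rees-stack side it is the real technical content of the proof: one must show that $\rees(\fil_\nyg F^*\Prism_{A/k})\to\rees(\fil_\nyg F^*\Prism_{S/k})$ becomes connective and $p$-completely faithfully flat after base change to any connective graded algebra. The paper establishes this via a bar-resolution filtration for connectivity, a constructible-stratification criterion on $k^\nyg$ reducing faithful flatness to the loci $k^\Prism$, $k^\HT$ and $k^\hodge$, and semi-free resolutions over the de Rham complex in the style of \cite[Lemma 8.6]{BS19}. None of this appears in your outline, and without it the reduction to $k[x]$ does not close.
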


As a corollary we obtain similar isomorphisms for those stacks whose information is contained within $X^\nyg$ viewed as a stack over $k^\nyg$. We mention two which highlight the utility of Theorem \ref{thm: intro Main theorem} and may be of independent interest. We refer the reader to Corollary \ref{cor: substacks iso} for the full list.

\begin{corollary}\label{cor: hodedr isaffine}
    Let $X$ be an affine derived scheme over $k$. Then let $(X/k)^{\dR,+}$ be the Hodge filtered derived de Rham stack of $X$ and  let $\uspec_{\agm}(\rees(\fil_{\hodge} \dR_{X/k}))$ be the spectrum of the graded algebra obtained by applying the Rees construction to $\fil_{\hodge} \dR_{X/k}$ the Hodge filtered derived de Rham cohomology of $X$. 
    Then there is a canonical isomorphism of stacks over $\agm$

    $$(X/k)^{\dR,+}\simeq \uspec_{\agm}(\rees(\fil_{\hodge} \dR_{X/k})) $$
\end{corollary}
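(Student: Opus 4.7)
The plan is to deduce this as a base-change specialization of Theorem \ref{thm: intro Main theorem}. Using the presentation $k^\nyg \simeq \spf W(k)\langle u,t\rangle/(ut-p)/\GG_m$, the closed immersion $\iota\colon \agm \hookrightarrow k^\nyg$ cut out by $u=0$ carves out the Hodge locus of $k^\nyg$. I would first verify that pulling back $X^\nyg$ along $\iota$ recovers precisely the Hodge filtered derived de Rham stack $(X/k)^{\dR,+}$. This is a standard Hodge specialization of the Nygaard filtered prismatization, already alluded to in \S~\ref{sec: background}: at $u=0$ the Nygaard filtration on the Frobenius-twisted relative prismatic cohomology degenerates to the Hodge filtration on derived de Rham cohomology.

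Second, I would upgrade the classical identification $F^*\Prism_{X/W(k)} \otimes^\L_{W(k)} k \simeq \dR_{X/k}$ to a filtered comparison $\iota^* \fil_\nyg F^*\Prism_{X/W(k)} \simeq \fil_\hodge \dR_{X/k}$, viewed as an equivalence of filtered derived $k$-algebras, equivalently of derived algebras on $\agm$ under the identification $\qcoh(\agm) \simeq \fil\Mod_k$.

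With these two ingredients, the corollary follows formally. Theorem \ref{thm: intro Main theorem} rewrites $X^\nyg \simeq \uspec_{k^\nyg}(\rees(\fil_\nyg F^*\Prism_{X/W(k)}))$ over $k^\nyg$; base changing along $\iota$, and using that relative $\uspec$ commutes with base change together with the fact that the Rees construction commutes with pullback of filtered derived algebras, one obtains
\[(X/k)^{\dR,+} \simeq \uspec_{\agm}\bigl(\iota^*\rees(\fil_\nyg F^*\Prism_{X/W(k)})\bigr) \simeq \uspec_{\agm}(\rees(\fil_\hodge \dR_{X/k})),\]
which is the desired isomorphism.

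The main obstacle is the filtered comparison in the second step: the underlying algebra comparison is classical, but tracking the Nygaard filtration under the specialization $u=0$ in the animated setting requires some care. This is, however, essentially contained in the foundational references cited in \S~\ref{sec: background}, and indeed the same base-change principle will yield the remaining isomorphisms collected in Corollary~\ref{cor: substacks iso}.
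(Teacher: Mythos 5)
Your proposal is correct and follows the paper's own route: the paper obtains this corollary (as item (3) of Corollary \ref{cor: substacks iso}) by restricting the natural isomorphism $\eta_R$ of Theorem \ref{thm: main thm} to the locus $k^\nyg_{u=0}=\agm=k^{\dR,+}$, with the two ingredients you identify supplied by Construction \ref{cons: khodge} (identifying $X^\nyg|_{u=0}$ with $(X/k)^{\dR,+}$) and Lemma \ref{lem: functorialities for rees cons}(3) together with Lemma 4.6 (identifying the base change of the Rees algebra with $\rees(\fil_\hodge\dR_{X/k})$ via the filtered de Rham comparison of \cite[Corollary 5.2.8]{APC}). The filtered comparison you flag as the main obstacle is exactly the point the paper addresses by noting $u=pt^{-1}$ and computing $\cofib(\fil^{i-1}_\nyg\xrightarrow{p}\fil^i_\nyg)\simeq\fil^i_\hodge\dR_{X/k}$.
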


\begin{remark}
    Corollary \ref{cor: hodedr isaffine} is a filtered variant of Mondal's result in \cite{reconstruction_mondal}. We actually do not know how to prove \ref{cor: hodedr isaffine} directly via identification of the ring stack as in \emph{loc.cit.} It would be interesting to give a direct proof of this fact in the spirit of Tannaka duality. See the forthcoming Corollary \ref{cor: colimit preserve cohomology arises from transmutation} and Question \ref{remark: how to identify ring stack} for context.
\end{remark}

\begin{corollary}\label{cor: filconj is affine}
Let $X$ be an affine derived scheme over $k$ and let $(X/k)^{\HT,\conj}$ be the conjugate filtered Hodge-Tate stack of $X$ relative to $k$. Let $\uspec_{\agm}(\rees(\fil_\conj \HTp_{X/k}))$ be the spectrum of the Rees construction of the conjugate filtered Hodge-Tate cohomology $\fil_\conj \HTp_{X/k}$ of $X$ relative to $k.$ Then there is an equivalence of stacks over $\agm$
$$(X/k)^{\HT,\conj}\simeq  \uspec_{\agm}(\rees(\fil_\conj \HTp_{X/k})).$$
    
\end{corollary}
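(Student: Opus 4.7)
The plan is to deduce Corollary~\ref{cor: filconj is affine} as a direct base-change of Theorem~\ref{thm: intro Main theorem} along the natural embedding of the conjugate filtered Hodge-Tate locus inside $k^\nyg$. The construction of the Nygaard filtered prismatization is set up so that $(X/k)^{\HT,\conj}$ arises functorially as a fiber product of $X^\nyg$ with the distinguished substack $(k/k)^{\HT,\conj} \subset k^\nyg$. Concretely, using the description $k^\nyg \simeq \spf W(k)\langle u,t\rangle/(ut-p)/\GG_m$ with $|t|=1$ and $|u|=-1$, this substack is cut out by $u=0$ (equivalently, the locus where $p=0$ and the ``$t$-branch'' is chosen) and identifies with $\agm$. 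Functoriality of the Nygaard filtered prismatization in the algebra then yields a cartesian square
\[
\begin{tikzcd}
(X/k)^{\HT,\conj} \arrow[r] \arrow[d] & X^\nyg \arrow[d] \\
\agm \arrow[r, hook] & k^\nyg.
\end{tikzcd}
\]

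Next, apply Theorem~\ref{thm: intro Main theorem} to identify $X^\nyg$ with $\uspec_{k^\nyg}(\rees(\fil_\nyg F^*\Prism_{X/k}))$. Since $\uspec_S(-)$ is by definition corepresentable, it commutes with base change in $S$; pulling back the cartesian square then gives
\[
(X/k)^{\HT,\conj}\;\simeq\;\uspec_{\agm}\bigl(\rees(\fil_\nyg F^*\Prism_{X/k}) \otimes^{L}_{\cO_{k^\nyg}} \cO_{\agm}\bigr).
\]
The remaining task is to identify the right-hand algebra with $\rees(\fil_\conj \HTp_{X/k})$. Under the Rees--filtered dictionary identifying $\Qcoh(\agm)$ with filtered modules, restriction along $\agm \hookrightarrow k^\nyg$ corresponds to the specialization of the Nygaard filtration on the Frobenius twist of prismatic cohomology to the conjugate filtration on Hodge-Tate cohomology, which is part of the basic design of the Nygaard filtration as recorded in \cite[\S5.1]{APC}.

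The main obstacle is the last step: one must verify that the derived pullback of the \emph{graded} algebra $\rees(\fil_\nyg F^*\Prism_{X/k})$ along the closed immersion $\agm \hookrightarrow k^\nyg$ recovers, on the nose, the Rees algebra of the conjugate filtration on Hodge-Tate cohomology, rather than merely the associated graded or some shifted variant. This is a compatibility between two base-change operations, one on the stack side and one on the filtered module side; once it is established, the corollary becomes a formal consequence of Theorem~\ref{thm: intro Main theorem} together with the compatibility of $\uspec$ with base change.
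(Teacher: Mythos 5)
Your overall strategy --- restrict the isomorphism of Theorem \ref{thm: intro Main theorem} along the inclusion of the relevant locus of $k^\nyg$, use that $\uspec$ commutes with base change, and identify the base-changed Rees algebra --- is exactly the route the paper takes (via Lemma \ref{lem: functorialities for rees cons} and Corollary \ref{cor: substacks iso}). But there is a concrete error in the execution: you identify the conjugate filtered Hodge--Tate locus as the substack cut out by $u=0$. In the paper's conventions (Constructions \ref{cons: kconj} and \ref{cons: khodge}), $k^{\HT,\conj}$ is the locus $t=0$ (leaving the variable $u$, i.e.\ base change along $\wkut\to k[u]$), whereas $u=0$ is the locus $k^{\dR,+}$ parametrising the \emph{Hodge filtered de Rham} stack. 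If you carry out your cartesian square with the $u=0$ inclusion, you will prove Corollary \ref{cor: hodedr isaffine} rather than Corollary \ref{cor: filconj is affine}. The two loci are genuinely different: setting $u=0$ replaces the coefficient of $t^{-i}$ in $\rees(\fil_\nyg F^*\Prism_{X/k})$ by $\cofib(\fil^{i-1}_\nyg\xrightarrow{p}\fil^i_\nyg)\simeq\fil^i_\hodge\dR_{X/k}$, while setting $t=0$ produces the graded pieces $\gr^i_\nyg F^*\Prism_{X/k}$.

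Separately, the step you flag as ``the main obstacle'' --- identifying the derived pullback of the graded algebra $\rees(\fil_\nyg F^*\Prism_{X/k})$ along the correct closed immersion with $\rees(\fil_\conj\HTp_{X/k})$ --- is left unresolved in your proposal, and it is precisely the content of Lemma \ref{lem: functorialities for rees cons}(4). The paper closes it by the standard comparison $\gr^n_\nyg F^*\Prism_{X/k}\simeq\fil^n_\conj\HTp_{X/k}$ of \cite[Remark 5.1.2]{APC}, together with the observation that the conjugate filtration is increasing so the signs of the exponents of $u$ and of the filtration index agree. With the locus corrected to $t=0$ and this identification supplied, your argument matches the paper's.
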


\begin{remark}
    After Frobenius pullback Corollary \ref{cor: filconj is affine} implies a similar result for the conjugate filtration on the de Rham cohomology of $X$ relative to $k.$ We leave this to the interested reader to explicate.
\end{remark}

\begin{remark}
    With Corollary \ref{cor: filconj is affine}, we again do not know a direct proof. It would be interesting to find one using Tannaka duality.
\end{remark}

\begin{remark}\label{rem: remains true in general}
    The theorem remains true \emph{mutatis mutandis} when one works with relative Nygaard filtered prismatization which is the subject of \cite{Sah25Nyg}, as the latter hasn't been studied in detail before (but see \cite{LahotiManam2025CohomologyRingStacks} and \cite{pentland2025syntomification} for some definitions). Note that the proof can be easily made to work relative to perfect prisms, but we defer that to \cite{Sah25Nyg}.
\end{remark}

\begin{remark}[Overview of the proof]\label{remark: overview proof}
The proof proceeds by building a natural map and then by reduction the case of $k[x]$ for both the functors in Theorem \ref{thm: intro Main theorem}. Set $A=k[x]$ and $S=k[x^{1/p^\infty}]$. Using the criteria abstracted in Lemma \ref{lem: checking on a single cover}, it will suffice to show that the natural map $\cR_\nyg(F^*\Prism_{S/k}))\to \cR_\nyg(F^*\Prism_{A/k}))$ is an fpqc epimorphism of sheaves of spaces. This can be obtained from a more concrete flatness statement about the morphism of derived algebras $\fil_\nyg F^*\Prism_{A/k}\to \fil_\nyg F^*\Prism_{S/k}$, which is obtained via the Rees construction and a constructible stratification flatness argument as explained in Appendix \ref{appendix on testing flatness}. The main difficulty then is in setting up a theory of derived algebras on non-trivially formal derived stacks which was accomplished in \cite{dalggauges}. The remaining effort in this paper is explaining the construction of the stack $\rnyg{R}$ for an animated $k$-algebra $R.$
\end{remark}

\begin{remark}
    In \cite[Lemma 3.3.14.]{Holeman-Derived-delta-Rings}, Holeman gives another proof of \cite[Theorem 7.17]{APCII} establishing the de Rham affineness of prismatic cohomology, by using the universal property of derived prismatic cohomology established in his work. We expect that something similar will give a more conceptual proof of Theorem \ref{thm: intro Main theorem}, but we make no attempt to do so.
\end{remark}

\begin{remark}
    In the notation of Remark \ref{remark: overview proof}, it is not too hard to prove that the natural map $\fil_\nyg F^*\Prism_{A/k}\to \fil_\nyg F^*\Prism_{S/k}$ is descendable in the sense of Mathew \cite{galoisgroup}\footnote{We sketch a proof. It suffices to show that the induced map of graded algebras $\rees(\fil_\nyg F^*\Prism_{A/k})\to \rees(\fil_\nyg F^*\Prism_{S/k})$ is descendable of index $\leq 2$. Set $P=\rees(\fil_\nyg F^*\Prism_{A/k})$ and $Q=\rees(\fil_\nyg F^*\Prism_{S/k}).$ Let $F=\fib(P\to Q)$. Then it suffices to show that $F^{\otimes_P 2}\to P$ is nullhomotopic. Since the target is $t$-complete, and $t$-completion is exact, we may pass to $t$-completions. Note that this doesn't change $Q\qq t$ by \cite[Lemma 2.1]{GwilliamPavlov2018EnhancingFilteredDerived}. Therefore we may pass mod $t$ and show that the induced map $F^{\otimes_P 2}\qq t \to P \qq t$ is nullhomotopic. But by the completeness of the conjugate filtration on $\dR_{k[x]/k}$, we may apply the same trick and go mod $u$ where the technique of \cite[Lemma 8.6]{BS19} in the form of Lemma \ref{lem: filtrations on de rham complex} applies.}. Pending a graded analog of \cite[Proposition 3.5]{MM24}, it should imply the following result which is a direct analog of Bhatt's result in \cite[Corollary 4.21]{MM24}. Let $R$ be any animated $k$-algebra such that the $k$-module $\Omega^1_{\pi_0(R)/k}$ is finitely generated, then the natural map $$\Gr\Mod^{\pcomp}_{\rees(\fil_\nyg F^*\Prism_{R/k})}\to \qcoh(\rnyg{R})$$ is an equivalence. Since this will involve writing down a notion of \emph{graded} coconnective flatness in the sense of \emph{loc.cit.}, we do not pursue this here.
\end{remark}

\subsection{Overview of the Note.}\label{sec: overview}
In \S~\ref{sec: de Rham affine.}, we explain some basic generalities on the notion of de Rham affineness. In \S~\ref{sec: cons strat of knyg}, we recall some loci of $k^\nyg$ which are useful to state the corollaries of our theorem. In  \S~\ref{sec: rees stack} we construct the Rees stack $\rnyg{R}$ for any animated $k$-algebra $R$, study its restrictions to loci of $k^\nyg$ studied in  \S~\ref{sec: cons strat of knyg}, and construct a natural map exhibiting the isomorphism in Theorem \ref{thm: intro Main theorem}. 
Finally in  \S~\ref{sec: de Rham affine proof} we give the proof. 

The appendices fill in some details missing from the literature. In Appendix \ref{appendix on testing flatness} we explain a criterion for testing faithful flatness of connective modules on any constructible stratification of the derived affine scheme corresponding to an animated ring. In Appendix \ref{appendix: connectivity mod J}, we explain why connectivity of modules complete with respect to an ideal can be tested modulo that ideal. In Appendix \ref{Appendix: colim derived nyg}, we explain why the relative Nygaard filtered prismatic cohomology preserves colimits.

\subsection{Forthcoming work.}\label{forthcoming}

This note is second in a series of papers dedicated to studying the syntomification (in the sense of \cite{Bhatt22}) of Frobenius liftable schemes. The other papers being \cite{dalggauges}, \cite{Sah25Nyg} and \cite{Sah25Syn}.
We refer the reader to \cite[\S~5]{dalggauges} for more context, but we recall here the main context of Theorem \ref{thm: intro Main theorem}. In  \cite{Sah25Nyg} we will show that if $(\frak{S},I)$ is a Breuil-Kisin (or more generally any bounded orientable prism) then the relative Nygaard filtered prismatization $(-/\frak{S})^\nyg$ is de Rham affine in an appropriate sense. In particular when $\frak{S}/I=W(k)$ then the functor sending derived formal $W(k)$-scheme $X\mapsto (X/W(k))^{\dR,+}$, the Hodge filtered de Rham stack, is de Rham affine relative to $\agm$ i.e. a mixed characteristic lift of the statement of Corollary \ref{cor: hodedr isaffine}. Using an observation of \cite{BhattMorrowScholze2019THHandPadicHodge}, this along with Theorem \ref{thm: main thm} will allow us to show in \cite{Sah25Syn} that if $X$ is a derived $\delta$-scheme over $W(k)$ and $X_{p=0}$ is the derived mod $p$-reduction, then there is a pullback diagram 
\[\begin{tikzcd}
	{(X_{p=0})^\nyg} & {(X/W(k))^{\dR,+}} \\
	{k^\nyg} & \agm
	\arrow[from=1-1, to=1-2]
	\arrow[from=1-1, to=2-1]
	\arrow[from=1-2, to=2-2]
	\arrow[from=2-1, to=2-2]
\end{tikzcd}\]
Using this, in the $p$-completely smooth case, we will describe $\qcoh((X_{p=0})^\syn)$ in terms of Fontaine-Laffaile modules on $X$ of Faltings \cite{faltings}.

\subsection{Conventions}\label{sec: conventions}
We use the language of $\infty$-categories and derived algebraic geometry from \cite{HTT}, \cite{HA} and \cite{Lur18}. As a consequence all functors and (co)limits are taken in the derived sense.

We work with animated rings and their module categories. We denote this category as $\aring$ and for $A\in \aring$ we denote animated $A$-algebras by $\aring_A:=\aring_{A/}$. If $A\in \aring$  then it has an underlying $E_\infty$-ring $A^\circ$ and a module category $\Mod_{A^\circ}:=\Mod_{A^\circ}(\Sp)$ where $\Sp$ is the $\icat$ of spectra. We define $\Mod_A$ as $\Mod_{A^\circ}.$ This is equipped with a standard $t$-structure obtained by pullback of the $t$-structure on $\Sp$ and we denote the connective part by $(\Mod_A)_{\geq 0}$ and the coconnective one by $(\Mod_A)_{\leq 0}$. 

We will have occasion to take both derived quotients and classical quotients and we differentiate between them as follows: if $A$ is an animated ring and $f_1,\ldots, f_r\in \pi_0(A)$ then $A\qq f_1,\ldots, f_r=A\otimes_{\ZZ[x_1\ldots, x_r]} \ZZ $ where the action on the left factor is given by $x_i\mapsto f_i$ and the right factor is given by $x_i\mapsto 0.$ When $A$ is a discrete ring, then the classical quotient in the same situation is denoted $A/(f_1,\ldots, f_r).$ A similar convention holds for modules. See \cite[\S~5]{CesnaviciusScholze2024FlatPurity} for more on this.

A stack is an accessible hypersheaf $X\colon (\aring)_{\fpqc}\to \calS $ where the target is the $\icat$ of spaces or $\infty$-groupoids or anima. We denote this category simply as $\stacks.$ Given a stack $X$, we will also be interested in the $\icat$ of stacks over $X$ which we denote by $\stacks_X:=(\stacks)_{/X}$. Note that this can equivalently be defined as flat accessible hypersheaves on $\daff_X$, the $\infty$-category of derived affine schemes with a map to $X$, where $\daff=\aring^{\op}$ by definition.

The accessibility condition implies that $\qcoh(X),$ defined by right Kan extensions from $\daff_X$, is presentable (for example by \cite[Proposition 3.2.2]{dag}). Similarly we will be interested in the $\icat$ $\dalg(X)$ of derived algebras in quasi-coherent sheaves on a stack $X$. These are defined again by right Kan from $\daff_X$ and are monadic over $\qcoh(X)$ and presentable (by the presentability of $\qcoh(X)$) although we never need the former fact. We will be interested in the classification results for $\dalg(k^\nyg), \dalg(\agm)$ and $\dalg(\bgm)$ as explained in \cite[\S~4]{dalggauges}.

$\qcoh(X)$ has a canonical $t$-structure where a modules $M\in\qcoh(X)$ is connective iff it is connective after pullback to $\spec(R)\to X$ for all $\spec(R)\in \daff_X$. The coconnective part is determined by orthogonality. This induces a notion of connectivity on $\dalg(X)$ wherein a derived algebra is connective if it is in $\qcoh(X).$ We denote the connective part by $\dalg^\cn(X).$

We will be interested in applying the adjoint functor theorem \cite[Corollary 5.5.2.9 ]{HTT} for the `large' category $\stacks_X$. For this, we adopt the easiest resolution which is to allow hypersheaves with values in large spaces $\widehat{\calS}$, defined as in \cite[Remark 1.2.16.4]{HTT}. Enlarging our universe then makes $\stacks_{X}$ an $\infty$-topos and hence presentable (in the larger universe) by \cite[Theorem 6.1.0.6.]{HTT}. Another approach via working with accessible presheaves on coaccessible categories is described in \cite[Appendix A]{HesselholtPstragowski-Dirac-geometry-II} used in \cite[Appendix D]{LahotiManam2025CohomologyRingStacks}, which we do not use here (due to our own ignorance).\footnote{Maxime Ramzi has informed the author that the application of the adjoint functor theorem does not require presentability of the target, but only the source. In particular, one does not need to enlarge the universe to apply the theorm for accessible hypersheaves. In interest of brevity, we still do so.}

A $p$-adic formal stack is a stack on $p$-nilpotent animated rings. Here, an animated ring $R$ is $p$-nilpotent if $p^N=0$ in $\pi_0(R)$ for some $N\gg0 .$ The condition of being a $p$-adic formal stack is equivalent to being a stack $X\in \stacks$ with the additional property that the structure morphism $X\to \spec(\ZZ)$ factors through $X\to \spf(\ZZ_p)\to \spec(\ZZ)$\footnote{In fact, after fixing set theoretic issues, there is an equivalence of $\infty$-topoi $\Shv((\aring)_\fpqc)_{/\spf(\ZZ_p)}\simeq\Shv((\nilp_p)_{\fpqc})$ with the functor given by the projection $\Shv((\aring)_\fpqc)_{/\spf(\ZZ_p)}\to \Shv((\aring)_\fpqc)$ composed with the restriction $\Shv((\aring)_\fpqc)\to \Shv((\nilp_p)_{\fpqc})$ where $\nilp_p$ is the $\icat$ of $p$-nilpotent animated rings}.

A ring stack is a stack which factors through the forgetful functor $\aring\to \calS.$ We will mostly be interested in those stacks which are $1$-truncated. We refer the reader to \cite{Drinfeld2021RingGroupoid} for a gentle introduction to this theory.

In the main body of the article we work with derived variants of prismatic cohomology and its concomitant structures like derived Hodge-Tate and Hodge cohomology. We drop the word `derived' as we never mean the classical notion. 

Lastly, we work with prismatic cohomology relative to a perfect ring $k$ of characteristic $p>0$. By the Bhatt-Lurie comparison \cite[Theorem 5.6.2]{APC} this is equivalent to absolute prismatic cohomology. However, since we do care about structure which needs a map from $k$, like the conjugate filtration on Hodge-Tate cohomology, we prefer to work with relative prismatic cohomology.

\subsection{Acknowledgements.}\label{sec: acknowledgements}
We thank Bhargav Bhatt, Jack J. Garzella,  Ryo Ishizuka, Keerthi Madapusi, Deven Manam, Maxime Ramzi, Longke Tang, Gleb Terentiuk and Nathan Wenger for helpful conversations or correspondences. 
In particular, Bhargav Bhatt suggested looking at the constructible stratification of $k^\nyg$, crucially suggested the bar complex for estimates on connectivity and gave us permission to name the property in Definition \ref{def: de rham affine}\footnote{All misinterpretations or unsuitability of terminology are solely the author's fault.}.
We are also extremely grateful to our advisor Kiran Kedlaya for encouragement, support and the freedom to pursue our interests without which this note would not exist in any form. The author was partially supported by NSF grant  DMS-$2401536$ and from S.E. Warschawski Professorship under Kiran Kedlaya.

\section{Generalities on de Rham affineness}\label{sec: de Rham affine.}

In this section we study some generalities on de Rham affineness as in Definition \ref{def: de rham affine} which will serve to axiomatize the notion and streamline the proof.  Our viewpoint on this property is that  it is a useful organizational tool for our work in the forthcoming \cite{Sah25Nyg} and \cite{Sah25Syn}. However we are able to show some structural properties like Corollary \ref{cor: colimit preserve cohomology arises from transmutation} which may be interesting in their own right.

We recall the definition for convenience.

\begin{definition}\label{def: 2de rham.}
    Let $S$ be a stack with an affine diagonal, let $\stacks_S$ be the $\icat$ of stacks over $S$ and let $\dalg(S)$ be the $\icat$ of derived algebras in quasi-coherent sheaves on $S.$ Let $A$ be an animated ring and $\aring_A$ the $\icat$ of animated $A$-algebras.
    Assume $\mathfrak{X}\colon \aring_A^{\op}\to \stacks_S$ is a functor. We say that $\mathfrak{X}$ is de Rham affine relative to $S$ if there is a functor $\cT\colon \aring_A\to \dalg(S)$ and an isomorphism $\stacks_S$-valued functors $$\mathfrak{X}(-)\simeq \uspec (\cT(-)).$$
\end{definition}

\begin{example}\label{eg: trivial example}
    For any $\dalg(S)$ valued functor $\cT\colon \aring_A\to \dalg(S)$ the functor $\uspec(\cT(-))$ defined as the composite 

    $$\aring_A^\op\xrightarrow{\cT} \dalg(S)^\op\xrightarrow{\uspec} \stacks_S$$ is de Rham affine by definition.
\end{example}

\begin{remark}
In Definition \ref{def: 2de rham.} one thinks of $\cT\colon \aring_A\to \dalg(S)$ as a cohomology theory one wants to study and $\mathfrak{X}$ as parametrising coefficients for the cohomology theory in the sense that for any animated $A$-algebra $R$ one considers $\Qcoh(\mathfrak{X}(R))$ as the category of coefficients for $\cT$-cohomology. 

It will happen under mild finiteness conditions on $R$ that $\cT(R)=f^{\mathfrak{X}(R)}_*\cO_{\mathfrak{X}(R)}$ as derived algebras, but not necessarily always. For example if $S=\spec(Q)$ for some discrete ring $Q$, then  $$f^{\mathfrak{X}(R)}_*\cO_{\mathfrak{X}(R)}=\Gamma(\mathfrak{X}(R), \cO_{\mathfrak{X}(R)})=\lim_{\spec (V)\to \mathfrak{X}(R)}V,$$ and thus will not generically preserve colimits in $\aring_A$. 
In contrast, it will often happen that $\mathfrak{X}(-)$ will preserve colimits. In particular it is not clear whether the canonical map $$\mathfrak{X}(R)\to \uspec(\Gamma(\mathfrak{X}(R), \cO_{\mathfrak{X}(R)}))$$ will be an isomorphism in general. See the forthcoming Example \ref{eg: example of relative prismatization} and Question \ref{question: bhatt}.
\end{remark}

\begin{recollection}\label{rec: transmutation}(Transmutation)
    The theory of prismatization (and its cousins) provides us examples of $\mathfrak{X}$ coming from \emph{transmutation} \cite[Remark 2.3.8]{Bhatt22} as follows. There is an $\aring_A$-valued stack on $\cB\colon \daff_S\to \aring_A$ so that $$\mathfrak{X}(-)=\maps_{\aring_A}(-, \cB).$$

    If $\mathfrak{X}$ is of the form above, we say that it \emph{arises from transmutation.}

    Note that $\mathfrak{X}(-)$ arising from transmutation is a colimit preserving functor $\aring_A^\op\to \stacks_S.$ We have a converse.
\end{recollection}

\begin{lemma}\label{lem: limit gives transmutation}
    Let $\mathfrak{X}\colon \aring_A^\op\to \stacks_S$ be a colimit (in $\aring_A$) preserving functor. Then $\mathfrak{X}$ arises by transmutation.
\end{lemma}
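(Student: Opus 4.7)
The plan is to construct an $\aring_A$-valued stack $\cB$ on $\daff_S$ via pointwise representability, verify descent, and confirm that $\mathfrak{X}$ agrees with $\Maps_{\aring_A}(-, \cB)$. For each $T \in \daff_S$, consider the space-valued functor $F_T \colon \aring_A^{\op} \to \calS$ defined by $F_T(R) := \Maps_{\stacks_S}(T, \mathfrak{X}(R))$. Because $\Maps_{\stacks_S}(T, -)$ preserves small limits and $\mathfrak{X}$ sends colimits in $\aring_A$ to limits in $\stacks_S$ by hypothesis, the composite $F_T$ sends colimits in $\aring_A$ to limits in spaces, i.e., is a limit-preserving presheaf on $\aring_A$.

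The next step is a representability argument. Since $\aring_A$ is a presentable $\infty$-category, a limit-preserving presheaf $\aring_A^{\op} \to \calS$ is representable by an object of $\aring_A$; after the universe enlargement flagged in the conventions, this follows from the adjoint functor theorem \cite[Corollary 5.5.2.9]{HTT}. Hence there is $\cB(T) \in \aring_A$ and a natural-in-$R$ equivalence $F_T(R) \simeq \Maps_{\aring_A}(R, \cB(T))$. The essential uniqueness of representing objects upgrades the assignment $T \mapsto \cB(T)$ to a functor $\cB$ with the variance appropriate for a presheaf on $\daff_S$.

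To check that $\cB$ is an $\aring_A$-valued stack, let $T' \to T$ be an fpqc cover with \v{C}ech nerve $T^\bullet$. Since $\mathfrak{X}(R)$ is itself a stack, $\Maps_{\stacks_S}(T, \mathfrak{X}(R)) \simeq \lim_{[n] \in \Delta} \Maps_{\stacks_S}(T^n, \mathfrak{X}(R))$. Under the representability established above this reads, naturally in $R$, as $\Maps_{\aring_A}(R, \cB(T)) \simeq \Maps_{\aring_A}\bigl(R, \lim_{[n]} \cB(T^n)\bigr)$, so Yoneda forces $\cB(T) \simeq \lim_{[n]} \cB(T^n)$ in $\aring_A$. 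Unwinding, $\mathfrak{X}(R)(T) \simeq \Maps_{\aring_A}(R, \cB(T))$ functorially in both $R$ and $T$, so $\mathfrak{X}$ arises via transmutation from $\cB$ in the sense of Recollection \ref{rec: transmutation}.

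The only real obstacle is foundational: applying the representability theorem for the large category $\stacks_S$ requires either the universe enlargement trick or the accessible-presheaves variant mentioned in the conventions. Once these set-theoretic issues are dispensed with, the rest is a routine combination of Yoneda with limit preservation, and the assembled $\cB$ is essentially forced by the representability step.
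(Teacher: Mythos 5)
Your proposal is correct and is essentially the paper's argument: the paper applies the adjoint functor theorem \cite[Corollary 5.5.2.9]{HTT} to produce a left adjoint $\cB\colon \stacks_S\to \aring_A^{\op}$ and then restricts along the Yoneda embedding $\daff_S\into\stacks_S$, which is exactly your pointwise representability of $F_T(R)=\Maps_{\stacks_S}(T,\mathfrak{X}(R))$ unwound by hand. Your explicit verification that $\cB$ satisfies fpqc descent is a detail the paper leaves implicit, but it follows the same way (from $\mathfrak{X}(R)$ being a stack), so the two proofs coincide in substance.
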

\begin{proof}
This follows from the adjoint functor theorem \cite[5.5.2.9]{HTT}\footnote{We refer the reader to our conventions in \S~\ref{sec: conventions} for a fix to the implicit set theoretic issues.}. In more detail, we see that $\mathfrak{X}$ admits a left adjoint $\cB\colon \stacks_S\to \aring^{\op}$ with the property that 

    $$\maps_{\aring}(R,\cB(\cY))=\Maps_{\stacks_S}(\cY,\mathfrak{X}(R)).$$

    After restricting along the Yoneda embedding $\daff_S\into \stacks_S$ we obtain the requisite result.
\end{proof}

We want to now specialise to the case of cohomology theories of interest, in particular those which arise from prismatic theory (or cousins). 

\begin{corollary}\label{cor: colimit preserve cohomology arises from transmutation}
    Let $\cT\colon \aring_A\to \dalg(S)$ be colimit preserving. Then the composite 
    $$\aring_A^\op \xrightarrow{\cT} \dalg(S)^\op\xrightarrow{\uspec} \stacks_S$$ arises from transmutation.
\end{corollary}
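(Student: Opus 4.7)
The plan is to reduce directly to Lemma \ref{lem: limit gives transmutation} by verifying that the composite $\uspec_S\circ \cT\colon \aring_A^\op\to \stacks_S$ takes colimits in $\aring_A$ to limits in $\stacks_S$; once this is established, Lemma \ref{lem: limit gives transmutation} immediately produces an $\aring_A$-valued stack $\cB$ on $\daff_S$ with $\uspec_S(\cT(-))\simeq \Maps_{\aring_A}(-,\cB)$, which is exactly what it means to arise by transmutation in the sense of Recollection \ref{rec: transmutation}.

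The verification itself factors into two steps, each of which is built into the setup. First, by hypothesis $\cT\colon \aring_A\to \dalg(S)$ preserves colimits, so for any small diagram $R_\bullet$ in $\aring_A$ with colimit $R=\colim_i R_i$, one has $\cT(R)\simeq \colim_i \cT(R_i)$ in $\dalg(S)$. Second, by its very definition as the mapping-out functor $\uspec_S(B)=\Maps_{\dalg(S)}(B,-)$ (see Notation \ref{not: notation intro} and \cite[Construction 1.2]{MM24}), the functor $\uspec_S\colon \dalg(S)^\op\to \stacks_S$ sends colimits in its (contravariant) argument to limits of the corresponding representable stacks, so $\uspec_S(\colim_i \cT(R_i))\simeq \lim_i \uspec_S(\cT(R_i))$. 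Chaining these gives $\uspec_S(\cT(R))\simeq \lim_i \uspec_S(\cT(R_i))$, as required.

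Since both inputs to the composite contribute their expected behaviour on colimits, there is no real obstacle — the corollary is essentially a formal consequence of Lemma \ref{lem: limit gives transmutation} together with the universal property of $\uspec_S$. The only mild point to keep in mind is the set-theoretic subtlety already flagged in \S~\ref{sec: conventions}: the adjoint functor theorem used in the proof of Lemma \ref{lem: limit gives transmutation} is applied in the enlarged universe in which $\stacks_S$ is presentable, but this is orthogonal to the content of the present argument.
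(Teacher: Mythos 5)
Your proposal is correct and is essentially the paper's own argument: the paper likewise reduces to Lemma \ref{lem: limit gives transmutation} by observing that the composite is colimit preserving, merely leaving implicit the two-step verification (colimit preservation of $\cT$ plus the corepresentability of $\uspec_S$) that you spell out.
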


\begin{proof}
    The composite is colimit preserving and so Lemma \ref{lem: limit gives transmutation} applies. We thus have an isomorphism of $\stacks_S$-valued functors
    $$\uspec (\cT(-))=\maps_{\aring}(-,\cB)$$ for some $A$-algebra stack $\cB.$ 
\end{proof}

\begin{remark}\label{rem: inspired by mondal}
     Corollary \ref{cor: colimit preserve cohomology arises from transmutation} is inspired by the key insight of \cite{reconstruction_mondal}. However, in general, it is not so clear how to identify the $A$-algebra stack $\cB$ in general; for example, in the prismatic theory, the stacks in question are genuinely formal stacks and the Tannakian methods of \emph{loc.cit.} are not available. 
    However, we can still ask the following question in characteristic $p$ when the stacks are not formal.
\end{remark}

\begin{question}\label{remark: how to identify ring stack}
   When $\cT=\fil_\hodge\dR_{-/k}$ is regarded as a functor from $k$-algebras to $\dalg(\agm\times \spec(k))\simeq \fil\dalg_k$, then is it possible to recover $\G_a^{\dR,+}$ as a ring stack by using explicit calculations of  $\fil_\hodge\dR_{k[x]/k}$?
    We expect the answer to be yes, but we do not pursue it here.
\end{question}

\begin{remark}
    Corollary \ref{cor: colimit preserve cohomology arises from transmutation} gives one reason why the cohomology theories arising from prismatic cohomology and cousins should arise via transmutation. 
\end{remark}

\begin{remark}
 As far as the author understands, a similar result to Lemma \ref{lem: limit gives transmutation} is independently obtained in \cite[Appendix D]{LahotiManam2025CohomologyRingStacks}, although in a different context.
\end{remark}

\begin{remark}\label{remark: preservation of colimits implies Kunneth}
    If $\cT$ preserves all colimits then $\cT$ satisfies a K\"unneth formula in the following sense: If $R\leftarrow S\rightarrow Q$ is a span of animated $A$-algebras, then the canonical morphism in $\dalg(S)$
    $$\cT(R)\otimes_{\cT(S)} \cT(Q)\to \cT(R\otimes_S Q)$$ is an isomorphism.
    Therefore the notion of transmutation and K\"unneth seem to be closely related to each other. 
\end{remark}

\begin{variant}\label{variant: spec is spf}
In the notation of Definition \ref{def: 2de rham.}, it often happens that $S$ is a formal stack, for example like $\spf(W(k))$, $p$-adic completion of an algebraic stack $\spec(W(k)).$ More generally one could also work over formal stacks which are not completions. In this case a stack over $S$ is also canonically a formal stack with the formal direction being the same one as that of $S$. In this case if $\cT$ is the functor from Definition \ref{def: 2de rham.}, we denote $\uspec(\cT(-))$ by $\uspf(\cT(-))$ and the meaning is clear.
\end{variant}

\begin{example}\label{eg: example of relative prismatization}
Let $(A,I)$ be a bounded prism. Let $\mathfrak{X}(-)$ be the functor of \emph{relative prismatization} of \cite[Variant 5.1]{APCII} given by $\maps_{\oA}(-,\overline{W}(-))$ where $\overline{W}$ is a natural ring scheme associated to the Cartier-Witt divisor $I\to W(S)$ where $S$ is a $p$-nilpotent animated ring.
For any animated $p$-complete \footnote{The $p$-completion is not necessary and for any animated $\oA$-algebra $R$, the functor of relative prismatization depends only on its $p$-completion $R^\wedge_p$.}$\oA$-algebra $R$ one has the relative prismatization $(R/A)^\Prism$ a $(p,I)$-formal stack over $\spf(A)$ where the topology on $A$ is $(p,I)$-adic. In \cite[Theorem 7.20]{APCII} the authors prove that under certain finiteness conditions on  $R$ the derived $(p,I)$-complete $A$-algebras $\Gamma((R/A)^\Prism, \cO)$ and $\Prism_{R/A}$ agree.

But since $\Gamma((R/A)^\Prism, \cO)$ is naturally a limit, it is not clear if the functor $\Gamma((-/A)^\Prism, \cO)\colon \aring_{\oA}\to \dalg_A^{(p,I)\text{-comp}}$ preserves colimits. In contrast $\Prism_{R/A}$ preserves all colimits in animated $R$-algebra (eg. by \cite[Remark 4.1.8]{APC}) and therefore is the correct candidate for the comparison $$(R/A)^\Prism\to \uspf (\Prism_{R/A})$$ as stacks over $\spf(A)$ which is shown to be an isomorphism in \cite[Theorem 7.17]{APCII}.

\end{example}

\begin{question}[Bhatt]\label{question: bhatt}
We use the notation and context of Example \ref{eg: example of relative prismatization}. Even though one doesn't expect the $(p,I)$-complete derived $A$-algebras $\Prism_{R/A}$ and $\Gamma((R/A)^\Prism, \cO)$ to be equivalent in general, do they corepresent equivalent functors over connective $(p,I)$-complete derived $A$-algebras?
In other words are the functors $\Maps_{\dalg_A^{(p,I)\text{-comp}}}(\Prism_{R/A},-)$ and $\Maps_{\dalg_A^{(p,I)\text{-comp}}}(\Gamma((R/A)^\Prism, \cO),-)$ equivalent, via the natural map, after restriction to $(\dalg_A^{(p,I)\text{-comp}})^\cn$?

We expect the answer to be no, and note that it likely amounts to producing a `large' animated algebra where $\Prism_{R/A}$ and $\Gamma((R/A)^\Prism$ are not equivalent.
\end{question}

We next want to give a criterion for finding a map $\mathfrak{X}(-)\to \uspec (\cT(-))$ in general. This will be useful in the sequel.

\begin{construction}\label{cons: can map}
Let $\cC\subset \aring_A$ be a full subcategory with the property that
\begin{enumerate}
\item $\cC$ generates $\aring_A$ under sifted colimits,
    \item and the values of $\cT$ on $R\in \aring_A$ are left Kan extended from $\cC.$ 
\end{enumerate}

Assume that for $R\in \cC$ there is a natural morphism $$\alpha^\cC_R\colon \cT(R)\to  f_*^{\mathfrak{X}(R)}\cO_{\mathfrak{X}(R)}.$$

Then by the universal property of left Kan extensions we obtain a map $$\alpha_{R}\colon \cT(R)\to f_*^{\mathfrak{X}(R)}\cO_{\mathfrak{X}(R)}$$ for all $R\in \aring_A$ and by adjunction, we obtain a map $$\eta_R\colon \mathfrak{X}(R)\to \uspec (\cT(R)).$$
\end{construction}

\begin{remark}\label{remark: testing on A[x]}
    Let $A$ be a discrete ring and assume that both $\cT$ and $\mathfrak{X}$ preserve all colimits in $\aring_A.$ Then the isomorphism $\eta_R$ of Construction \ref{cons: can map} can be tested on the polynomial ring $A[x].$ Indeed, note that preserving colimits implies both sides of the source and target can be reduced to finitely generated polynomial rings over $A$. Now preservation of colimits also implies K\"unneth for $\cT(R)$ by Remark \ref{remark: preservation of colimits implies Kunneth} and so we can reduce the right hand side to just $\cT(A[x])$. Similarly we may reduce fiber products for $\mathfrak{X}$ to the case of $\mathfrak{X}(A[x]).$
\end{remark}

\begin{warning}
Even though Remark \ref{remark: testing on A[x]} implies that the isomorphism can be tested on $A[x]$, the morphism $\eta_R$ of Construction \ref{cons: can map} has to be defined naturally in $R\in \aring_A$. This is because $\mathrm{End}_{\aring_A}(A[x])$ will be very non-trivial in general.  
\end{warning}

We next give a criterion for testing that the canonical map of Construction \ref{cons: can map} is an isomorphism in the case $A$ is a discrete ring.

\begin{lemma}\label{lem: checking on a single cover}
Let $A$ be discrete and $\cT$ and $\mathfrak{X}$ be colimit preserving along with   a natural in $R$ map\footnote{Note that this map in general need not come from Construction \ref{cons: can map}. All we need is \emph{a} natural map.} $$\eta_R\colon \mathfrak{X}(R)\to \uspec(\cT(R)).$$  Assume further that $A[x]$ admits a map   $A[x]\to (A[x])_\infty$ with the following properties
\begin{enumerate}

 \item  Both $\mathfrak{X}(-)$ and $\uspec(\cT(-))$ send $A[x]\to (A[x])_\infty$  to an fpqc epimorphism in the $\infty$-topos $\shv(\daff_S)$, 
 \item The canonical map $\eta_{(A[x])_\infty}$ is an isomorphism.
\end{enumerate}
Then $\eta_R$ is an isomorphism for every $R\in \aring_A.$ 
\end{lemma}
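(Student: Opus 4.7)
The plan is to split into two stages: a reduction to verifying $\eta_{A[x]}$ is an isomorphism, followed by a descent argument along the cover.

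For the reduction stage, I use that both $\mathfrak{X}$ and $\uspec(\cT(-))$ turn colimits in $\aring_A$ into limits in $\stacks_S$ (this follows from the colimit-preservation of $\mathfrak{X}$ and of $\cT$, since $\uspec$ is itself limit-reversing). Every $R \in \aring_A$ is a sifted colimit of finitely generated polynomial $A$-algebras, so it is enough to check that $\eta_{A[x_1,\ldots,x_n]}$ is an isomorphism for each $n \geq 0$. A K\"unneth argument (both functors send pushouts in $\aring_A$ to pullbacks in $\stacks_S$) reduces this further to the cases $R = A$ and $R = A[x]$. The split inclusion $A \hookrightarrow A[x]$ with left inverse $x \mapsto 0$ exhibits $\eta_A$ as a retract of $\eta_{A[x]}$, so a retract of an isomorphism is an isomorphism, and the entire problem reduces to showing $\eta_{A[x]}$ is an isomorphism.

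For the descent stage, set $B = A[x]$, $B_\infty = (A[x])_\infty$, and write $\alpha \colon \mathfrak{X}(B_\infty) \to \mathfrak{X}(B)$ and $\beta \colon \uspec(\cT(B_\infty)) \to \uspec(\cT(B))$ for the effective epimorphisms furnished by hypothesis (1). In the $\infty$-topos $\stacks_S$, base change along an effective epimorphism is conservative, so $\eta_B$ is an isomorphism if and only if its pullback along $\beta$ is. Naturality of $\eta$ supplies a canonical comparison
\[ \mathfrak{X}(B_\infty) \longrightarrow \mathfrak{X}(B) \times_{\uspec(\cT(B))} \uspec(\cT(B_\infty))\]
whose post-composition with the projection to $\uspec(\cT(B_\infty))$ recovers $\eta_{B_\infty}$, an isomorphism by hypothesis (2). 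The lemma therefore reduces to showing that this canonical comparison is an equivalence, equivalently that the naturality square attached to the morphism $B \to B_\infty$ is Cartesian.

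To establish this Cartesianness I pass to Čech nerves. By the K\"unneth identities $\mathfrak{X}(B_\infty)^{\times_{\mathfrak{X}(B)} n+1} \simeq \mathfrak{X}(B_\infty^{\otimes_B n+1})$ and $\uspec(\cT(B_\infty))^{\times_{\uspec(\cT(B))} n+1} \simeq \uspec(\cT(B_\infty^{\otimes_B n+1}))$, the Čech nerves of $\alpha$ and $\beta$ are realized by applying $\mathfrak{X}$ and $\uspec(\cT(-))$ to the cobar complex $B_\infty^{\otimes_B \bullet+1}$. The natural transformation $\eta$ induces a morphism between these groupoid objects in $\stacks_S$ which is $\eta_{B_\infty}$ (an isomorphism) at level $0$. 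A Segal-type matching argument, using that both augmentations are effective colimits of their Čech nerves, then identifies the two groupoid objects and yields Cartesianness of the naturality square, and hence $\eta_B$ is an isomorphism.

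The main obstacle is the last step: extracting the required identification of Čech nerves beyond the $0$-th level --- concretely, the matching at the first Segal level, which corresponds to $\eta_{B_\infty \otimes_B B_\infty}$ being an isomorphism --- from the hypotheses alone. This is the technical core of the argument and demands careful use of the groupoid-object structure in the $\infty$-topos $\stacks_S$ together with the effective-epi property of both covers, so as to close the loop without assuming $\eta_B$ iso a priori.
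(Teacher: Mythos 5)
Your reduction stage is exactly the paper's (Remark \ref{remark: testing on A[x]}): sifted colimits, K\"unneth, and the case $R=A$ being automatic, so everything comes down to $\eta_{A[x]}$. Your descent stage also sets up the same object as the paper --- the map of augmented simplicial objects from the \v{C}ech conerve $Q^* = B_\infty^{\otimes_B \bullet+1}$, sent by $\mathfrak{X}$ and $\uspec(\cT(-))$ to the \v{C}ech nerves of the two effective epimorphisms. But you have not proved the lemma: the step you defer to a ``Segal-type matching argument'' is precisely the content, and it cannot be extracted from the hypotheses at level $0$ alone. Concretely, the level-$n$ component of the comparison of \v{C}ech nerves is $\eta_{B_\infty^{\otimes_B(n+1)}}$, and after using $\eta_{B_\infty}$ to identify the sources, the level-$1$ map becomes $\mathfrak{X}(B_\infty)\times_{\mathfrak{X}(B)}\mathfrak{X}(B_\infty)\to \mathfrak{X}(B_\infty)\times_{\uspec(\cT(B))}\mathfrak{X}(B_\infty)$, whose being an equivalence is essentially equivalent to $\eta_B$ being a monomorphism --- i.e.\ to most of what you are trying to prove. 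A toy example shows the purely formal deduction is impossible: in any $\infty$-topos take $\mathfrak{X}(B_\infty)=\uspec(\cT(B_\infty))=\ast$, $\mathfrak{X}(B)=\ast$, $\uspec(\cT(B))=BG$ with $G$ nontrivial; both augmentations are effective epis and the level-$0$ comparison is an equivalence, yet $\ast\to BG$ is not.

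The way out, and the way the paper actually argues, is to read hypothesis $(2)$ as asserting that $\eta$ is an equivalence on \emph{every} term $B_\infty^{\otimes_B(n+1)}$ of the \v{C}ech conerve, not just on $B_\infty$. (The statement of the lemma is terse on this point, but the proof says ``the top horizontal arrow is an isomorphism on each degree by hypothesis,'' and in the application to Theorem \ref{thm: main thm} this is exactly what gets verified: each term of the conerve of $k[x]\to k[x^{1/p^\infty}]$ is again quasiregular semiperfect by \cite[Lemma 4.30]{BhattMorrowScholze2019THHandPadicHodge}, so Theorem \ref{thm: eta is isomorphism when qrsp} applies levelwise.) With that input the argument closes immediately --- a levelwise equivalence of simplicial objects induces an equivalence on geometric realizations, and both realizations recover the targets because the augmentations are effective epimorphisms --- with no need for conservativity of base change or a Cartesian naturality square. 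So either strengthen the hypothesis you are willing to verify in applications, or supply an actual argument for level $\geq 1$; as written, your proof has a genuine gap at its central step.
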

\begin{proof}
    By Remark \ref{remark: testing on A[x]} we may reduce to checking that $\eta_{A[x]}$ is an isomorphism. Set $P=A[x]$ and $Q=(A[x])_\infty$. Let $Q^*$ be the Cech conerve of $P\to Q$. Then since $\mathfrak{X}(-)$ preserves all colimits, it follows that the induced map of simplicial objects $\mathfrak{X}(Q^*)\to \mathfrak{X}(P)$ is the Cech nerve of the morphism $\mathfrak{X}(Q)\to \mathfrak{X}(P)$. A similar remark applies to the induced map $\uspec(\cT(Q^*))\to \uspec(\cT(P))$.
    
    Thus it follows that we have a commutative diagram of augmented simplicial objects where both the vertical arrows are Cech nerves of their augmentation, with the augmentation being an fpqc epimorphism by hypothesis.
\[\begin{tikzcd}
	{\mathfrak{X}(Q^*)} && {\uspec (\cT(Q^*))} \\
	{\mathfrak{X}(P)} && {\uspec (\cT(P))}
	\arrow["{\eta_{Q^*}}", from=1-1, to=1-3]
	\arrow[from=1-1, to=2-1]
	\arrow[from=1-3, to=2-3]
	\arrow["{\eta_P}", from=2-1, to=2-3]
\end{tikzcd}\]

Now the top horizontal arrow is an isomorphism on each degree by hypothesis, whence by taking colimits we conclude.
\end{proof}

\begin{remark}
    We expect that the notion of de Rham affineness is closely related to the notion of unwinding ring stacks presented in \cite[\S~2.4]{LiMondal-Endomorphisms-deRham}, but we have not pursued a precise connection at this point.
\end{remark}

\section{Some loci of $k^\nyg$}\label{sec: cons strat of knyg}

This section is essentially expository, but will be useful in stating the corollaries to our theorem.

Recall that if $R$ is an animated $k$-algebra then by $R^\nyg$ is obtained via transmutation through a certain ring stack $\G_a^\nyg.$  We briefly recall the construction of $\GG_a^\nyg$ below.

\begin{construction}

\begin{figure}[h]
    \centering
\[\begin{tikzcd}[cramped]
	0 & {\mathbf{G}_a^\sharp} & W & {F_*W} & 0 \\
	0 & {\mathbf{V}(L)^\sharp} & {M_u} & {F_*W} & 0 \\
	0 & {\mathbf{G}_a^\sharp} & W & {F_*W} & 0 \\
	&& {}
	\arrow[from=1-1, to=1-2]
	\arrow[from=1-2, to=1-3]
	\arrow["{u^\sharp}", from=1-2, to=2-2]
	\arrow["p"'{pos=0.2}, color={rgb,255:red,255;green,51;blue,61}, curve={height=18pt}, from=1-2, to=3-2]
	\arrow["F", from=1-3, to=1-4]
	\arrow[from=1-3, to=2-3]
	\arrow["p"'{pos=0.2}, color={rgb,255:red,255;green,51;blue,61}, curve={height=18pt}, from=1-3, to=3-3]
	\arrow[from=1-4, to=1-5]
	\arrow["{\mathrm{id}}", from=1-4, to=2-4]
	\arrow["p"'{pos=0.2}, color={rgb,255:red,255;green,51;blue,61}, curve={height=18pt}, from=1-4, to=3-4]
	\arrow[from=2-1, to=2-2]
	\arrow[from=2-2, to=2-3]
	\arrow["{t^\sharp}", from=2-2, to=3-2]
	\arrow[from=2-3, to=2-4]
	\arrow["{d_{u,t}}", from=2-3, to=3-3]
	\arrow[from=2-4, to=2-5]
	\arrow["p", from=2-4, to=3-4]
	\arrow[from=3-1, to=3-2]
	\arrow[from=3-2, to=3-3]
	\arrow["F", from=3-3, to=3-4]
	\arrow[from=3-4, to=3-5]
\end{tikzcd}\]

\caption{The diagram of $W$-module schemes over $k^\nyg$ \cite[Diagram 3.3.2]{Bhatt22}.}
    \label{fig: W-module schemes}
\end{figure}

In Figure \ref{fig: W-module schemes} we have a diagram of $W$-module schemes over $k^\nyg.$ We now explain the terms in the diagram. 

\begin{itemize}
    \item First note that $k^\nyg$ has a moduli description.  Indeed, for any $p$-nilpotent (discrete) ring $R$, the $1$-groupoid $k^\nyg(R)$ is the moduli of triples $(M, u,t)$ where $M\in \Pic(R)$ and $u\colon R\to M$ and $t\colon M\to R$ are sections and cosections so that the composite is $t\circ u=p$. 

This induces a universal line bundle $L\in \Pic(k^\nyg)$ and a universal section and cosection $$\cO\xrightarrow{u} L\xrightarrow{t} \cO$$ with composite $t\circ u=p.$

\item 
After taking divided power completions along the zero section we get a map 
$$\G_a^\sharp\xrightarrow{u^\sharp} \mathbf{V}(L)^\sharp\xrightarrow{t^\sharp} \G_a^\sharp$$ with composite  $t^\sharp\circ u^\sharp=p$ still by functoriality. This defines the left most vertical arrow in Figure \ref{fig: W-module schemes}. 
\item The middle group scheme $M_u$ is defined as the pushout of the span $$\mathbf{V}(L)^\sharp\xleftarrow{u^\sharp}\G_a^\sharp \xrightarrow{\mathrm{can}} W$$ where the map on the right is given via the identification  $\G_a^\sharp=W[F]$ where $W[F]$ is the Frobenius kernel of $W.$

\item The map $d_{u,t}\colon M_u\to W$ is given by the universal property of the pushout. 

\item The right most diagram is self-explanatory. Note that the map $M_u\to F_*W $ exists again by the universal property of the pushout: the map $\mathbf{V}(L)^\sharp\to F_*W$ is $0.$

\item One checks that the map $d_{u,t}\colon M_u\to W$ is a quasi-ideal in the sense of  \cite[\S~3.3]{Drinfeld2021RingGroupoid} and defines $$\GG_a^\nyg:=\Cone(M_u\xrightarrow{d_{u,t}} W).$$

\item If $R$ is an animated ring, then the $p$-adic formal stack $R^\nyg$ as a stack over $k^\nyg$ can be described by a simple formula 

$$R^\nyg=\Maps_{k}(R,\GG_a^\nyg(-))$$ where the mapping space is taken in animated rings over $k.$

\end{itemize}

\end{construction}

\begin{remark}
 In Figure \ref{fig: W-module schemes}, one thinks of the left most vertical arrows $u^\sharp$ and $t^\sharp$ as Cartier divisors on $k^\nyg$ which can be turned on or off by either setting them to $0$, inverting them or even doing nothing. The cone presentation $M_u\xrightarrow{d_{u,t}}W$ then interacts with these parameters and one obtains various specializations of the ring stack $\GG_a^\nyg$ on the stack $k^\nyg$ as one changes these parameters. Via the general machinery of transmutation one obtains specializations of $R^\nyg$ by restriction. We now explain the loci we need below.   
\end{remark}

\begin{construction}(The locus $k^\crys$).\label{const: cryst} The locus $k^\crys$ is obtained as $k^\nyg_{t\neq 0}=\spf(W(k))$. Via transmutation the structure on prismatic cohomology determined on this locus can be understood by the restriction of the ring stack $(\GG_a^\nyg)_{t\neq 0}.$ In this case we see that $(\GG_a^\nyg)_{t\neq 0}=F_*W/p=\GG_a^\mathrm{crys}$ since
$$\fib(\GG_a^\nyg\to F_*W/p)=\mathrm{cone}(\mathbf{V}(L)^\sharp\xrightarrow{t^\sharp} \GG_a^\sharp)$$ and the latter is zero when $t\neq 0.$  Thus if $R$ is an animated $k$-algebra, then $(R)^\nyg_{t\neq 0}=(R/W)^\mathrm{crys}$ is the ring stack parametrising crystalline cohomology of $R$ or the \emph{crystallization} of $R$ (cf. \cite[Remark 2.5.12]{Bhatt22} for the definition of this functor).
\end{construction}

\begin{construction}\label{const prism}(The locus $k^{\Prism}$)
    When $u\neq 0$, the map $\GG_a^\sharp\xrightarrow{u^\sharp} \mathbf{V}(L)^\sharp$ is an isomorphism. A simple computation with the pushout description of $M_u$ shows that $$\fib(W/p\to \GG_a^\nyg)=\Cone(\GG_a^\sharp\xrightarrow{u^\sharp} \mathbf{V}(L)^\sharp)$$ and the target is $0$ since $u^\sharp$ is an isomorphism. Hence the map $W/p\to \GG_a^\nyg$ is an isomorphism. Thus $\GG_a^\nyg|_{u\neq 0}\simeq W/p$ i.e. the ring stack parametrizing the prismatization in positive characteristic. 
\end{construction}

\begin{construction}\label{cons: kconj}
    (The locus $k^{\HT,\conj}$) This is obtained as $\knyg_{t=0}=\agm$, in which case the diagram of Figure \ref{fig: W-module schemes} specializes to the diagram of \cite[Construction 2.7.11]{Bhatt22} and the ring stack $\G_a^\nyg$ restricts to $F_*^{-1}\G_a^{\dR,\conj, W}$ of \emph{loc.cit.} For any animated $k$-algebra $R$, the stack one gets is $R^{\HT,\conj}:=\varphi_*R^{\dR,\mathrm{conj}}$, parametrising the conjugate filtration on Hodge-Tate cohomology, which comes equipped with a map to $\agm.$
\end{construction}

\begin{construction}\label{cons: khodge}(The locus $k^{\dR,+}$). This is obtained by restriction to the locus $k^\nyg_{u=0}=\agm$ whence the middle vertical quasi-ideal restricts to $$(\G_a^\nyg)_{u=0}=\Cone(\mathbf{V}(\cO(-1))\oplus F_*W\xrightarrow{(t^\sharp,V)} W)\simeq \G_a^{\dR,+}.$$
Any animated $k$-algebra $R$ gives by transmutation the stack $(R/k)^{\dR,+}$, parametrising the Hodge filtered derived de Rham cohomology of $R.$  
\end{construction}

\begin{construction}\label{const HT}(The locus $k^{\HT}$).\
    The locus $k^{\HT}$ is obtained as the restriction $k^\nyg_{t=0,u\neq 0}=\spec(k)$. We can analyze the restriction of ring stack on this loci in $2$-steps. When $u\neq 0$ then $(\GG_a^\nyg)_{u\neq 0}=W/p$ from Construction \ref{const prism}.

    Now the restriction to the loci $t=0$ amounts to reducing mod $p$ and we recover the stack $W/p=F^{-1}_*\GG_a^{\dR}.$ This ring stack has an additional feature owing to the fact $\pi_0\GG_a^\dR=F_*\GG_a$ using derived deformation theory as we now explain.

    There is a canonical morphism $\GG_a^{\dR}\to \pi_0\GG_a^{\dR}=F_*\GG_a$ of sheaves of animated rings, which (after inverting $F_*$) via transmutation gives for any animated ring $R$, a morphism $R^{\HT}:=(R^{(-1)}/k)^{\dR}\to \spec (R)$ which, when $R$ is smooth over $k$, is a gerbe banded in $\mathbf{V}(T_{R/k})^\sharp$ and splits whenever $R$ admits a $p$-completely flat $\delta$-lift $\widetilde{R}/W$.  We call this gerbe the \emph{Hodge-Tate} gerbe following \cite[Remark 1.1]{APCII}.
\end{construction}

\begin{construction}(The locus $k^{\mathrm{Hodge}}$)\label{const: hodge}.  This locus is obtained as the restriction $k^\nyg_{u=t=0}\simeq \bgm$. This locus lives over $B\GG_m$ and one checks that $$\GG_a^\mathrm{Hodge}=\mathrm{cone}(\mathbf{V}(L)^{\sharp} \oplus F_*W \xrightarrow{(0,V)}W)\simeq \Cone (\mathbf{V}(L)^\sharp\xrightarrow{0}\GG_a)=\GG_a\oplus B\mathbf{V}(L)^\sharp$$ which is the graded of the Hodge filtration on $\GG_a^{\dR}$ and hence for any animated $\FF_p$-algebra $R$ we learn that $(R/k)^\hodge$ is the graded stack corresponding to the Hodge filtration on its de Rham cohomology.    
\end{construction}

\section{The Rees stack
  \texorpdfstring{$\cR_\nyg(F^*\Prism_{R/k})$}{R_nyg(F^*Delta_{R/k})}}
\label{sec: rees stack}

In this section our goal is to explain the basic functorialities of the stack $\cR_\nyg(F^*\Prism_{R/k})$ of item $(4)$ in Notation \ref{not: notation intro} for an animated $k$-algebra $R$ and to explain its restriction to the various loci of \S~\ref{sec: cons strat of knyg}. 

We recall the construction of the stack in Construction \ref{cons: rees stack of rnyg}. But before that we recall the Rees algebra construction for derived algebras, discussed in \cite[\S~4]{dalggauges}, in our context.

\begin{construction}\label{cons: rees cons for filnyg}
Let $\fil\dalg_{\fil_{(p)}W(k)}^{\pcomp}$ be the $\icat$ of $p$-complete filtered derived algebras over the filtered ring $\fil_{(p)}W(k)$ equipped with its $p$-adic filtration. 
    
If $R$ is an animated ring over $k$, then $\fil_\nyg F^*\Prism_{R/k}\in \fil\dalg_{\fil_{(p)}W(k)}^{\pcomp}.$ Indeed, the fact that $\fil_\nyg F^*\Prism_{R/k}$ receives an $E_\infty$-map from $\fil_{(p)}W(k)$ is formal. The derived algebra structure can be deduced from the proof of \cite[Proposition 5.1.1]{APC} and will be explained in slightly more detail in \cite{Sah25Nyg}\footnote{We sketch the argument. In the proof of \cite[Proposition 5.1.1]{APC}, the functor $\fil_\nyg(F^*\Prism_{-/k})$ is left and right Kan extended from the category $\cC$ of `large' quasisyntomic (hence discrete) $k$-algebras. Here a $k$-algebra $R$ is `large' if it is generated by the image of $R^\flat:=\lim_{x\to x^p}R$ under the canonical first coordinate map $R^\flat\to R$. The values of $\fil_\nyg(F^*\Prism_{-/k})$ on $\cC$ is discrete and  $\fil\dalg_{\fil_{(p)}W(k)}^{\pcomp}$ is presentable, the left and right Kan extensions are computed in  $\fil\dalg_{\fil_{(p)}W(k)}^{\pcomp}$. }.

Now as explained in \cite[\S~4]{dalggauges}, there is a chain of equivalences of $\icats$
$$\dalg(k^\nyg)\simeq \fil\dalg_{\fil_{(p)}W(k)}^{\pcomp}\simeq \Gr\dalg_{\wkut}^\pcomp,$$ where the right most category denotes the $\icat$ of $p$-complete graded derived algebras over the graded $p$-complete ring $\wkut.$ 
In what follows we identify the left most and right most categories implicitly and denote the image of $\fil_\nyg F^*\Prism_{R/k}$ in either as $\rees(\fil_\nyg F^*\Prism_{R/k}).$\footnote{In \cite{dalggauges} we use the notation $\pi_!\rees(\fil_\nyg F^*\Prism_{R/k})$ to denote the comodule structure over the coaction given by bicommutative bialgebra $W(k)[\ZZ]$, in other words, to remember the grading. We drop that notation for now since no confusion can arise.}

Explicitly we have $$\rees(\fil_\nyg F^*\Prism_{R/k})=\widehat{\bigoplus_{i\in \ZZ}}\fil_\nyg^iF^*\Prism_{R/k}t^{-i}$$ as a graded $p$-complete $\wkut$-algebra.
\end{construction}

We next prove some functorialities of the construction \ref{cons: rees cons for filnyg} across various specializations. Note that in the following lemma we have canonically trivialised the Breuil-Kisin twist (as we work in characteristic $p$) and all base changes are graded.

\begin{lemma}\label{lem: functorialities for rees cons}
     For any animated $k$-algebra $R$ the graded $p$-complete derived $\wkut$-algebra $\rees(\fil_\nyg F^*\Prism_{R/k})$ has the following features across various specializations of $\wkut$ 
     \begin{enumerate}

    \item After $p$-complete graded base change along $\wkut\to (\wkut)[1/t]$ we obtain $F^*\Prism_{R/k}.$
      \item After $p$-complete graded base change along $\wkut\to (\wkut)[1/u]$ we obtain $\Prism_{R/k}.$

      \item After graded base change of along $W\langle u,t\rangle /(ut-p)\to k[t]$ one obtains $$\rees(\fil_{\mathrm{Hodge}} \dR_{R/k})=\bigoplus_{i\in \ZZ} \fil^i_{\hodge}\dR_{R/k}t^{-i}$$ where $\fil_{\hodge}\dR_{R/k}$ is the derived de Rham complex of $R$ relative to $k$ equipped with its Hodge filtration.

    \item After graded base change of along $W\langle u,t\rangle /(ut-p)\to k[u]$ one obtains $$\rees(\fil_{\mathrm{conj}} \overline{\Prism}_{R/k})=\bigoplus_{i\in \ZZ} \fil^i_{\conj}\overline{\Prism}_{R/k}u^{i}$$ where $\overline{\Prism}_{R/k}$ is the Hodge-Tate complex of $R$ relative to $k$ equipped with its conjugate filtration.

    \item After the composite graded base change  $W\langle u,t\rangle /(ut-p)\to k[u]\to k[u,u^{-1}]$ we obtain $\HTp_{R/k}.$
        \item After graded base change  along $W\langle u,t\rangle /(ut-p)\to k$ one obtains the graded algebra $\bigwedge^* L_{R/k}[-*]$.

     \end{enumerate}
\end{lemma}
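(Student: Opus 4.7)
The plan is to verify each of the six specialisations by direct computation with the explicit Rees formula
$$\rees(\fil_\nyg F^*\Prism_{R/k}) = \widehat{\bigoplus_{i\in \ZZ}} \fil^i_\nyg F^*\Prism_{R/k} \cdot t^{-i}$$
from Construction \ref{cons: rees cons for filnyg}, combined with well-known specialisations of Nygaard-filtered prismatic cohomology. For item (1), inverting $t$ in any Rees algebra recovers the underlying filtered object, and exhaustiveness of the Nygaard filtration then gives $F^*\Prism_{R/k}$. For item (2), the divided Frobenii $\varphi_i\colon \fil^i_\nyg F^*\Prism_{R/k} \to \Prism_{R/k}$ assemble into an isomorphism of graded $W(k)[u^{\pm 1}]$-algebras sending $x \cdot t^{-i} \mapsto \varphi_i(x) \cdot u^i$; this is the graded reformulation of the Frobenius comparison and is manifestly natural in $R$.

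For items (3), (4), and (6), the base-change maps are quotients: $\wkut \to k[t]$ kills $(u,p)$, $\wkut \to k[u]$ kills $(t,p)$, and $\wkut \to k$ kills both (using $ut = p$ throughout). Item (3) uses the Nygaard-Hodge comparison, namely that $(F^*\Prism_{R/k}, \fil_\nyg)$ modulo $p$ agrees with $(\dR_{R/k}, \fil_\hodge)$ as filtered objects; this is \cite[\S~5.6]{APC} in the relative setting. Item (4) uses the standard fact that killing $t$ on a Rees algebra takes the associated graded, combined with the fiber-sequence identification $\gr^i_\nyg F^*\Prism_{R/k} \simeq \fil^i_\conj \HTp_{R/k}$ (where the Breuil-Kisin twist is canonically trivial in characteristic $p$). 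Item (6) is the common composite of the two quotient operations and computes the associated graded of the Hodge filtration on $\dR_{R/k}$, which by derived de Rham theory is $\bigoplus_i \bigwedge^i L_{R/k}[-i]$. Item (5) follows from (4) by further inverting $u$, using that the conjugate filtration on $\HTp_{R/k}$ is exhaustive.

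The main technical obstacle is lifting each of these module-level identifications to an equivalence of \emph{derived algebras} in the corresponding graded $\icats$. For this I would invoke the classification of $\dalg(k^\nyg)$, $\dalg(\agm)$, and $\dalg(\bgm)$, and the compatibility of their base-change functors with the Rees construction, as developed in \cite[\S~4]{dalggauges}. This, together with left Kan extension from the category of large quasisyntomic $k$-algebras on which both sides are discrete (as used in Construction \ref{cons: rees cons for filnyg}), reduces each identification to its classical filtered-module incarnation, where the cited comparisons apply directly.
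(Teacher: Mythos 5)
Your overall strategy coincides with the paper's: compute each specialization degree by degree from the explicit Rees formula, feed in the standard comparisons from \cite[\S~5]{APC}, and observe that the identifications upgrade to derived algebras because they are implemented by (completed) base changes. Items (1), (4), (5), (6) match the paper's proof essentially verbatim.

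There is, however, a genuine problem with your item (3). You assert that ``$(F^*\Prism_{R/k},\fil_\nyg)$ modulo $p$ agrees with $(\dR_{R/k},\fil_\hodge)$ as filtered objects,'' i.e.\ that $\fil^i_\nyg F^*\Prism_{R/k}\qq p\simeq \fil^i_\hodge\dR_{R/k}$. This is false for $i\geq 1$ and is not what the graded base change along $\wkut\to k[t]$ computes. Since $u$ has degree $-1$ and $ut=p$, multiplication by $u$ on $\rees(\fil_\nyg F^*\Prism_{R/k})$ acts in degree $-i$ as the map $\fil^{i-1}_\nyg F^*\Prism_{R/k}\xrightarrow{\,p\,}\fil^{i}_\nyg F^*\Prism_{R/k}$, so killing $u$ produces $\cofib\bigl(\fil^{i-1}_\nyg F^*\Prism_{R/k}\xrightarrow{p}\fil^{i}_\nyg F^*\Prism_{R/k}\bigr)$ in degree $-i$ --- a d\'ecalage-type quotient by $p\cdot\fil^{i-1}_\nyg$, not the termwise reduction $\fil^i_\nyg\qq p$. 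It is this cofiber, not the mod-$p$ reduction of the filtered object, that the filtered de Rham comparison \cite[Corollary 5.2.8]{APC} identifies with $\fil^i_\hodge\dR_{R/k}$; executing your step as written would yield the wrong answer. A smaller issue of the same flavor occurs in item (2): the claim that the divided Frobenii ``manifestly'' assemble into an isomorphism after inverting $u$ is precisely the Bhatt--Lurie localization \cite[Corollary 5.2.15]{APC}, which is not formal --- it requires reducing to $W(k)[x_1,\dots,x_d]^\wedge_p$ and using that the colimit of the divided Frobenii stabilizes at $i=d$ (the paper devotes Lemma \ref{lem: Bhatt-Lurie localisation} to this). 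Citing the result is acceptable, but it should be flagged as the substantive input rather than an obvious reformulation.
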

\begin{proof}
These are all consequences of the standard behaviour of the Nygaard filtered prismatization.

\begin{enumerate}
    \item This follows from inverting $t$ in $\rees(\fil_\nyg F^*\Prism_{R/k})$ and using the equivalence between $p$-complete graded $\wkut[1/t]\simeq W(k)\langle t,t^{-1}\rangle$-modules and $p$-complete $W(k)$-modules by taking the degree $0$ piece. 
    \item This follows from the Bhatt-Lurie localization for relative Frobenius\footnote{But perhaps in characteristic $p$, this goes back to \cite{FontaineJannsen-Frobenius-Gauges-I}.} \cite[Corollary 5.2.15]{APC} which we reprove in our language in the forthcoming Lemma \ref{lem: Bhatt-Lurie localisation}.

    \item Note that $u=pt^{-1}$. Setting $u=0$ in  $$\rees(\fil_\nyg F^*\Prism_{R/k})=\widehat{\bigoplus_{i\in \ZZ}}\fil_\nyg^iF^*\Prism_{R/k}t^{-i}$$ replaces the coefficients of $t^{-i}$ with $$\cofib(\fil^{i-1}_\nyg\Prism_{R/k}\xrightarrow{p}\fil^{i}_\nyg\Prism_{R/k})\simeq \fil^{i}_{\hodge}\dR_{R/k},$$ where the equivalence comes from the filtered de Rham comparison for relative Nygaard filtered de Rham cohomology from \cite[Corollary 5.2.8.]{APC} which then gives us the result. 

    \item This follows from noting that $\gr^n_\nyg\Prism_{R/k}\simeq \fil^n_{\conj}\HTp_{R/k}$, as explained in \cite[Remark 5.1.2]{APC}. Note that the conjugate filtration is an increasing filtration whence the explicit formula $$\rees(\fil_{\mathrm{conj}} \overline{\Prism}_{R/k})=\bigoplus_{i\in \ZZ} \fil^i_{\conj}\overline{\Prism}_{R/k}u^{i}$$ has the same sign for the exponent of the variable $u$ and the piece of the filtration.
    \item This amounts to taking the underlying object of the conjugate filtration on Hodge-Tate cohomology.
    \item This is a consequence of the fact that graded of the conjugate filtration is Hodge cohomology.
\end{enumerate}

\end{proof}

The following lemma was used in the proof of item $(3)$ in the proof of Lemma \ref{lem: functorialities for rees cons}

\begin{lemma}[Bhatt-Lurie localization for relative Frobenius]\label{lem: Bhatt-Lurie localisation}
    Let $\varphi\colon \fil^\bullet_\nyg F^*\Prism_{R/k}\to p^\bullet \Prism_{R/k}$ be the relative Frobenius on $\Prism_{R/k}.$ Then the induced map on $\ZZ$-graded $p$-complete derived algebras $$\rees(\fil^\bullet_\nyg F^*\Prism_{R/k})=\widehat{\bigoplus_{i\in \ZZ}}\fil^i_\nyg \Prism_{R/k}t^{-i}\to   \widehat{\bigoplus_{i\in \ZZ}} (p)^i \Prism_{R/k}t^{-i}$$ is a $p$-complete localization at one element. Here the target has graded pieces $(p)^i\otimes \Prism_{R/k}t^{-i}$ for each $i\in \ZZ$.
\end{lemma}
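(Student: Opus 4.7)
The plan is to reduce to the classical result of Bhatt--Lurie, \cite[Corollary 5.2.15]{APC}, via a translation into the graded Rees-algebra picture developed in \cite[\S 4]{dalggauges}.

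First, I would observe that both $R \mapsto \rees(\fil^\bullet_\nyg F^*\Prism_{R/k})$ and $R \mapsto \rees(\fil^\bullet_{(p)} \Prism_{R/k})$ are functors $\aring_k \to \Gr\dalg^\pcomp_{\wkut}$ preserving sifted colimits: the source does because the Nygaard filtration is constructed by left Kan extension from large quasisyntomic $k$-algebras in the proof of \cite[Proposition 5.1.1]{APC}, and the target does because $\Prism_{-/k}$ preserves colimits in animated algebras (\cite[Remark 4.1.8]{APC}). Since the property of a natural transformation being a $p$-complete localization at a single element is preserved under sifted colimits of graded $p$-complete derived $\wkut$-algebras, it suffices to verify the claim when $R$ is a large quasisyntomic $k$-algebra, in which case everything becomes discrete.

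Second, for such $R$ the ungraded content of the assertion is exactly \cite[Corollary 5.2.15]{APC}: the relative Frobenius $\varphi$ exhibits $\Prism_{R/k}$ as the $p$-complete localization of the Nygaard filtered $F^*\Prism_{R/k}$ at a single distinguished element coming from the Rees parameter associated to the structure map $\wkut \to \rees(\fil^\bullet_\nyg F^*\Prism_{R/k})$. Explicitly, in the degree-zero slice, inverting this element identifies the colimit
$$\colim_k\bigl(\fil^k_\nyg F^*\Prism_{R/k} \xrightarrow{\cdot p} \fil^{k+1}_\nyg F^*\Prism_{R/k}\bigr)$$
with $\Prism_{R/k}$ via the divided Frobenius $\varphi_k = \varphi/p^k$. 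The $\wkut$-linearity of the map then forces the identification in every graded degree, recovering the target Rees algebra $\widehat{\bigoplus_i}(p)^i\Prism_{R/k}t^{-i}$.

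Third, I would package this degree-wise identification into a morphism of graded $p$-complete derived $\wkut$-algebras and confirm that it agrees, via the universal property of the localization, with the Frobenius-induced map of Rees algebras. Here the equivalence $\fil\dalg^\pcomp_{\fil_{(p)}W(k)} \simeq \Gr\dalg^\pcomp_{\wkut}$ of \cite[\S 4]{dalggauges} ensures that the graded $p$-complete single-element localization in the Rees picture corresponds to the underlying filtered statement coming from \cite{APC}. The main technical obstacle is the coherent interaction of the $p$-completion with inverting a homogeneous element and tracking its effect across degrees; once this bookkeeping is carried out, the substantive mathematical content is exactly the Bhatt--Lurie result for the underlying ungraded relative Frobenius.
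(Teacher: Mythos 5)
Your overall strategy (reduce by Kan extension/colimit-preservation to a generating subcategory, then identify the degree-zero slice of the localization at $u=pt^{-1}$ with a colimit of divided Frobenii) is the same shape as the paper's argument, but the base case of your reduction is where the actual content lives, and there it does not go through as written. You reduce to \emph{large quasisyntomic} $k$-algebras and assert that the resulting statement ``is exactly \cite[Corollary 5.2.15]{APC}.'' That corollary is a statement about ($p$-completely) smooth algebras of finite relative dimension $d$: it gives the stabilization $(p)\otimes\fil^{i}_\nyg\simeq\fil^{i+1}_\nyg$ for $i\geq d$. It does not apply to quasiregular semiperfect(oid) algebras, whose cotangent complex is an infinite-rank flat module in degree $1$, so the colimit $\colim_i\bigl(\fil^i_\nyg\xrightarrow{p}\fil^{i+1}_\nyg\bigr)$ never stabilizes and the identification of its $p$-completion with $\Prism_{R/k}$ via $\varphi/p^i$ requires a separate argument (for qrsp rings this is essentially part of \cite[Theorem 5.5.10]{Bhatt22}, not of \cite[Corollary 5.2.15]{APC}). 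As written, your second paragraph mostly restates the lemma and attributes it to a citation that does not contain it.

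The paper instead reduces to polynomial algebras in $d$ variables, where \cite[Corollary 5.2.15]{APC} \emph{does} apply and shows the colimit stabilizes at the $d$-th stage; this reduces the lemma to the single assertion that $\fil^d(\varphi)\colon\fil^d_\nyg F^*\Prism_{R/k}\to p^d\Prism_{R/k}$ is an isomorphism. Proving that is the substantive step, and it is entirely missing from your proposal: one checks it mod $p$, identifies the reduction with the tautological map $\fil^d_{\conj}\HTp_{R/k}\to\HTp_{R/k}$ via \cite[Remark 5.1.2]{APC}, and concludes from exhaustiveness of the conjugate filtration together with the Hodge--Tate comparison $\gr^{d+i}_{\conj}\HTp_{R/k}\simeq L\widehat{\Omega}^{d+i}_{R/k}[-d-i]=0$ for $i\geq 1$. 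To repair your write-up, either switch the reduction to polynomial algebras and supply this argument, or, if you insist on reducing to qrsp algebras, supply a genuine proof that for such $S$ the $p$-completed colimit of $\fil^i_\nyg=\varphi^{-1}(p^i\Prism_S)$ along multiplication by $p$ is $\Prism_S$ --- neither is a formal consequence of the references you cite.
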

\begin{proof}
First, we may test this at the level of the underlying $E_\infty$-rings.
   Now, since localization commutes with colimits, by left Kan extensions we may reduce to the case where $R=W(k)[x_1,\ldots, x_d]^\wedge_p$. In this case, as the element $u=pt^{-1}\in \wkut$ is invertible in the target whence we are reduced to checking that the induced morphism 

   $$(\widehat{\bigoplus}_{i\in \ZZ}\fil^i_\nyg F^*\Prism_{R/k}t^{-i})_{u}\to   \widehat{\bigoplus}_{i\in \ZZ} p^i \Prism_{R/k}t^{-i}$$ is an isomorphism. 

   The source and target are graded algebras over the graded ring $\wkut[1/u]\simeq W(k)\langle u, \frac{p}{u}\rangle\simeq W(k)\langle u, u^{-1}\rangle$ and so the morphism is determined by its degree $0$-part.

   On the source we have in degree $0$
   $$(\widehat{\bigoplus}_{i\in \ZZ}\fil^i_\nyg F^*\Prism_{R/k}t^{-i})_{u}^0=\colim(F^*\Prism_{R/k}\xrightarrow{pt^{-1}} \fil^1_\nyg F^*\Prism_{R/k}t^{-1}\xrightarrow{pt^{-1}} \fil^2_\nyg F^*\Prism_{R/k}t^{-2}\xrightarrow{pt^{-1}} \ldots) $$ and similarly on the target we have

    $$(\widehat{\bigoplus}_{i\in \ZZ}\fil^i_\nyg p^i\Prism_{R/k}t^{-i})^0=\colim(\Prism_{R/k}\xrightarrow{pt^{-1}} p\Prism_{R/k}t^{-1}\xrightarrow{pt^{-1}} p^2 \Prism_{R/k}t^{-2}\xrightarrow{pt^{-1}} \ldots),$$ where each of the transition maps in the second diagram are isomorphism. The relative Frobenius induces a morphism between the two diagrams above.

    Now the first diagram stabilizes at $i=d$ by \cite[Corollary 5.2.15.]{APC} whence it suffices to prove that the relative Frobenius $$\fil^d(\varphi)\colon \fil^d_\nyg F^*\Prism_{R/k}\to p^d \Prism_{R/k}$$ is an isomorphism. This is proved in the proof of \cite[Corollary 5.2.16]{APC}, but we recall their proof for the reader's convenience. 
    Since both the source and target are $p$-complete it suffices to show check the isomorphism mod $p$. 
    By  \cite[Corollary 5.2.15.]{APC} we may identify $(p)\otimes \fil^{d}_\nyg\Prism_{R/k}\simeq \fil^{d+1}_\nyg\Prism_{R/k} $, whence  $\fil^d_\nyg\Prism_{R/k}\qq p$ agrees with $\gr^d_{\nyg}\Prism_{R/k}$ and so the mod $p$-reduction of the relative Frobenius is given by $$\gr^d(\varphi)\colon \gr^d_\nyg \Prism_{R/k}\to \HTp_{R/k}.$$ By \cite[Remark 5.1.2]{APC}, this map identifies with the tautological map $$\fil^d_{\conj} \HTp_{R/k}\to \HTp_{R/k}.$$
    Since the conjugate filtration is exhaustive, it suffices to prove that $\gr^{d+i}_{\conj} \HTp_{R/k}$ vanishes for $i\geq 0.$ This is a consequence of the Hodge-Tate comparison
    $$\gr^{d+i}_{\conj} \HTp_{R/k}\simeq L\widehat{\Omega}^{d+i}_{R/k}[-d-i]$$ and the fact that the right hand side is zero when $i\geq 1$ as $R=W(k)[x_1,\ldots, x_d]^\wedge_p.$

\end{proof}

\begin{remark}\label{remark: specialization preserve derived algebras}
    Note that all the specializations in Lemma \ref{lem: functorialities for rees cons} preserve derived algebra structures as they are obtained by (completed) base changes.
\end{remark}

\begin{construction}[The Rees stack $\rnyg{R}$]\label{cons: rees stack of rnyg}
For any animated $k$-algebra $R$, we may consider $\rees(\fil_\nyg \Prism_{R/k})\in \dalg(k^\nyg)$ via the equivalences explained in Construction \ref{cons: rees cons for filnyg}. Then we define the stack $$\uspf_{k^\nyg}(\rees(\fil_\nyg F^*\Prism_{R/k})):=\maps_{\dalgk}(\rees(\fil_\nyg F^*\Prism_{R/k}),-)$$ and for ease of notation 
$$\cR_\nyg(F^*\Prism_{R/k}):=\uspf_{k^\nyg}(\rees(\fil_\nyg F^*\Prism_{R/k})).$$

Note that $k^\nyg$ has an affine diagonal, whence any affine scheme $f\colon \spec (A)\to k^\nyg$ can be incarnated as a connective derived algebra $f_*\cO_{\spec(A)}\in \dalg^\cn(k^\nyg)$ and so it makes sense to think of $\rnyg{R}$ as a stack over $k^\nyg$. Note that $\cR_\nyg(F^*\Prism_{R/k})$ is a $p$-adic formal stack as the structure morphism  $\rnyg{R}\to \spec(\ZZ)$ factors over $\spf(\ZZ_p)\to \spec(\ZZ).$ 
    
\end{construction}

For any animated $k$-algebra $R$ we may specialize the Rees stack $\rnyg{R}$ on the various substacks of $k^\nyg.$ It is therefore useful to name the restrictions as we do so in the next construction.

\begin{construction}[Specializations of the Rees stack $\rnyg{R}$]\label{cons: specialisation of rees stacks}
Fix an animated $k$-algebra $R$. For the various identifications of $\dalg$ on prestacks we refer the reader to \cite[\S~4]{dalggauges}.
Using the notation in the statement of Lemma \ref{lem: functorialities for rees cons} we have

\begin{enumerate}
    \item Over $\spf(W(k))$ we have $F^*\Prism_{R/k}\in \dalg(\spf(W(k))=\dalg^{\pcomp}_{W(k)}$ and we set $$\uspf(F^*\Prism_{R/k})=\Maps_{\dalg(\spf(W(k))}(F^*\Prism_{R/k},-).$$

    \item In the same situation as the previous item we set $$\uspf(\Prism_{R/k})=\Maps_{\dalg(\spf(W(k))}(\Prism_{R/k},-).$$

    \item We may consider $\rees(\fil_{\hodge} \dR_{R/k})\in \dalg(\agm)=\fil\dalg_{k}$ and set $$\rhge{R}:=\uspec_{\agm}(\rees(\fil_{\hodge} \dR_{R/k}))$$ where the right hand side again is $\Maps_{\dalg(\agm)}(\rees(\fil_{\hodge} \dR_{R/k}),-).$

    \item In the same situation as before we define $$\rconj{R}:=\uspec_{\agm}(\rees(\fil_{\conj} \HTp_{R/k})).$$

    \item We may consider $\HTp_{R/k}\in \dalg(\spec(k))=\dalg_k$ and set $$\uspec(\HTp_{R/k})=\maps_{\dalg(\spec(k))}(\HTp_{R/k},-).$$

    \item We may consider $\bigwedge^* L_{R/k}[-*]\in \dalg(\bgm)=\Gr\dalg_k.$
    Then we set 
    $$\uspec_{\bgm}(\bigwedge^* L_{R/k}[-*]):=\maps_{\dalg(\bgm)}(\bigwedge^* L_{R/k}[-*],-).$$

\end{enumerate}
    
\end{construction}

We next have the obligatory lemma recording the behaving of $\rnyg_{R}$ by pullback to the substacks of \S~\ref{sec: cons strat of knyg}.

\begin{lemma}
Pullbacks of $\rnyg{R}$ along the loci of $\knyg$ is given as follows
\begin{enumerate}
    \item $\rnyg{R}$ pulls back to $\uspf(F^*\Prism_{R/k})$ along the loci $\spf(W(k))=k^\crys\into k^\nyg$.
  \item $\rnyg{R}$ pulls back to $\uspf(\Prism_{R/k})$ along the loci $\spf(W(k))=k^\Prism\into k^\nyg$.
  \item $\rnyg{R}$ pulls back to $\rhge{R}$ along the loci $\agm=k^{\dR,+}\into k^\nyg$.
  \item $\rnyg{R}$ pulls back to $\rconj{R}$ along the loci $\agm=k^{\HT,\conj}\into k^\nyg$.
  \item $\rnyg{R}$ pulls back to $\uspec(\HTp_{R/k})$ along the loci $\spec(k)=k^\HT\into k^\nyg$.
  \item $\rnyg{R}$ pulls back to $\uspec_{\bgm}(\bigwedge^* L_{R/k}[-*])$ along the loci $\bgm=k^{\hodge}\into k^\nyg$.
    
\end{enumerate}

\end{lemma}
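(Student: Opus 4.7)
The strategy is to reduce each of the six claims to a combination of two ingredients: (a) a base change property for the relative spectrum construction $\uspec$ (respectively $\uspf$), and (b) the explicit specialisations of $\rees(\fil_\nyg F^*\Prism_{R/k})$ along the various loci of $\wkut$ recorded in Lemma \ref{lem: functorialities for rees cons}.

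First I would state and verify the base change principle: for any morphism $j \colon S' \to S$ of stacks with affine diagonal and any $\cA \in \dalg(S)$, there is a canonical isomorphism of $S'$-stacks
$$\uspec_{S}(\cA)\times_{S} S' \simeq \uspec_{S'}(j^*\cA),$$
where $j^*\cA \in \dalg(S')$ denotes the pullback derived algebra obtained by applying the symmetric monoidal pullback $j^* \colon \qcoh(S) \to \qcoh(S')$. This is a formal consequence of the mapping-space definition of $\uspec$ and the universal property of the pullback, once one recalls that $\dalg(-)$ is contravariantly functorial via pullback along morphisms of stacks (which follows from the fact that both sides are defined by right Kan extension from affines). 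The $(p,I)$-adic formal analogue supplies the corresponding $\uspf$ version used in items (1) and (2).

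With this in hand, each item becomes a direct unpacking. For item (1), applying the principle to the open immersion $j \colon \spf(W(k)) = k^{\crys} \into k^\nyg$ corresponding to the locus $t \neq 0$, and invoking Lemma \ref{lem: functorialities for rees cons}(1) to identify $j^*\rees(\fil_\nyg F^*\Prism_{R/k}) \simeq F^*\Prism_{R/k}$ in $\dalg(\spf(W(k))) \simeq \dalg^{\pcomp}_{W(k)}$, yields $\rnyg{R} \times_{k^\nyg} \spf(W(k)) \simeq \uspf(F^*\Prism_{R/k})$. Items (2)--(6) follow in the same manner: in each case I identify the closed or open immersion of the target locus as the relevant specialisation of $\wkut$ (as listed in Constructions \ref{const: cryst}--\ref{const: hodge}), apply the base change principle, and invoke the corresponding part of Lemma \ref{lem: functorialities for rees cons} to identify the pulled-back derived algebra with the object used in Construction \ref{cons: specialisation of rees stacks}.

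The only point requiring real care -- and what I expect to be the main (if mild) obstacle -- is a compatibility check: the equivalences of $\dalg$-categories on the various loci recalled from \cite[\S~4]{dalggauges}, for instance $\dalg(\agm) \simeq \fil\dalg_k$ and $\dalg(\bgm) \simeq \Gr\dalg_k$, must be intertwined with the pullback functors induced by the inclusions of these loci into $k^\nyg$, the latter understood via the graded incarnation $\dalg(k^\nyg) \simeq \Gr\dalg^{\pcomp}_{\wkut}$. This compatibility is implicit in the phrasing of Lemma \ref{lem: functorialities for rees cons}, whose specialisations are stated in exactly the target categories for the respective loci. Once this bookkeeping is unpacked, no further input is required and the lemma follows.
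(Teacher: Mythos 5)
Your proposal is correct and follows essentially the same route as the paper: the paper's proof likewise reduces everything to the fact that the specialisations in Lemma \ref{lem: functorialities for rees cons} are implemented by base change of derived algebras, combined with the definitions in Construction \ref{cons: specialisation of rees stacks} of the target stacks as the functors corepresented by exactly those base changes. Your explicit formulation of the base change compatibility $\uspec_{S}(\cA)\times_{S} S' \simeq \uspec_{S'}(j^*\cA)$ and the bookkeeping remark about the $\dalg$-equivalences on the various loci are left implicit in the paper but are the same underlying argument.
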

\begin{proof}
    These are immediate consequences of Lemma \ref{lem: functorialities for rees cons} and Construction \ref{cons: specialisation of rees stacks}.
    Indeed, note that each of the specializations in \ref{lem: functorialities for rees cons}  are implemented by base change of the corresponding derived algebras. The definitions of the stacks of Construction \ref{cons: specialisation of rees stacks} as functors corepresented by the base changes gives the lemma.
\end{proof}

We next want to make contact with the quasi-regular semiperfectoid case of Bhatt's notes \cite[\S~5]{Bhatt22}. In characteristic $p$ these are the simplest class of ind-syntomic algebras with enough $p$-power roots to make de Rham type cohomological invariants discrete.

\begin{definition}\cite[Definition 8.8]{BhattMorrowScholze2019THHandPadicHodge}\label{definition of qrsp}
We say that an $\FF_p$-algebra $A$ is semiperfect if the Frobenius on $A$ is surjective. A semiperfect algebra is quasiregular if $L_{A/\FF_p}$ is a flat $A$-module concentrated in degree $1.$ 
\end{definition}

\begin{remark}
    Note that an $\FF_p$-algebra $A$ is quasiregular semiperfect if and only if it is quasiregular semiperfectoid in the sense of \cite[Definition 4.20]{BhattMorrowScholze2019THHandPadicHodge}. This is an easy consequence of the definition of quasiregular semiperfectoid along with the fact that the a priori $\ZZ_p$-algebra $A$ admits a morphism $\FF_p\to A$ along with the vanishing of $\Omega^1_{A/\FF_p}.$
\end{remark}

The key observation for us is the following. 

\begin{construction}\label{Rees construction}(Rees construction for $F^*\Prism_{A/k}$ for quasiregular semiperfect $A$.)

If $A$ is quasiregular semiperfect then, as explained in \cite[\S~5.5.1]{Bhatt22}, its absolute prismatic cohomology $\Prism_A$ is discrete and carries a discrete Nygaard filtration $\fil_\nyg \Prism_{A}.$

If $A$ admits a map $k\to A$, then by \cite[Theorem 5.6.2]{APC}, there is a canonical isomorphism $$\fil_\nyg \Prism_{A}\simeq \fil_\nyg F^*\Prism_{A/k},$$ and so its relative Nygaard filtered prismatic cohomology with respect to $k$ is also discrete. 
Now since the Rees construction is functorial, there is also an induced isomorphism $$\rees(\fil_\nyg \Prism_{A})\simeq \rees (\fil_\nyg F^*\Prism_{A/k})$$ of discrete graded $p$-complete $\wkut$-algebras. Thus we get a classically affine morphism $$\spf(\rees (\fil_\nyg F^*\Prism_{A/k}))/\G_m\to k^\nyg$$ which agrees with the Rees stack of \cite[Definition 5.5.2 ]{Bhatt22}.
\end{construction}

\begin{lemma}\label{lem: uspf is gm}
Let $T$ be a discrete derived algebra in $\dalgk.$ Then the $k^\nyg$-stack $\rnyg{T}\to k^\nyg$ agrees with $\spf (T)/\GG_m$ where the $\GG_m$-action is induced on $T$ by identifying the heart of $\qcoh(k^\nyg)$ with $p$-complete graded $W\langle u,t\rangle/(ut-p)$-modules.
\end{lemma}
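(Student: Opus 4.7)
The plan is to exploit the equivalence of $\infty$-categories $\dalg(k^\nyg) \simeq \Gr\dalg^{\pcomp}_{\wkut}$ recalled in Construction \ref{cons: rees cons for filnyg}. Under this equivalence, a discrete (in the standard $t$-structure) derived algebra $T \in \dalg(k^\nyg)$ corresponds to a classical $p$-complete graded $\wkut$-algebra, which I will abusively continue to denote by $T$.

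First I would unfold the left-hand side on an affine test $f \colon \spec(A) \to k^\nyg$. Since $k^\nyg$ has affine diagonal, pushforward yields $f_*\cO_{\spec(A)} \in \dalg^\cn(k^\nyg)$; under the equivalence this corresponds to a connective $p$-complete graded $\wkut$-algebra $\tilde{A}$. Concretely $\spec(\tilde{A}) \to \spec(A)$ is the $\GG_m$-torsor obtained by pulling back the presentation $\spf(\wkut) \to k^\nyg$ along $f$. From the very definition of $\rnyg{T} = \uspf_{k^\nyg}(T)$ we obtain
$$
\rnyg{T}(\spec(A)) \;=\; \Maps_{\dalg(k^\nyg)}\bigl(T,\, f_*\cO_{\spec(A)}\bigr) \;\simeq\; \Maps_{\Gr\dalg^{\pcomp}_{\wkut}}(T, \tilde{A}).
$$

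Next I would unfold the right-hand side. By the universal property of the quotient $\spf(T)/\GG_m \to \spf(\wkut)/\GG_m = k^\nyg$, a lift of $f$ to $\spf(T)/\GG_m$ is precisely the datum of a $\GG_m$-equivariant map of affine $\GG_m$-torsors $\spec(\tilde{A}) \to \spf(T)$ over $\spf(\wkut)$, equivalently, a map of graded $p$-complete $\wkut$-algebras $T \to \tilde{A}$. This identifies $(\spf(T)/\GG_m)(\spec(A))$ with the same mapping space appearing on the left, naturally in $f$, yielding the required equivalence of stacks over $k^\nyg$. The only mild obstacle is bookkeeping: one must verify that the graded $\wkut$-module description of $f_*\cO_{\spec(A)}$ coming from \cite[\S~4]{dalggauges} actually matches the Rees/torsor picture just described. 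This is the standard compatibility between the $\GG_m$-quotient presentation of $k^\nyg$ and the induced grading on quasi-coherent sheaves, together with the observation that, for a discrete $T$, the derived mapping space $\Maps_{\Gr\dalg^{\pcomp}_{\wkut}}(T, -)$ agrees tautologically with the graded algebra Hom used on the $\spf/\GG_m$ side; no genuinely derived input is needed beyond the categorical equivalence itself.
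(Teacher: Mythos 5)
Your proof is correct and amounts to the same underlying mechanism as the paper's, but packaged differently. The paper argues on the \emph{representing object}: it observes that $\wpt\to T$ is a graded map, hence $\spf(T)\to\spf(\wpt)$ is $\GG_m$-equivariant, so by flat descent of affine morphisms $\pi\colon \spf(T)/\GG_m\to k^\nyg$ is a (classically) affine morphism, and then identifies $\pi_*\cO_{\spf(T)/\GG_m}$ with $T$ under $\qcoh^\heartsuit(k^\nyg)\simeq\{$graded $p$-complete $\wpt$-modules$\}$; the identification with $\rnyg{T}=\uspf_{k^\nyg}(T)$ then follows from the adjunction of Recollection \ref{rec: stacks adjunction}. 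You instead evaluate both functors on a test object $\spec(A)\to k^\nyg$ and match the two mapping spaces directly; this is a functor-of-points unwinding of the same adjunction, and both arguments rest on the same compatibility between the $\GG_m$-torsor presentation $\spf(\wpt)\to k^\nyg$ and the graded description of $\dalg(k^\nyg)$ from \cite[\S~4]{dalggauges}. One caution about your closing remark: for a discrete $T$ and a connective but non-discrete $\tilde{A}$, the mapping space $\Maps_{\Gr\dalg^{\pcomp}_{\wkut}}(T,\tilde{A})$ is \emph{not} in general the discrete set of classical graded ring homomorphisms (it can have higher homotopy governed by the cotangent complex of $T$), so the "tautological" agreement you invoke is only valid because the $\spf(T)/\GG_m$ side is itself a derived stack whose $\tilde{A}$-points are the same derived mapping space $\Maps_{\aring}(T,\tilde{A})\simeq\Maps_{\dalg^{\cn}}(T,\tilde{A})$; phrased that way the step is fine, but it would be wrong if either side were read as a classical Hom-set.
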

\begin{proof}
    We may assume we are in the classical situation i.e. of classical stacks over discrete commutative rings. The structure morphism $\wpt\to T$ is a graded morphism of $p$-complete graded rings. Thus the induced morphism $\spf T\to \spf \wpt$ intertwines the $\GG_m$-action on the source and target. By flat descent of affine morphisms of classical formal schemes, this gives us a classically affine morphism $\pi\colon \spf T/
    \GG_m\to \wpt/\GG_m=k^\nyg.$ It suffices to identify the discrete algebra $\pi_*\cO_{\spf T/\GG_m}$ algebra. But identifying $\qcoh^\heartsuit(k^\nyg)$ with graded $\wpt$ modules, this functor just views $T$ as a graded $\wpt$-algebra hence proving the result. 
\end{proof}

\begin{remark}\label{rem: uspf is gm}
    It follows from Lemma \ref{lem: uspf is gm} that when $A$ is a quasiregular semiperfect algebra receiving a map from $k$, then the $\uspf_{k^\nyg}$-construction of Construction \ref{cons: rees stack of rnyg} agrees with the classical quotient stack of Construction \ref{Rees construction}. 
\end{remark}

\begin{recollection}[Adjunction for stacks over $k^\nyg.$]\label{rec: stacks adjunction}
    Let $f\colon X\to k^\nyg$ be any stack. Then $f_*\cO_X\in \dalgk$. 
    
    Let $A\in \dalgk$ be any derived algebra. Then there is an adjunction 
    $$\Maps_{\mathrm{Stacks}_{k^\nyg}}(X, \spf_{k^\nyg}(A))\simeq \Maps_{\dalgk}(A,f_*\cO_X).$$

    This adjunction can be deduced by the argument in the proof of \cite[Proposition 3.2]{MM24} where by taking colimits in the category of stacks over $k^\nyg$ one reduces to the case of an affine scheme over $k^\nyg$ and whence, by the affine diagonal of $k^\nyg$, it follows from definition.
\end{recollection}

\begin{construction}[Construction of the comparison map.]\label{cons: nyg can map}
    Let $f\colon \spec(R)\to \spec(k)$ be a smooth map of  affine schemes and let $f^\nyg\colon R^\nyg\to k^\nyg$ be the associated map on the derived stacks. Then $f_*^\nyg\cO_{R^\nyg}=\fil_\nyg F^*\Prism_{R/k}$ as derived algebras. This follows from the proof of \cite[Theorem 3.3.5]{Bhatt22} as commutative algebras wherein the proof as derived algebras can be deduced by noting that all the operations in the proof preserve derived algebras. 
    Now the the functor $\fil_\nyg \Prism_{-/k}$ is left Kan extended from finitely generated polynomial $k$-algebras by \cite[Proposition 5.1.1]{APC}.
    Therefore by the adjunction of Recollection \ref{rec: stacks adjunction} and the universal property of left Kan extensions spelled out in Construction \ref{cons: can map}, we obtain a natural map $$\eta_R\colon R^\nyg\to \cR_{\nyg}(F^*\Prism_{R/k})$$
of stacks over $k^\nyg.$
\end{construction}

\begin{theorem}[Bhatt-Lurie]\label{thm: eta is isomorphism when qrsp}
    When $A$ is a quasi-regular semiperfect algebra then the canonical map $\eta_A$ of Construction \ref{cons: nyg can map} is an isomorphism.
\end{theorem}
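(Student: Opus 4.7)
The plan is to reduce both sides of $\eta_A$ to the same classical Rees quotient stack over $k^\nyg$, exploiting the fact that for quasiregular semiperfect $A$ all relevant objects are discrete.

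First, I would recall (as in Construction \ref{Rees construction}) that for quasiregular semiperfect $A$ receiving a map from $k$, the Nygaard filtered prismatic cohomology $\fil_\nyg \Prism_A$ is concentrated in degree zero by \cite[\S~5.5.1]{Bhatt22}, and by the Bhatt--Lurie comparison \cite[Theorem 5.6.2]{APC} it is canonically identified with $\fil_\nyg F^* \Prism_{A/k}$. Consequently $T := \rees(\fil_\nyg F^* \Prism_{A/k})$ is a discrete graded $p$-complete $W\langle u,t\rangle/(ut-p)$-algebra. Applying Lemma \ref{lem: uspf is gm} and Remark \ref{rem: uspf is gm} then identifies the target of $\eta_A$ with the classical Rees quotient stack
\[
\cR_\nyg(F^*\Prism_{A/k}) \;\simeq\; \spf(T)/\G_m,
\]
where the $\G_m$-action comes from the grading.

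Next, on the source side, I would invoke the identification of $A^\nyg$ for quasiregular semiperfect $A$ with the classical Rees stack $\spf(\rees(\fil_\nyg \Prism_A))/\G_m$ as established in \cite[\S~5.5]{Bhatt22} (this is essentially the content of \cite[Proposition 5.5.10]{Bhatt22} or its analog). Combined with the discreteness identification of the preceding paragraph, this gives a canonical isomorphism of classical $k^\nyg$-stacks between $A^\nyg$ and $\spf(T)/\G_m$.

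It then remains to verify that the map $\eta_A$ of Construction \ref{cons: nyg can map} agrees with the composite of the two identifications above. Unwinding the construction, $\eta_A$ corresponds under the adjunction of Recollection \ref{rec: stacks adjunction} to the natural map $\alpha_A : \fil_\nyg F^* \Prism_{A/k} \to f^{A^\nyg}_* \cO_{A^\nyg}$ built via left Kan extension from the smooth case, where it is an isomorphism by \cite[Theorem 3.3.5]{Bhatt22}. So the content to check is that $\alpha_A$ remains an equivalence for quasiregular semiperfect $A$, which follows by computing $f^{A^\nyg}_* \cO_{A^\nyg}$ directly from the classical Rees presentation: taking $\G_m$-invariants of $\cO_{\spf(T)}$ recovers the underlying filtered object, and this filtered object is $\fil_\nyg F^*\Prism_{A/k}$ by Bhatt--Lurie.

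The main obstacle is ensuring that the two independently constructed isomorphisms---namely $\eta_A$, built by an adjunction from left Kan extension on smooth polynomial algebras, and the direct Bhatt--Lurie identification of $A^\nyg$ as a classical Rees quotient---are indeed the same equivalence; this is a naturality/compatibility check rather than a computation, and it pins down the entire proof.
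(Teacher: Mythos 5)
Your proposal is correct and follows essentially the same route as the paper: the paper's proof is a one-line citation of Construction \ref{Rees construction}, Lemma \ref{lem: uspf is gm}, and Remark \ref{rem: uspf is gm} to identify the target with the classical Rees quotient $\spf(T)/\G_m$, at which point the statement becomes \cite[Theorem 5.5.10]{Bhatt22}. Your closing paragraph on matching $\eta_A$ with the Bhatt--Lurie identification is a reasonable naturality check that the paper leaves implicit (and the paper's footnote even acknowledges a small gap in the cited argument, deferred to later work).
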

\begin{proof}
    After Lemma Construction \ref{Rees construction}, Lemma \ref{lem: uspf is gm} and Remark \ref{rem: uspf is gm}, this is \cite[Theorem 5.5.10 ]{Bhatt22}.\footnote{As written, it seems there is a slight gap in the justification of item (4) of \emph{loc.cit.} A different proof was communicated to the author by Bhargav Bhatt and will appear in \cite{Sah25Nyg}.}
\end{proof}

\begin{remark}
    It should be possible to prove Theorem \ref{thm: intro Main theorem} directly from Theorem \ref{thm: eta is isomorphism when qrsp} by proving carefully that the functor $R\mapsto \rnyg{R}$ satisfies quasisyntomic descent. 
\end{remark}

\section{Proof of de Rham affineness of the Nygaard filtered prismatization in positive characteristic}\label{sec: de Rham affine proof}

In this section we prove that the canonical map $$\eta_R\colon R^\nyg\to \cR_\nyg(F^*\Prism_{R/k})$$ of Construction \ref{cons: nyg can map} is an isomorphism. 

We begin first with two technical lemmas needed in the proof.

\begin{lemma}\label{lem: equiv of fpqc and p-fpqc}
  A morphism $R\to S$ of $p$-nilpotent animated rings is a faithfully flat morphism if and only if it is $p$-completely faithfully flat.   
\end{lemma}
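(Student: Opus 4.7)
The plan is to exploit the fact that $p$-nilpotence of both $R$ and $S$ makes $p$-completion a no-op, so the equivalence reduces to a dévissage statement along powers of $p$. Fix $N$ with $p^N = 0$ in $\pi_0(R)$ (and hence also in $\pi_0(S)$). Recall that $R \to S$ is flat iff for every discrete $\pi_0(R)$-module $M$, the derived tensor product $M \otimes_R^L S$ is concentrated in degree $0$, and it is faithfully flat iff additionally $M \otimes_R^L S \neq 0$ whenever $M \neq 0$. By \cite[\S 2]{dalggauges}, the $p$-completely (faithfully) flat condition admits an analogous characterization after replacing $R$ by $R/p$ and restricting to discrete $R/p$-modules.

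The forward direction is immediate: flatness and faithful flatness are preserved under the base change $R \to R \qq p = R/p$, so any flat (resp.\ faithfully flat) morphism of $p$-nilpotent animated rings is automatically $p$-completely flat (resp.\ faithfully flat).

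For the converse, I would use the finite $p$-adic filtration. Given any discrete $\pi_0(R)$-module $M$, the filtration $M \supset pM \supset p^2 M \supset \cdots \supset p^N M = 0$ has graded pieces $p^i M / p^{i+1} M$ that are naturally $\pi_0(R)/p$-modules, and hence discrete $R/p$-modules. By the hypothesis that $R \to S$ is $p$-completely flat, $S \otimes_R^L (p^i M / p^{i+1} M)$ is discrete for each $i$; by iterating the cofiber sequences $p^{i+1}M \to p^i M \to p^i M / p^{i+1} M$ together with the long exact sequence on homotopy groups, I conclude that $S \otimes_R^L M$ is discrete, establishing flatness of $R \to S$. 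For faithful flatness, apply the same filtration to a nonzero $M$: letting $i$ be maximal with $p^i M \neq 0$ gives a nonzero $R/p$-module $p^i M$, which remains nonzero after tensoring with $S$ by $p$-completely faithful flatness. A descending induction on the filtration using the cofiber sequences then forces $S \otimes_R^L M \neq 0$.

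The main (mild) obstacle is the bookkeeping between the discrete $p$-adic filtration on $\pi_0(M)$ and the animated module structure; this is harmless because the filtration is finite (length $\leq N$), so all the exact triangle arguments terminate after finitely many steps and no completion issues intervene.
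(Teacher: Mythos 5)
Your proof is correct, and it rests on the same basic mechanism as the paper's: $p$-nilpotence collapses the $p$-complete conditions to the ordinary ones. The execution of the converse differs, though. The paper works with the formulation of $p$-complete flatness in terms of discrete $p$-torsion modules and simply observes that over a $p$-nilpotent ring \emph{every} discrete module is $p$-torsion, so the two flatness conditions coincide definitionally with no dévissage; for faithfulness it lifts faithful flatness of $\pi_0(S)/p$ over $\pi_0(R)/p$ along the nilpotent surjection $\pi_0(R)\to\pi_0(R)/p$ (surjectivity on $\mathrm{Spec}$ is insensitive to nilpotents). You instead start from the mod-$p$ characterization and run an explicit finite dévissage along $M\supset pM\supset\cdots\supset p^NM=0$; this is in effect a proof of the equivalence of the two characterizations in the nilpotent case, so it is a bit longer but self-contained. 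Your faithfulness step also goes through: once flatness is established, the tensored filtration consists of injections of discrete modules, so nonvanishing of $S\otimes_R^L p^iM$ for the maximal $i$ with $p^iM\neq 0$ propagates down to $S\otimes_R^L M\neq 0$. Both routes prove the lemma; the paper's is shorter because it leans on the torsion-module formulation, yours is preferable if one only has the mod-$p$ formulation of $p$-complete flatness at hand.
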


\begin{proof}
   The forward direction is clear. For the converse that if $A$ is an animated ring and $f\subset \pi_0A$, then a module $M\in \Mod_A$ is $f$-completely flat if for any discrete $f$-torsion module $N$ one has that $M\otimes_A N$ is also discrete. Further it is faithfully flat iff $\pi_0M/f$ is faithfully flat over $\pi_0A/f$.  
   If $A$ itself is $f$-nilpotent i.e. $f^N=0$ in $\pi_0(A)$ then as recalled in \ref{flatness and t-exactness} $f$-complete flatness is equivalent to $M$ itself being flat over $A$ as $\Mod_A$ is entirely $f$-torsion. For faithfulness, we may lift faithful flatness of $\pi_0M/f$ over $\pi_0A/f$ along the nilpotent surjection $\pi_0A/f^N\to \pi_0A/f$ by \cite[\href{https://stacks.math.columbia.edu/tag/00HP}{Tag 00HP}]{stacks-project}.
\end{proof}

\begin{lemma}\label{lem: detecting fpqc covers}
    Let $f\colon X\to Y$ be a morphism of $p$-adic formal stacks. Assume that for any $p$-nilpotent point $\eta\colon \spec T\to Y$, the base change $\spec (T)\times_Y X$ is representable by an animated ring and $p$-completely faithfully flat over $\spec T$. Then $f\colon X\to Y$ is an effective epimorphism for the fpqc topology of $p$-nilpotent animated rings.
\end{lemma}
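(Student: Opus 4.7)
The plan is to unwind the condition of being an effective epimorphism in the $\infty$-topos of fpqc hypersheaves on $p$-nilpotent animated rings, and then invoke the hypothesis together with Lemma~\ref{lem: equiv of fpqc and p-fpqc}. Recall that a morphism $f\colon X\to Y$ in an $\infty$-topos is an effective epimorphism if and only if the induced map on sheaves of connected components is surjective as a map of sheaves of sets. Concretely, in our setting this amounts to showing that for every $p$-nilpotent animated ring $T$ and every $\eta\in Y(T)$ (equivalently, every map $\eta\colon \spec T\to Y$), there exists an fpqc cover $\spec T'\to \spec T$ in $p$-nilpotent animated rings such that the pulled-back point $\eta|_{T'}$ lifts, up to homotopy, to a point of $X(T')$.

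Given such an $\eta$, the strategy is to take $T'$ to be the animated ring representing $\spec T\times_Y X$, which exists by hypothesis. The first projection $\spec T'\to \spec T$ is $p$-completely faithfully flat by assumption, while the second projection $\spec T'\to X$ supplies the desired lift of $\eta|_{T'}$ via the universal property of the pullback (the two composites $\spec T'\to X\to Y$ and $\spec T'\to \spec T\to Y$ are canonically homotopic, which is exactly the witness of the lift). The only remaining point is to verify that $\spec T'\to \spec T$ is genuinely an fpqc cover in the site of $p$-nilpotent animated rings, rather than merely a $p$-completely faithfully flat cover. Since $\pi_0(T')$ is an algebra over $\pi_0(T)$ and $p$ is nilpotent in the latter, $T'$ is itself $p$-nilpotent; Lemma~\ref{lem: equiv of fpqc and p-fpqc} then upgrades $p$-completely faithful flatness of $T\to T'$ to honest faithful flatness, completing the argument.

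I anticipate no substantive obstacle: the proof is essentially a bookkeeping exercise, where the only care needed is the $\infty$-categorical interpretation of effective epimorphisms via surjectivity on $\pi_0$-sheaves (which is standard and unaffected by passing to hypersheaves, cf. the conventions in \S~\ref{sec: conventions}) and the clean separation between the flatness check (handled by Lemma~\ref{lem: equiv of fpqc and p-fpqc}) and the lifting of the point (handled by the representability hypothesis).
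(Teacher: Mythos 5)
Your proposal is correct and follows the same route as the paper: unwind the effective epimorphism condition as the existence of a lift after an fpqc cover, take the cover to be the representable fiber product $\spec T\times_Y X\to \spec T$, and upgrade $p$-complete faithful flatness to faithful flatness via Lemma~\ref{lem: equiv of fpqc and p-fpqc}. Your extra remark that $T'$ is automatically $p$-nilpotent (so the cover lives in the right site) is a point the paper leaves implicit.
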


\begin{proof}
    Note that $f$ is an effective epimorphism for the fpqc topology if for any point $\eta\colon \spec (T)\to Y$ from a $p$-nilpotent animated ring $T$, there is, after an fpqc base change $\spec (T')\to \spec (R)$ a lift of $\eta$ to a point $\eta'\colon \spec (T')\to X.$ Assuming the hypothesis of the lemma, the morphism $\spec (T)\times_Y X\to X$ provides the requisite lift of $\eta$, noting that $\spec (T)\times_Y X\to X\to \spec (T)$ is faithfully flat since it is $p$-completely faithfully flat by Lemma \ref{lem: equiv of fpqc and p-fpqc}.
\end{proof}

\begin{theorem}\label{thm: main thm}
 The map $\eta_R$ of Construction \ref{cons: nyg can map} is an isomorphism.   
\end{theorem}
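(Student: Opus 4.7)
The plan is to apply Lemma~\ref{lem: checking on a single cover} with $A = k$ and cover $k[x] \to k[x^{1/p^\infty}]$. First I would verify that both the source and target functors preserve colimits in $\aring_k$: for $R \mapsto R^\nyg$ this is formal from the transmutation description $R^\nyg = \maps_k(R, \GG_a^\nyg(-))$, and for $R \mapsto \cR_\nyg(F^*\Prism_{R/k})$ this follows from the colimit preservation of $\fil_\nyg F^*\Prism_{-/k}$ established in Appendix~\ref{Appendix: colim derived nyg} combined with the behavior of the Rees construction and $\uspf_{k^\nyg}$. With this in hand, Lemma~\ref{lem: checking on a single cover} reduces the assertion that $\eta_R$ is an isomorphism for every $R \in \aring_k$ to two checks: (a) $\eta_{k[x^{1/p^\infty}]}$ is an equivalence, and (b) both $(-)^\nyg$ and $\cR_\nyg(F^*\Prism_{-/k})$ convert the map $k[x] \to k[x^{1/p^\infty}]$ into an fpqc epimorphism in $\Shv(\daff_{k^\nyg})$.

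Since $k[x^{1/p^\infty}]$ is quasi-regular semiperfect, (a) is immediate from Theorem~\ref{thm: eta is isomorphism when qrsp}. For (b), by Lemma~\ref{lem: detecting fpqc covers} it suffices to show on each side that the base change to any $p$-nilpotent affine point of the target is representable by an affine scheme and $p$-completely faithfully flat. On the $(-)^\nyg$ side this can be obtained by invoking Theorem~\ref{thm: eta is isomorphism when qrsp} again to identify the base change with the Rees stack of $k[x^{1/p^\infty}]$, reducing to the classical faithful flatness of $k[x] \to k[x^{1/p^\infty}]$ on the relevant graded pieces.

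The main obstacle is verifying (b) for the Rees stack side, which amounts to the claim that the induced morphism of graded $p$-complete derived $W(k)\langle u,t\rangle/(ut-p)$-algebras
$$\rees(\fil_\nyg F^*\Prism_{k[x]/k}) \to \rees(\fil_\nyg F^*\Prism_{k[x^{1/p^\infty}]/k})$$
is representable and faithfully flat after any connective base change. As foreshadowed in Remark~\ref{remark: overview proof}, my strategy is a constructible stratification argument: cover $k^\nyg$ by the loci of \S~\ref{sec: cons strat of knyg} obtained by inverting or vanishing the parameters $u,t$ (namely $k^\crys$, $k^\Prism$, $k^{\dR,+}$, $k^{\HT,\conj}$, $k^\HT$, and $k^\hodge$), and apply the criterion of Appendix~\ref{appendix on testing flatness}, which reduces global faithful flatness on $\spec(W(k)\langle u,t\rangle/(ut-p))$ to stratumwise checks. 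On each stratum, Lemma~\ref{lem: functorialities for rees cons} identifies the Rees algebra with a classical invariant (derived de Rham, conjugate Hodge-Tate, Hodge cohomology, and so on), where faithful flatness of the base change from $k[x]$ to $k[x^{1/p^\infty}]$ is a direct computation using standard properties of divided power envelopes and the relative Frobenius. Gluing these stratumwise flatness statements delivers the global flatness, and hence the fpqc epimorphism demanded by Lemma~\ref{lem: checking on a single cover}, completing the proof.
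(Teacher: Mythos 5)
Your overall strategy is the same as the paper's: reduce via Lemma~\ref{lem: checking on a single cover} to the cover $k[x]\to k[x^{1/p^\infty}]$, use Theorem~\ref{thm: eta is isomorphism when qrsp} on the quasiregular semiperfect side, and establish the fpqc-epimorphism condition for the Rees-stack side by a constructible stratification over $k^\nyg$. However, there is a genuine gap: you never address \emph{connectivity} of the base change $B'=B\widehat{\underline{\otimes}}_{\rees(\fil_\nyg F^*\Prism_{k[x]/k})}\rees(\fil_\nyg F^*\Prism_{k[x^{1/p^\infty}]/k})$. Connectivity is needed twice and cannot be skipped: Lemma~\ref{lem: detecting fpqc covers} requires the fiber product over a $p$-nilpotent point to be representable by an animated (hence connective) ring, and the stratification criterion of Appendix~\ref{appendix on testing flatness} (Corollary~\ref{faithful on constructible}) is only valid for \emph{connective} modules --- Remark~\ref{connective on constructible} gives an explicit counterexample ($\QQ_p/\ZZ_p[-1]$ over $\ZZ_p$) showing connectivity itself cannot be tested on a constructible stratification, so you cannot hope to get it for free from the stratumwise checks. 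In the paper this is the most delicate step (Lemma~\ref{lem: the criterion for fpqc}): one resolves $Q=\rees(\fil_\nyg F^*\Prism_{S/k})$ by a bar construction over $T=\rees(\fil_\nyg F^*\Prism_{A/k})$ in graded $p$-complete modules, extracts an exhaustive $\N$-indexed filtration with graded pieces $T^{\widehat{\otimes} n+1}\widehat{\otimes}Q[n]$, and then uses completeness of the Nygaard filtration together with Lemma~\ref{connective mod f} to check the requisite connectivity mod $t$, where it reduces to the $(-1)$-connectivity of $\rees(\fil_\conj\HTp_{A/k})$.

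Two smaller issues. First, your justification that $(-)^\nyg$ sends $k[x]\to k[x^{1/p^\infty}]$ to an fpqc epimorphism is circular as written: identifying the base change ``with the Rees stack of $k[x^{1/p^\infty}]$'' presupposes an understanding of $k[x]^\nyg$, which is exactly what the theorem is establishing. The paper instead quotes the external fact that the Nygaard filtered prismatization carries quasisyntomic covers to fpqc epimorphisms (\cite[Proposition 6.12.1]{GM24}). Second, the stratumwise flatness checks are not quite a ``direct computation'': the content is that $\HTp_{S/k}$ admits an exhaustive filtration with free graded pieces over $\HTp_{A/k}$, obtained from the splitting of the Hodge--Tate gerbe by the Frobenius lift $W(k)[x]^\wedge_p$ and a semi-free resolution of $A$ over the de Rham complex in the style of \cite[Lemma 8.6]{BS19} (Lemmas~\ref{lem: pcomp algebra over prism is pcomp} and~\ref{lem: filtrations on de rham complex}); moreover, one only needs the strata $k^\crys$, $k^\HT$, $k^\hodge$ (applied after reducing mod $p$), not all six loci.
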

\begin{proof}
    We will use Lemma \ref{lem: checking on a single cover} by the colimit preserving properties of both the source and target functors in question. For the source this follows from the explicit formula given by transmutation and for the target this follows from the main result of Appendix \ref{Appendix: colim derived nyg}.
    Thus we may just check for $A:=k[x]$ and consider the quasisyntomic cover $S=k[x^{1/p^\infty}].$ The target is quasiregular semiperfect (in fact perfectoid). Let $S^*$ be the Cech conerve of the canonical map $A\to S$. Note that each term of $S^*$ is quasiregular semiperfect by \cite[Lemma 4.30]{BhattMorrowScholze2019THHandPadicHodge}. Now since $(-)^\nyg$ preserves all colimits of $k$-algebras, for each $n\geq 1$, the tensor product $S^{\otimes_A^n}$ gets sent to the $n$ times fiber product of$(S^{\nyg})$ over $A^\nyg$. In particular $(S^*)^\nyg$ is the Cech nerve of the maps $S^\nyg\to A^\nyg$ and we denote this by $(S^\nyg)^*.$ 
    
    By the colimit preserving properties of $\rnyg{-}$, a similar remark  holds for $\rnyg{S^*}$.
    
    Thus by Theorem \ref{thm: eta is isomorphism when qrsp}, we may conclude that the map of simplicial objects $$\eta_{S^*}\colon (S^*)^\nyg=(S^\nyg)^*\to \rnyg{S^*}=(\rnyg{S})^*$$ is an isomorphism.

    Now note that the functors $(-)^\nyg$ sends the quasisyntomic cover $A\to S$ to an fpqc epimorphism by \cite[Proposition 6.12.1.]{GM24}.

    Therefore to conclude we just need to show that the canonical map $\cR_\nyg(F^*\Prism_{S/k})\to \cR_\nyg(F^*\Prism_{A/k})$ is an fpqc epimorphism. We use the criteria of Lemma \ref{lem: detecting fpqc covers} to do so. First note that if $\spec(T)\to k^\nyg$ is a $p$-nilpotent ring mapping to $k^\nyg$, then by  the affine diagonal of $k^\nyg$ there is an algebra $\cA\in \dalg^\cn(k^\nyg)$ whose relative spectrum is $\spec(T).$ Thus we have some graded $p$-complete animated algebra $A$ over the graded $p$-complete algebra $\wkut$ which has the additional property that it admits a morphism from $\rees(\fil_\nyg(F^*\Prism_{A/k}))$, the latter considered as a graded $p$-complete derived algebra.
    Therefore to check the condition in Lemma \ref{lem: detecting fpqc covers} it will suffice to verify the stronger condition in the forthcoming Lemma \ref{lem: the criterion for fpqc}.
\end{proof}

We first need a general construction of Mathew-Mondal \cite{MM24}.

\begin{construction}\cite[Definition 2.1]{MM24}\label{cons: coconnective t-structure}
  Let $R$ be any (possibly non-connective) $E_1$-ring and let $\Mod_R$ be the category of $A$-modules. Then $\Mod_R$ admits a natural $t$-structure where the connective part is the smallest full subcategory generated under colimits and extensions by $A$ itself. The coconnective part is precisely those spectra which under the forgetful functor $\Mod_R\to \mathrm{Sp}$ lie in $\mathrm{Sp}_{\leq 0}$ i.e. the underlying spectrum is coconnective. 
  If $R\to S$ is a morphism of $E_1$-rings then the functor $-\otimes_A B$ is right $t$-exact i.e. preserves connective objects as it preserves colimits. Moreover this $t$-structure is compatible with filtered colimits in the sense of \cite[Definition 1.3.5.20]{HA}.
  When $R$ is connective then this is the usual $t$-structure on $\Mod_R$ studied in \cite[Proposition 7.1.1.13.]{HA}.
\end{construction}

\begin{lemma}\label{lem: connectivity by filtrations}
    Let $T$ be any (possibly non-connective) $E_\infty$-ring and $M$ a $T$-module. Suppose there is an increasing exhaustive $\mathbf{N}$-indexed filtration $\fil^{\geq 0}M$ on $M$ with each graded piece connective over $T$, then for any connective $E_\infty$-ring $B$, the tensor product $M\otimes_T B$ has underlying spectrum connective.
\end{lemma}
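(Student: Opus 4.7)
The plan is to reduce the statement about $M$ to the analogous statement about each $\fil^n M$ by exhaustion, then to reduce each $\fil^n M$ to its graded pieces by an inductive argument along the filtration exact triangles, and finally to verify the claim for a generic connective $T$-module in the sense of Construction \ref{cons: coconnective t-structure} by a standard generation argument.

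More precisely, since $\fil^{\geq 0} M$ is exhaustive and indexed by $\mathbf{N}$, we have $M = \colim_n \fil^n M$ in $\Mod_T$. Because $-\otimes_T B$ preserves colimits and the forgetful functor $\Mod_B \to \Sp$ preserves colimits, and because connective spectra are closed under colimits, it suffices to prove that each $\fil^n M \otimes_T B$ has connective underlying spectrum. Next, the cofiber sequence $\fil^{n-1} M \to \fil^n M \to \gr^n M$ of $T$-modules is preserved by $-\otimes_T B$, so by induction on $n$ (with $\fil^{-1} M = 0$) together with the closure of connective spectra under extensions, the claim reduces to showing that $\gr^n M \otimes_T B$ has connective underlying spectrum for each $n$.

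For the remaining step, let $\mathcal{C} \subset \Mod_T$ denote the full subcategory of those $N$ for which $N \otimes_T B$ has connective underlying spectrum. We have $T \in \mathcal{C}$ because $T \otimes_T B \simeq B$, whose underlying spectrum is connective by hypothesis. Since $-\otimes_T B$ is colimit-preserving and exact, and the forgetful functor $\Mod_B \to \Sp$ preserves colimits and fiber sequences, $\mathcal{C}$ is closed under colimits and extensions in $\Mod_T$. By Construction \ref{cons: coconnective t-structure}, the connective part $(\Mod_T)_{\geq 0}$ is the smallest such subcategory containing $T$, so $(\Mod_T)_{\geq 0} \subset \mathcal{C}$. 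Since each $\gr^n M$ is connective as a $T$-module by hypothesis, this concludes the argument.

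The proof is essentially formal and has no real obstacle; the only subtle point is to keep track of the distinction between connectivity in the $t$-structure of Construction \ref{cons: coconnective t-structure} on $\Mod_T$ (which depends on $T$) and connectivity of the underlying spectrum (which does not), and to exploit that the forgetful functor $\Mod_T \to \Sp$ is right $t$-exact in the former sense even when $T$ is non-connective.
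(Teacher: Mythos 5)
Your proof is correct and follows essentially the same route as the paper's: induction along the filtration cofiber sequences using closure of connective objects under extensions and colimits, combined with the right $t$-exactness of $-\otimes_T B$ for the $t$-structure of Construction \ref{cons: coconnective t-structure}. The only difference is organizational — the paper first establishes connectivity of each $\fil^n(M)$ in $\Mod_T$ and then tensors, while you tensor first and re-derive the needed right $t$-exactness via the generation argument that the paper simply cites from that construction.
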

\begin{proof}
    We have cofiber sequences of $T$-modules

    $$\fil^0(M)=\gr_0(M)\to \fil^1(M)\to \gr_1(M)$$ whence by the long exact sequence for the homotopy groups of the $t$-structure in Construction \ref{cons: coconnective t-structure}, we observe that $\fil^1(M)$ is connective in $\Mod_T$. Inducting on $n$ along $$\fil^n(M)\to \fil^{n+1}(M)\to \gr_{n+1}(M)$$ we see that all the $\fil^n(M)$ are connective for $n\geq 0$. 

    Now we observe that by the last two sentences of Construction \ref{cons: coconnective t-structure} we have $\fil^n(M)\otimes_T B$ are connective as $B$-modules for the usual $t$-structure on $\Mod_B$.
    Now the tensor product preserves colimits. As
    $M\otimes_T B=\colim_n \fil^n(M)\otimes_T B$, we learn that the left hand side is connective as connective modules are closed under colimits.
\end{proof}

\begin{remark}\label{rem: exhaustive filtration by flat implies flat}
    In the setting of Lemma \ref{lem: connectivity by filtrations}, assume that each $\fil^i(M)$ is a coconnectively flat $T$-module i.e. is right $t$-exact. Then $M$ is also coconnectively flat since the $t$-structure of Construction \ref{cons: coconnective t-structure} is compatible with filtered colimits.
\end{remark}

\begin{lemma}\label{lem: the criterion for fpqc}
Keep the notation of the proof of Theorem \ref{thm: main thm} i.e. $A=k[x]$ and $S=k[x^{1/p^\infty}]$. Let $B$ be any $p$-complete connective graded $\wkut$-algebra admitting a morphism $\rees (\fil_\nyg F_*\Prism_{R/k})\to B$. Then consider the $p$-complete graded $B$-algebra $$B'=B\widehat{\underline{\otimes}}_{\rees (\fil_\nyg F_*\Prism_{A/k})} \rees (\fil_\nyg F_*\Prism_{S/k})$$  where the $\widehat{\underline{\otimes}}$ is the graded $p$-completed tensor product of $p$-complete graded derived $\wkut$-algebras. 

After passing to underlying algebras, the $B$-algebra $B'$ is connective and $p$-completely faithfully flat.
\end{lemma}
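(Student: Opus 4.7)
The plan is to deduce the lemma from the stronger statement that the map of graded $p$-complete $\wkut$-algebras $P := \rees(\fil_\nyg F^*\Prism_{A/k}) \to Q := \rees(\fil_\nyg F^*\Prism_{S/k})$ is itself $p$-completely faithfully flat on underlying modules. Given this, $B' = B \widehat{\underline{\otimes}}_P Q$ is the $p$-completed graded base change of $Q$ along the given morphism $P \to B$, and hence inherits both connectivity and $p$-complete faithful flatness over $B$ by base change.

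For the connectivity of $B'$, I would invoke Lemma \ref{lem: connectivity by filtrations}: the Rees grading on $Q$ gives an exhaustive $\mathbf{N}$-indexed increasing filtration on $Q$ viewed as a $P$-module, whose associated graded pieces are (up to shift) the Nygaard graded pieces $\gr^n_\nyg F^*\Prism_{S/k} \simeq \fil^n_\conj \HTp_{S/k}$ by \cite[Remark 5.1.2]{APC}. Because $S = k[x^{1/p^\infty}]$ is the perfection of the smooth algebra $A$, hence perfect over $\FF_p$, the cotangent complex $L_{S/k}$ vanishes; the Hodge-Tate comparison then identifies each $\fil^n_\conj \HTp_{S/k}$ with the discrete ring $S$. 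Each graded piece is therefore connective over $P$, and Lemma \ref{lem: connectivity by filtrations} yields the connectivity of $B'$.

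For the $p$-complete faithful flatness of $P \to Q$, I plan to use the constructible stratification criterion of Appendix \ref{appendix on testing flatness}. Take the stratification of $\spf \wkut$ into the three locally closed loci $\{t \neq 0\} \simeq \spf W(k)$, $\{t = 0, u \neq 0\} \simeq \spec k[u^{\pm 1}]$, and $\{t = u = 0\} \simeq \spec k$. By Lemma \ref{lem: functorialities for rees cons}, the pullbacks of $P \to Q$ along these strata are identified respectively with (i) $F^*\Prism_{A/k} \to F^*\Prism_{S/k}$, (ii) $\HTp_{A/k}[u^{\pm 1}] \to \HTp_{S/k}[u^{\pm 1}]$, and (iii) $\bigoplus_n \wedge^n L_{A/k}[-n] \to \bigoplus_n \wedge^n L_{S/k}[-n]$. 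For (i), $p$-complete faithful flatness is a standard consequence of quasisyntomic descent for relative prismatic cohomology applied to the cover $A \to S$ (see \cite{BS19, BhattMorrowScholze2019THHandPadicHodge}), noting that Frobenius twist is base change along an isomorphism of $W(k)$. For (ii), the same quasisyntomic descent yields the analogous statement for Hodge-Tate cohomology. For (iii), the vanishing $L_{S/k} = 0$ collapses the target to $S$ concentrated in graded degree $0$, and the claim reduces to the faithful flatness of $A \to S$ together with a small argument in the graded Mathew-Mondal coconnective $t$-structure \cite{MM24} to handle the contribution of the higher graded pieces of the source.

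The main technical obstacle I anticipate is item (iii), where one must carefully navigate the interplay between the Rees grading and the cohomological shifts coming from the wedge powers of the cotangent complex of $A$; the fact that the source has non-trivial pieces in positive graded degree mapping to zero requires a careful analysis of flatness in the non-connective graded setting. Once the three strata are handled, Appendix \ref{appendix on testing flatness} assembles the individual flatness verifications into a global one for $P \to Q$, which base changes to the desired conclusion for $B \to B'$.
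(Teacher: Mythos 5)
Your flatness half is broadly the paper's strategy: reduce via the constructible stratification of Appendix \ref{appendix on testing flatness} to the three strata $\{t\neq 0\}$, $\{t=0,u\neq 0\}$, $\{t=u=0\}$ and identify the restrictions via Lemma \ref{lem: functorialities for rees cons}. But you underestimate what is needed on each stratum. Because $\Prism_{A/k}$, $\HTp_{A/k}$ and $\bigoplus_i\Omega^i_{A/k}[-i]$ are \emph{non-connective}, ``faithfully flat'' must be taken in the Mathew--Mondal coconnective sense, and to get flatness of $B'$ over an \emph{arbitrary} connective $B$ after base change one cannot just quote an abstract flatness of $P\to Q$ and ``inherit by base change''; one needs to exhibit the target as admitting an exhaustive increasing $\N$-indexed filtration with graded pieces \emph{free} over the source (Remark \ref{rem: exhaustive filtration by flat implies flat}). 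The paper does this by splitting the Hodge--Tate gerbe via the Frobenius lifts of $A$ and $S$ and then building a semi-free resolution of $A$ over $\bigoplus_i\Omega^i_{A/k}[-i]$ (Lemma \ref{lem: filtrations on de rham complex}, following \cite[Lemma 8.6]{BS19}). Your item (iii) gestures at this but does not supply the construction, which is the actual content.

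The genuine gap is in the connectivity argument. Lemma \ref{lem: connectivity by filtrations} requires the graded pieces of the filtration to be \emph{connective over $T=P$} in the $t$-structure of Construction \ref{cons: coconnective t-structure}, i.e.\ to lie in the subcategory generated by $P$ under colimits and extensions --- not merely to have connective underlying spectrum. The graded components of the Rees construction (the discrete modules $\fil^i_\nyg F^*\Prism_{S/k}$, or the conjugate graded pieces $S$ you describe) are connective spectra, but there is no reason they are connective \emph{over} the large non-connective graded ring $P$; if they are not, $\gr\otimes_P B$ need not be connective and the lemma does not apply. This is precisely why the paper instead filters $Q$ by the bar resolution $\Barc_P(P,Q)$ over $C=\wkut$: its graded pieces $P^{\widehat{\otimes}n+1}\widehat{\otimes}_C Q[n]$ are \emph{induced} (free) $P$-modules on connective $C$-modules, hence connective over $P$ for the right reason, and the remaining check reduces, via $t$-completeness, to the $(-1)$-connectivity of $P\qq t=\rees(\fil_\conj\HTp_{A/k})$. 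Relatedly, your opening claim that connectivity of $B'$ ``follows by base change'' from flatness of $P\to Q$ is false as stated when $P$ is non-connective; connectivity and flatness require separate arguments here, and the bar-resolution idea is the missing ingredient for the former.
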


\begin{proof}
Assume that we can verify the algebra is connective. Then we need to prove $p$-complete flatness of $B'$ over $B.$ This amounts to showing that $B'\qq p$ is a faithfully flat module over $B\qq p$. Via Corollary \ref{faithful on constructible}, it will suffice to check that the pullback of the $B\qq p$ algebra $B'\qq p$ is  faithfully flat when restricted to $k^\crys, k^\HT$ and $k^\hodge.$ This is in the forthcoming Corollaries \ref{cor: pcomp flat on prism}. Note here that $k^\crys$ is the Frobenius pullback of $k^\Prism$ as follows from the explicit constructions of the corresponding ring stacks, recalled in Constructions \ref{const prism} and \ref{const: cryst}. Thus it remains to check that $B'$ is connective as a $B$-algebra. 

For this set $T:=\rees(\fil_\nyg F^*\Prism_{R/k})$ and $Q:=\rees(\fil_\nyg F^*\Prism_{S/k})$ and finally $C=\wkut.$ Following a suggestion of Bhatt, we show that the $T$-module $Q$ has an increasing exhaustive $\N$-indexed filtration via the Bar resolution of $Q$ taken as a $T$-module in the $\infty$-category of graded $p$-complete $C$-modules . We have $$\Barc_{T}(T,Q):=\bigg(\simp { T\widehat{\underline{\otimes}}_C Q}{ T\widehat{\underline{\otimes}}_CT\widehat{\underline{\otimes}}_C Q}{T\widehat{\underline{\otimes}}_C T\widehat{\underline{\otimes}}_C T\widehat{\underline{\otimes}}_C Q }\bigg)
$$ which is a split simplicial object by \cite[Example 4.7.2.7]{HA} augmenting  on $Q$ via the action map.
In other words $$|\Barc_{T}(T,Q)|=Q.$$ 
Since the forgetful functor $\Gr\Mod_C^{\pcomp}\to \Mod_C^\pcomp$ is colimit preserving and symmetric monoidal (here the first $C$ is taken with its grading), we see that, even in $\Mod_C^\pcomp$, the bar complex $\Barc_{T}(T,Q)$ is a resolution of $Q.$ 
Via the $\infty$-categorical Dold-Kan correspondence in \cite[\S~1.2.4]{HA}, we get an increasing exhaustive $\N$-indexed filtration $\fil^\bullet Q$ on $Q$ with graded given by $$\gr^nQ=T^{\widehat{\otimes}n+1}\widehat{\otimes}_C Q[n].$$
By Lemma \ref{lem: connectivity by filtrations}, it will suffice to check that $T^{\widehat{\otimes}n+1}\widehat{\otimes}_C Q[n]$ is connective over $T$ where the $T$-module structure is determined by the left most factor. 
By the penultimate sentence of Construction \ref{cons: coconnective t-structure} it suffices to prove that $T^{\widehat{\otimes}n}\otimes_C Q[n]$ has underlying spectrum connective. Since $Q$ is connective, it suffices to check that $T^{\widehat{\otimes}n}[n]$ is connective. The Nygaard filtration on a smooth algebra is complete (eg. by the proof of \cite[Theorem 3.3.5]{Bhatt22} or by \cite[Corollary 5.2.11]{APC}) whence $T$ is $t$-complete as a $C$-module. Thus by Lemma \ref{connective mod f}, it suffices to check $T\qq t$ is $-1$-connective. But $T\qq t=\rees(\fil_{\mathrm{conj}}\HTp_{A/k})$ and the latter is clearly $-1$-connective eg. by \cite[Remark 4.1.12]{APC}.

\end{proof}

We first use a consequence of \cite[Theorem 7.17]{APCII}.

\begin{lemma}\label{lem: pcomp algebra over prism is pcomp}
   Keep the notation of the proof of Theorem \ref{thm: main thm} i.e. $A=k[x]$ and $S=k[x^{1/p^\infty}]$. Let $B$ be a $p$-complete animated $\Prism_{A/k}$-algebra. Then the $p$-complete $B$ algebra $B':= B\widehat{\otimes}_{\Prism_{A/k}}\Prism_{S/k}$ is connective and $p$-completely flat over $\Prism_{A/k}.$
\end{lemma}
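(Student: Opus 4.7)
The plan is to use the Bhatt--Lurie de Rham affineness of relative prismatization (Theorem \ref{eq: rel prism is de rham affine}) to translate the quasisyntomic cover $A=k[x]\to k[x^{1/p^\infty}]=S$ into $p$-complete faithful flatness of the induced map of prismatic cohomologies $\Prism_{A/k}\to \Prism_{S/k}$, after which the lemma follows formally by base change. First, via the Bhatt--Lurie absolute comparison \cite[Theorem 5.6.2]{APC}, I may replace $\Prism_{A/k}$ and $\Prism_{S/k}$ by $\Prism_{A/W(k)}$ and $\Prism_{S/W(k)}$ respectively, placing us in the framework of \cite[\S 7]{APCII}.

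The central step is showing that $\Prism_{A/k}\to \Prism_{S/k}$ is $p$-completely faithfully flat. Since $A\to S$ is a $p$-completely faithfully flat quasisyntomic cover, the induced morphism $(S/W(k))^\Prism \to (A/W(k))^\Prism$ of $p$-adic formal stacks over $\spf W(k)$ is an fpqc epimorphism, by a quasisyntomic descent argument for relative prismatization analogous to that used for $(-)^\nyg$ via \cite[Proposition 6.12.1]{GM24}. Applying Theorem \ref{eq: rel prism is de rham affine} to both sides identifies these stacks with $\uspf(\Prism_{A/W(k)})$ and $\uspf(\Prism_{S/W(k)})$, and testing the resulting fpqc cover on connective $p$-nilpotent $W(k)$-algebras (as in Lemma \ref{lem: detecting fpqc covers}, combined with Lemma \ref{lem: equiv of fpqc and p-fpqc}) then translates into $p$-completely faithful flatness of the corresponding map of $p$-complete derived $W(k)$-algebras.

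Granted this flatness, the $p$-completed base change $B'=B\widehat{\otimes}_{\Prism_{A/k}}\Prism_{S/k}$ is $p$-completely flat over $B$ by standard properties of base change. For connectivity, I reduce mod $p$ using Appendix \ref{appendix: connectivity mod J}: the reduction $B'\qq p$ is the derived base change of the connective $(B\qq p)$-algebra along the flat map $\Prism_{A/k}\qq p \to \Prism_{S/k}\qq p$, and such flat base change preserves connectivity.

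The main obstacle I expect is the inference from an fpqc epimorphism of corepresented functors to $p$-complete faithful flatness of the corepresenting derived algebras; in general $\uspf$ need not reflect faithful flatness, so one must exploit the specific structure of Theorem \ref{eq: rel prism is de rham affine}. An alternative route that sidesteps any such reflection principle is to exploit the perfectness of $S$ directly: one identifies $\Prism_{S/W(k)}\simeq W(S)$ and $\Prism_{A/W(k)}$ with the $p$-completed free $\delta$-$W(k)$-algebra on $x$, and then verifies $p$-complete faithful flatness of this explicit morphism by reducing mod $p$ to the manifestly faithfully flat map $k[x]\to k[x^{1/p^\infty}]$.
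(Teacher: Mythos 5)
There is a genuine gap, and in fact both routes you sketch miss the actual content of the lemma. The paper's proof (following that of \cite[Theorem 7.17]{APCII}) identifies $\Prism_{A/k}\qq p\simeq \HTp_{A/k}\simeq \bigoplus_{i\ge 0}\Omega^i_{A/k}[-i]$ and $\Prism_{S/k}\qq p\simeq S$ via the evident Frobenius lifts, and then exhibits $S$ (equivalently $A$, since $A\to S$ is faithfully flat) with an exhaustive increasing $\N$-indexed filtration whose graded pieces are \emph{free} over the de Rham complex, via the semi-free resolution $(\cdots\to\Omega[-2]\to\Omega[-1]\to\Omega)\simeq A$ of Lemma \ref{lem: filtrations on de rham complex} (a shift of \cite[Lemma 8.6]{BS19}). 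This filtration is what makes the lemma work: since $\Prism_{A/k}$ is \emph{non-connective}, "flat base change preserves connectivity" is not a standard fact but precisely the delicate point, and it is handled by the coconnective $t$-structure of Construction \ref{cons: coconnective t-structure} together with Lemma \ref{lem: connectivity by filtrations} and Remark \ref{rem: exhaustive filtration by flat implies flat}. Your proposal never produces such a filtration.

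Concretely: your main route attempts to deduce $p$-complete faithful flatness of $\Prism_{A/k}\to\Prism_{S/k}$ from the fpqc-epimorphism property of the induced map of stacks. That inference runs in the wrong direction (Lemma \ref{lem: detecting fpqc covers} goes from pointwise flatness \emph{to} the epimorphism property, not back), and you acknowledge this yourself; moreover, the flatness statement you are trying to prove is itself an ingredient in the proof of \cite[Theorem 7.17]{APCII}, so invoking that theorem is circular. Your alternative route rests on a false identification: $\Prism_{k[x]/W(k)}$ is \emph{not} the $p$-completed free $\delta$-$W(k)$-algebra on $x$ (the latter is a huge discrete ring, whereas $\Prism_{k[x]/k}\qq p\simeq \HTp_{k[x]/k}$ has cohomology in degrees $0$ and $1$). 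Consequently the mod-$p$ reduction of $\Prism_{A/k}\to\Prism_{S/k}$ is not the "manifestly faithfully flat" map $k[x]\to k[x^{1/p^\infty}]$ but the composite $\bigoplus_{i\ge 0}\Omega^i_{A/k}[-i]\to A\to S$, and the flatness of $S$ over the non-discrete source is exactly the nontrivial semi-free resolution argument that your write-up omits.
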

\begin{proof}
    This is in the proof of \cite[Theorem 7.17]{APCII}. We sketch their proof for the convenience of the reader. By Lemma \ref{lem: connectivity by filtrations} and Remark \ref{rem: exhaustive filtration by flat implies flat} it will suffice to show that $\Prism_{S/k}\qq p\simeq \HTp_{S/k}$ admits an exhaustive increasing $\N$-indexed filtration with graded free over $\Prism_{A/k}\qq p\simeq \HTp_{A/k}.$ 
    Both $S$ and $A$ admit $p$-completely flat Frobenius lifts (or $\delta$-lifts) $\widetilde{S}:=W(k)[x^{1/p^\infty}]^\wedge_p$ and $\widetilde{A}:=W(k)[x]^\wedge_p$ to $W(k)$, where the Frobenius is just $x\mapsto x^p$. The $\widetilde{A}$ canonically splits the Hodge-Tate gerbe $A^\HT\to \spec(A)$ recalled in Construction \ref{const HT}, and gives an isomorphism of derived $k$-algebras $$\HTp_{A/k}\simeq \bigoplus_{i\geq 0}\Omega^i_{A/k}[-i],$$ where the Breuil-Kisin twists have been canonically trivialised in characteristic $p.$ The lift $\widetilde{S}$ induces an isomorphism $S^\HT\simeq S$ whence the map $\HTp_{A/k}\to \HTp_{S/k}$ is carried over to the map $$\bigoplus_{i\geq 0}\Omega^i_{A/k}[-i]\to A\to S$$ where the left map is the augmentation from the de Rham complex. Since $A\to S$ is clearly faithfully flat, it suffices to show that the map $A$ admits an $\N$-indexed exhaustive increasing filtration as a $\bigoplus_{i\geq 0}\Omega^i_{A/k}[-i]$-module so that the graded are all free over $\bigoplus_{i\geq 0}\Omega^i_{A/k}[-i]$. This is a shift of the calculation done in \cite[Lemma 8.6]{BS19}, which we recall in our forthcoming Lemma \ref{lem: filtrations on de rham complex} (since the case of the Hodge complex is useful in the sequel).
\end{proof}

\begin{remark}\label{rem: semi-free resolutions}
    The next lemma builds semi-free resolutions over the de Rham complex considered as a DG-algebra. We refer the reader to \cite[\S~13]{Drin08} for more on this.
\end{remark}

\begin{lemma}\label{lem: filtrations on de rham complex}
Keep the notation of the proof of Theorem \ref{thm: main thm} i.e. $A=k[x]$. Then the $\Omega:=\bigoplus_{i\geq 0}\Omega^i_{A/k}[-i]$-modules $A$ admits an increasing exhaustive $\N$-indexed filtration with the graded are all free over $\bigoplus_{i\geq 0}\Omega^i_{A/k}[-i]$. The same remains true when we consider $A$ as a module over $\Omega^{\hodge}:=\bigoplus_{i\geq 0}\Omega^i_{A/k}[-i]$ as a complex with zero differentials . 
\end{lemma}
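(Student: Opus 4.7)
The plan is to build an explicit semi-free resolution of $A$ as a DG-module over $\Omega$ (resp.\ $\Omega^{\hodge}$) via a divided-power Koszul construction, and then transport the resolution's natural filtration through the resulting quasi-isomorphism. This is the essential content of \cite[Lemma 8.6]{BS19}, and we follow it closely.

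Concretely, let $\Gamma(y) = \bigoplus_{n \geq 0} k \cdot y^{[n]}$ be the divided-power algebra on a variable $y$ placed in cohomological degree $0$. Form the free graded $\Omega$-module
\[
P \;:=\; \Omega \otimes_k \Gamma(y)
\]
with basis $\{y^{[n]}\}_{n\geq 0}$. Equip $P$ with the unique differential that extends the internal differential on $\Omega$ (either the de Rham differential, or zero for $\Omega^{\hodge}$), satisfies $d(y^{[n]}) = dx \cdot y^{[n-1]}$ with convention $y^{[-1]}=0$, and obeys the Leibniz rule. The identity $d^2 = 0$ follows from $dx^2 = 0$ in $\Omega^2_{A/k}=0$. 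Define $\fil^n P := \Omega \otimes_k \bigoplus_{m \leq n} k \cdot y^{[m]}$. Since $d(y^{[m]}) \in \fil^{m-1}$ whenever $m \leq n$, each $\fil^n P$ is a sub-DG-$\Omega$-module, and the filtration is increasing and exhaustive. The graded piece $\gr^n P \cong \Omega\cdot y^{[n]}$ is a free $\Omega$-module of rank one, because the Koszul part of the differential lowers the $y^{[-]}$-index and therefore vanishes modulo $\fil^{n-1}$, leaving only the internal differential on $\Omega$.

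The remaining step is to show that the augmentation $P \to A$, defined by the projection $\Omega \to A$ on the $y^{[0]}$-component and by $y^{[n]} \mapsto 0$ for $n \geq 1$, is a quasi-isomorphism. Passing to the filtered/graded quasi-isomorphism $P \simeq A$ then transports the filtration above to $A$ in the derived $\infty$-category of $\Omega$-modules. In the Hodge case this verification is immediate: in degree zero $\ker(d|_{P^0}) = A\cdot y^{[0]}$ and in degree one every element $\sum b_m \, dx \cdot y^{[m]}$ is trivially in $\im(d|_{P^0})$ by preimage $\sum b_m\, y^{[m+1]}$. In the de Rham case one checks by direct computation that $\ker(d|_{P^0})$ consists of elements of the form $\sum_n (-1)^n f^{(n)}(x)\, y^{[n]}$ with $f \in k[x]$ (a finite sum since $f$ is polynomial), and that the recursion $a_{m+1} = b_m - a_m'$ starting from $a_0 = 0$ produces a finite sequence solving $d\bigl(\sum a_m y^{[m]}\bigr) = \sum b_m\, dx \cdot y^{[m]}$ for any prescribed $v = \sum b_m\, dx \cdot y^{[m]} \in P^1$. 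Both $\ker$ and cokernel computations thus identify $H^\ast(P)$ with $A$ concentrated in degree zero.

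The main technical point is the use of divided powers $\Gamma(y)$ rather than the ordinary polynomial ring $k[y]$: in characteristic $p$ the polynomial Koszul complex fails because $d(y^p) = p y^{p-1}\,dx = 0$, producing spurious $H^0$-classes. Divided powers eliminate this obstruction and make the construction work uniformly in all characteristics, which is exactly what the argument of \cite[Lemma 8.6]{BS19} exploits.
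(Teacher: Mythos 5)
Your proposal is correct and is essentially the paper's argument: both construct the same semi-free resolution of $A$ over $\Omega$ with an exhaustive increasing filtration whose graded pieces are free rank-one copies of $\Omega$, following \cite[Lemma 8.6]{BS19}, and both observe that the construction ignores the internal differential and so applies verbatim to $\Omega^{\hodge}$. The only difference is presentational — you write the resolution as an explicit divided-power Koszul complex $\Omega\otimes_k\Gamma(y)$ and verify the quasi-isomorphism by hand, while the paper assembles it by iterated fiber sequences $\Omega\to A$, $\Omega[-1]\to\Omega^1_{A/k}[-1]$, etc., and takes the stupid filtration on the resulting total complex.
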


\begin{proof}
    We have $\Omega=A\bigoplus \Omega^1_{A/k}[-1]$ with differential given by the de Rham differential. We have the augmentation $\Omega\to A$ and the fiber is given by $\Omega^1_{A/k}[-1]\simeq Adx[-1]$. Thus there is again a canonical morphism $\Omega[-1]\to \Omega^1_{A/k}[-1]$ with fiber $\Omega_{A/k}[-2]$ and so on. Thus we have built a semi-free resolution of the DG $\Omega$-module $A$ as $$(\ldots \Omega[-3]\to \Omega[-2]\to \Omega[-1]\to \Omega)\simeq A.$$
    We may endow the chain complex above by the stupid increasing filtration (which is always exhaustive) which then endows $A$ with a filtration $\fil^\bullet_{\mathrm{naive}}A$ which has the property that $\gr^i_{\mathrm{naive}}A\simeq \Omega$. So we conclude. 

    For the case of $\Omega^\hodge$ we just observe that the above resolution did not use that the differential on $\Omega$ except for building the chain maps. Thus $A$ admits an increasing exhaustive $\N$-indexed filtration as an $\Omega^\hodge$ module so that the graded are again given by $\Omega^\hodge.$
 
\end{proof}

\begin{corollary}\label{cor: pcomp flat on prism} We have the following flatness results:
\begin{enumerate}
    \item The restriction of $B'$ as a $B$ algebra to the locus $k^\Prism$ of Construction \ref{const prism} is $p$-completely faithfully flat.

    \item  The restriction of $B'$ as a $B$ algebra to the locus $k^\HT$ of Construction \ref{const HT} is  faithfully flat.

    \item  The restriction of $B'$ as a $B$ algebra to the locus $k^\hodge$ of Construction \ref{const: hodge} is  faithfully flat.
\end{enumerate}
    
\end{corollary}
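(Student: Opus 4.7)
The plan is to handle each of the three parts by restricting the derived algebras $T := \rees(\fil_\nyg F^*\Prism_{A/k})$ and $Q := \rees(\fil_\nyg F^*\Prism_{S/k})$ to the relevant locus of $k^\nyg$ using the base changes recorded in Lemma \ref{lem: functorialities for rees cons}, establishing the desired flatness statement for the specialized transition map, and then propagating it to the base change $B \to B'$ by the formation of $p$-completed graded tensor products. The key fact invoked throughout is that $A \to S$ is faithfully flat on the nose, so that all flatness statements about de Rham-type invariants relative to $A \to S$ can be upgraded to faithful flatness after tracking the relevant Hodge or Hodge--Tate splittings over the polynomial algebra $A = k[x]$.

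For part (1), the locus $k^\Prism = k^\nyg|_{u\neq 0}$ corresponds to graded base change along $\wkut \to \wkut[1/u]$. By Lemma \ref{lem: functorialities for rees cons}(2) this specializes $T$ to $\Prism_{A/k}$ and $Q$ to $\Prism_{S/k}$. Hence $B|_{k^\Prism}$ is a $p$-complete $\Prism_{A/k}$-algebra and
$$B'|_{k^\Prism} \simeq B|_{k^\Prism}\,\widehat{\otimes}_{\Prism_{A/k}}\,\Prism_{S/k}.$$
Lemma \ref{lem: pcomp algebra over prism is pcomp} then directly gives $p$-complete flatness. For the \emph{faithful} part, I would reduce modulo $p$: after Hodge--Tate splitting via the $\delta$-lifts $\widetilde A$ and $\widetilde S$, the induced map on $\pi_0$ of the mod-$p$ reduction is the faithfully flat map $A \to S$; combined with the already established flatness of $B'/p$ over $B/p$, faithful flatness on $\pi_0$ propagates by nilpotence of $p$.

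For parts (2) and (3), one specializes further along the natural base changes out of $\wkut$. In part (2), $k^\HT \simeq \spec k$ is obtained by the composite base change of Lemma \ref{lem: functorialities for rees cons}(5), which identifies the specialized transition map with $\HTp_{A/k} \to \HTp_{S/k}$; faithful flatness of the latter is exactly what is extracted in the proof of Lemma \ref{lem: pcomp algebra over prism is pcomp} by combining the semi-free filtration of $A$ over $\bigoplus_{i\geq 0}\Omega^i_{A/k}[-i]$ built in Lemma \ref{lem: filtrations on de rham complex} with faithful flatness of $A \to S$. Base changing along $B|_{k^\HT}$ yields the desired conclusion. Part (3) is entirely parallel: $k^\hodge \simeq \bgm$ is cut out by $u = t = 0$, and Lemma \ref{lem: functorialities for rees cons}(6) identifies the specialization with $\bigwedge^* L_{A/k}[-*] \to \bigwedge^* L_{S/k}[-*]$ as graded modules with zero differential; the second assertion of Lemma \ref{lem: filtrations on de rham complex} (the $\Omega^\hodge$ case) supplies the analogous increasing exhaustive filtration with free graded pieces, and faithful flatness of $A \to S$ then yields faithful flatness after base change.

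The principal bookkeeping issue, which I would regard as the main obstacle, is part (1): extracting \emph{faithful} flatness (rather than mere flatness) from Lemma \ref{lem: pcomp algebra over prism is pcomp} requires checking that the filtration produced there has its degree-zero graded piece surjecting via the augmentation $\Omega \to A$ compatibly with $A \to S$, so that faithful flatness of $A \to S$ detects faithful flatness of the resulting base change on $\pi_0$. Given this, parts (2) and (3) are essentially immediate consequences of tools already assembled in Lemmas \ref{lem: pcomp algebra over prism is pcomp} and \ref{lem: filtrations on de rham complex}.
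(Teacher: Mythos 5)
Your proposal is correct and follows essentially the same route as the paper: restrict to each locus via Lemma \ref{lem: functorialities for rees cons}, identify the specialized transition map ($\Prism_{A/k}\to\Prism_{S/k}$, $\HTp_{A/k}\to\HTp_{S/k}$, and the Hodge-graded version respectively), and then invoke Lemma \ref{lem: pcomp algebra over prism is pcomp} together with the filtration arguments of Lemma \ref{lem: filtrations on de rham complex}. Your explicit handling of the \emph{faithfulness} (reducing to $\pi_0$ mod $p$ and using faithful flatness of $A\to S$ through the Hodge--Tate splitting) is in fact more careful than the paper's proof, which cites only the flatness lemmas (Lemma \ref{lem: connectivity by filtrations} and Remark \ref{rem: exhaustive filtration by flat implies flat}) and leaves that point implicit.
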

\begin{proof}
\begin{enumerate}
    \item After Lemma \ref{lem: pcomp algebra over prism is pcomp}, this follows from Lemma \ref{lem: connectivity by filtrations} and Remark \ref{rem: exhaustive filtration by flat implies flat}.
    \item  This is a consequence of the proof of Lemma \ref{lem: pcomp algebra over prism is pcomp}.
    \item   Using the reasoning of Lemma \ref{lem: pcomp algebra over prism is pcomp}, this follows from the second assertion of Lemma \ref{lem: filtrations on de rham complex}.
\end{enumerate}
   
\end{proof}

We will now record some corollaries of Theorem \ref{thm: main thm}.

\begin{corollary}\label{cor: substacks iso}[De Rham affineness of substacks of $R^\nyg.$]
    For any animated $k$-algebra $R$ there is a natural in $R$-isomorphism
    \begin{enumerate}
        \item $ (\eta_{R})_{t\neq 0}\colon R^\crys \simeq \uspec(F^*\Prism_{R/k})$ over $\spf(W(k))=k^\crys.$
        \item $ (\eta_{R})_{u\neq 0}\colon R^\Prism\simeq  \uspec(\Prism_{R/k})$ over $\spf(W(k))=k^\Prism.$

        \item $(\eta_{R})_{u=0}\colon (R/k)^{\dR,+}\simeq \rhge{R}$ over $\agm=k^{\dR,+}.$

        \item $(\eta_{R})_{t=0}\colon R^{\HT,\conj}\simeq \rconj{R}$ over $\agm=k^{\HT,\conj}.$
        \item $(\eta_{R})_{t=0, u\neq 0}\colon R^{\HT}\simeq \uspec(\HTp_{R/k})$ as stacks over $\spec(k)=k^{\HT}$

        \item $(\eta_{R})_{t=0, u=0} \colon (R/k)^{\hodge}\simeq \uspec(\bigwedge^* L_{R/k}[-*])$ as stacks over $\bgm\simeq k^{\hodge}.$
    \end{enumerate}
\end{corollary}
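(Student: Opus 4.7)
The proof is an immediate consequence of Theorem \ref{thm: main thm} by base change. Concretely, the plan is: for each of the six loci $Z \hookrightarrow k^\nyg$ from Section \ref{sec: cons strat of knyg}, pull back the $k^\nyg$-equivalence $\eta_R \colon R^\nyg \simeq \cR_\nyg(F^*\Prism_{R/k})$ along the morphism $Z \to k^\nyg$ and identify each side via the preceding structural results. Since both $\eta_R$ and pullback are natural in $R$, this will automatically produce isomorphisms natural in $R$ over the designated base stack.

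The identification of the left-hand side on each locus is supplied by the corresponding construction in Section \ref{sec: cons strat of knyg}. Namely, Constructions \ref{const: cryst}, \ref{const prism}, \ref{cons: khodge}, \ref{cons: kconj}, \ref{const HT}, and \ref{const: hodge} identify the pullbacks of $R^\nyg$ along the inclusions $k^\crys, k^\Prism, k^{\dR,+}, k^{\HT,\conj}, k^\HT, k^\hodge \hookrightarrow k^\nyg$ with $R^\crys$, $R^\Prism$, $(R/k)^{\dR,+}$, $R^{\HT,\conj}$, $R^\HT$, and $(R/k)^\hodge$ respectively. Each identification arises, via transmutation, from the corresponding specialization of the ring stack $\GG_a^\nyg$.

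The identification of the right-hand side on each locus is exactly the content of the unnamed lemma following Construction \ref{cons: specialisation of rees stacks}: the pullback of $\cR_\nyg(F^*\Prism_{R/k})$ to each of the six substacks is, respectively, $\uspf(F^*\Prism_{R/k})$, $\uspf(\Prism_{R/k})$, $\rhge{R}$, $\rconj{R}$, $\uspec(\HTp_{R/k})$, and $\uspec_{\bgm}(\bigwedge^\ast L_{R/k}[-\ast])$. These identifications follow from Lemma \ref{lem: functorialities for rees cons}, together with the fact that the relevant specializations are implemented by graded (or, in the $u \neq 0$ case, localization) base changes of $\rees(\fil_\nyg F^*\Prism_{R/k})$, each of which preserves derived algebra structures by Remark \ref{remark: specialization preserve derived algebras}.

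There is essentially no obstacle here: the hard work is done by Theorem \ref{thm: main thm}, which is where actual flatness and fpqc-descent arguments occur. One should just verify that the pullback of $\eta_R$ to each locus really matches the naturally-constructed maps between the two identifications; but this is automatic, as Construction \ref{cons: nyg can map} built $\eta_R$ by adjunction from the Kan-extended comparison of the Nygaard-filtered derived algebra $\fil_\nyg F^*\Prism_{R/k}$ with $f_*^{R^\nyg}\cO_{R^\nyg}$, and this comparison commutes with the base changes that implement all six specializations.
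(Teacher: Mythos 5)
Your proposal is correct and is exactly the paper's argument: the paper's proof is the one-line observation that all six isomorphisms are obtained by restricting $\eta_R$ to the corresponding loci of $k^\nyg$, with the two sides identified by the constructions of \S~\ref{sec: cons strat of knyg} and the lemma following Construction \ref{cons: specialisation of rees stacks}, just as you describe. Your version simply spells out the details the paper leaves implicit.
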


\begin{proof}
    These are all obtained by the restriction of $\eta_R$ to the corresponding loci of $k^\nyg.$
\end{proof}

\appendix

\section{Testing faithful flatness on a constructible stratification}\label{appendix on testing flatness}
\renewcommand{\thetheorem}{\Alph{section}.\arabic{theorem}}
In this appendix we prove some of the faithful flatness lemmas needed in the main body of the note. Fix an animated ring $A$ and $f\in \pi_0(A).$  We are interested in testing $t$-exactness for modules $M\in \mathrm{Mod}(A)$ on the `constructible' stratification of the derived  scheme $\spec A$ given by $\spec A[1/f]$ and the underlying topological space of $\spec A/f$.  This will be obtained by an analysis of the category of complexes on both these loci. The main results are recorded Corollaries \ref{flat on constructible} and \ref{faithful on constructible}.

We begin with some general remarks.

\begin{remark}(Recollections on flatness over animated rings)\label{flatness and t-exactness} Let $A$ be an animated ring and $M\in \Mod_A$. Then recall from \cite[Definition 7.2.2.10]{HA} that  is called flat if $\pi_0M$ is a flat $\pi_0(A)$ module and the natural action map $\pi_*(A)\otimes_{\pi_0(A)}\pi_0(M)\to \pi_*(M)$ is an isomorphism of $\ZZ$-graded abelian groups (in particular, $M$ is connective). By the derived variant of Lazard's theorem \cite[Theorem 7.2.2.15]{HA}, when $M$ is already known to be connective, this condition is equivalent to the following two (easier to work with) and equivalent conditions
    \begin{enumerate}
        \item The functor $M\otimes_A-$ is left $t$-exact on $\Mod_A$ i.e. sends coconnective modules to coconnective modules,
        \item The functor $M\otimes_A -$ preserves discrete $A$-modules. 
    \end{enumerate}

    In the sequel we will use flat and left $t$-exact interchangeably for connective modules.
\end{remark}

\begin{remark}(Recollection on \emph{faithful flatness} over an animated ring.)\label{faithful on pi0} Following \cite[Definition 5.2]{Lur11b} we say that a module $M$ over an animated ring $A$ is faithfully flat if it is flat in the sense of Remark \ref{flatness and t-exactness} and $\pi_0M$ is faithfully flat over $\pi_0A$.
    
\end{remark}

\begin{remark} \label{loc coh}(Recollections on local cohomology.)
    Let $A$ be an animated ring and $I\subset \pi_0(A)$ a finitely generated ideal. Then recall from \cite[\S~4.1]{Lur11} that the category $\Mod_A^{I\text{-}\mathrm{tors}}$ of $I$-torsion modules is a colocalizing subcategory of $\Mod_A$ consisting of those modules whose homotopy groups are endowed with a locally nilpotent action of $I$. The right adjoint $\Mod_A^{I\text{-}\mathrm{tors}}\hookrightarrow \Mod_A$ is denoted by $\Gamma_I(-)\colon \Mod_A \to \Mod_A^{I\text{-}\mathrm{tors}}$  and agrees with Grothendieck's theory of local cohomology in the Noetherian case. We are here interested in the following properties of local cohomology when $I=(f) $ is a principal ideal. Fix a module $M\in \Mod_A.$

    \begin{enumerate}
        \item There is a cofiber sequence  
    $$ \Gamma_{(f)}(M)\to M\to M[1/f]$$
  which is a sequence of coconnective modules whenever $M$ is coconnective. Indeed, this follows from \cite[Example 4.1.14.]{Lur11} and \cite[Proposition 4.1.18.]{Lur11}) respectively.

 \item The canonical map $\Gamma_{(f)}(M/\!\!/f)\to M/\!\!/f $ is an isomorphism. This can be seen as the cofiber is $(M/\!\!/f)[1/f]$ by $(1)$ and is clearly $0$ since $f$ acts both null-homotopically and invertibly on $M.$ More conceptually, $M/\!\!/f$ has $f$-torsion homotopy groups, and therefore is in $ \Mod_A^{I\text{-}\mathrm{tors}}$ and so $\Gamma_{(f)}(-)$ is the identity functor. 

 \item There is an equivalence $\Gamma_{(f)}M=\Gamma_{(f)}(A)\tensor_A M$. In particular for any $N\in \Mod_A$ we have $\Gamma_{(f)}(M)\tensor_A N=\Gamma_{(f)}(M\otimes_A N)=M\otimes_A \Gamma_{(f)}(N)$ by \cite[Proposition 4.1.12]{Lur11}. In particular $\Gamma_{(f)}(M)\tensor_A N\in \Mod^{I\text{-}\mathrm{tors}}_A$ always (see also \cite[Corollary 4.1.7]{Lur11} for a more general statement).

    \end{enumerate}
\end{remark}

We will now use the above observations to prove a constructible criterion for faithful flatness. This statement should certainly be well known but we don't know a reference.

\begin{lemma}\label{left t-exact constructible}
    Let $M\in \mathrm{Mod}_A$. Then the following conditions are equivalent
    \begin{enumerate}
        \item $M\otimes_A-$ is left $t$-exact,
        \item $M[1/f]\otimes_{A[1/f]}-$ and $M/\!\!/f\otimes_{A/\!\!/f}-$ are left $t$-exact on $\mathrm{Mod}_{A[1/f]}$ and $\Mod_{A/\!\!/f}$ respectively
    \end{enumerate}
    
\end{lemma}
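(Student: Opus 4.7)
The plan is to verify both implications separately, with the bulk of the work lying in (2) $\Rightarrow$ (1).

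The direction (1) $\Rightarrow$ (2) is straightforward. Given $N \in \Mod_{A[1/f]}$ coconnective, its underlying $A$-module is coconnective since the $t$-structure on each module category is inherited from $\Sp$. Hypothesis (1) then gives that $M \otimes_A N$ is coconnective in $\Mod_A$, and one identifies $M \otimes_A N \simeq M[1/f] \otimes_{A[1/f]} N$ via the $A[1/f]$-module structure on $N$. The argument for coconnective $N \in \Mod_{A/\!\!/f}$ is identical, using the identification $M \otimes_A N \simeq M/\!\!/f \otimes_{A/\!\!/f} N$.

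For (2) $\Rightarrow$ (1), fix a coconnective $N \in \Mod_A$. The approach will be to leverage the local cohomology triangle
$$\Gamma_{(f)}(N) \to N \to N[1/f]$$
of Remark \ref{loc coh}(1), which is a triangle of coconnective modules. Tensoring with $M$ and using that $\Mod_A^{\leq 0}$ is closed under taking cofibers of coconnective objects, it suffices to verify coconnectivity of $M \otimes_A N[1/f]$ and $M \otimes_A \Gamma_{(f)}(N)$ separately. The former identifies with $M[1/f] \otimes_{A[1/f]} N[1/f]$, and this is coconnective by (2) since $N[1/f]$ is coconnective.

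The main obstacle will be handling $M \otimes_A K$ for $K := \Gamma_{(f)}(N)$, since $K$ itself need not be an $A/\!\!/f$-module and so hypothesis (2) does not apply to it directly. The plan is to set $X := M \otimes_A K$ and observe that $X$ is $f$-torsion by Remark \ref{loc coh}(3), so each $\pi_i(X)$ carries a locally $f$-nilpotent action. Applying $M \otimes_A -$ to the cofiber sequence $K \xrightarrow{f} K \to K/\!\!/f$ yields
$$X \xrightarrow{f} X \to M \otimes_A K/\!\!/f \simeq M/\!\!/f \otimes_{A/\!\!/f} K/\!\!/f,$$
where the last identification uses the $A/\!\!/f$-module structure on $K/\!\!/f$, and where $K/\!\!/f$ is coconnective as the cofiber of a map of coconnective modules. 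Hypothesis (2) then renders the right-hand side coconnective, and the associated long exact sequence on homotopy forces $f$ to act as an isomorphism on $\pi_i(X)$ for $i \geq 1$. An action which is simultaneously an isomorphism and locally nilpotent must vanish, so $\pi_i(X) = 0$ for $i \geq 1$, giving the desired coconnectivity of $M \otimes_A K$ and completing the argument.
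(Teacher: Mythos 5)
Your strategy is the same as the paper's: split a coconnective $N$ via the local cohomology triangle $\Gamma_{(f)}(N)\to N\to N[1/f]$, handle the localized piece by base change to $A[1/f]$, and handle the torsion piece $K=\Gamma_{(f)}(N)$ by tensoring the multiplication-by-$f$ cofiber sequence and exploiting local $f$-nilpotence on the homotopy of $X=M\otimes_A K$. However, one step as written is false: $K/\!\!/f$ is \emph{not} coconnective. The cofiber of a map of coconnective modules is only $1$-coconnective; from $K\xrightarrow{f}K\to K/\!\!/f$ one gets $\pi_1(K/\!\!/f)=\ker(f\colon\pi_0K\to\pi_0K)$, which is typically nonzero precisely \emph{because} $\pi_0K$ is locally $f$-nilpotent (e.g.\ $A=\ZZ$, $f=p$, $N=\ZZ/p$ gives $K/\!\!/p\simeq \ZZ/p\oplus\ZZ/p[1]$). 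The same misstatement ("$\Mod_A^{\leq 0}$ is closed under taking cofibers of coconnective objects") appears earlier in your reduction; there it is harmless because what you actually use is closure of the coconnective part under \emph{extensions} (coconnective fiber and cofiber force a coconnective total object), which is true. But in the torsion step the false claim propagates: you cannot conclude that $M/\!\!/f\otimes_{A/\!\!/f}K/\!\!/f$ is coconnective, nor that $f$ acts as an isomorphism on $\pi_i(X)$ for $i\geq 1$.

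The repair is exactly the degree bookkeeping the paper does. One has $K/\!\!/f\in(\Mod_{A/\!\!/f})_{\leq 1}$, and left $t$-exactness of $M/\!\!/f\otimes_{A/\!\!/f}-$ commutes with the shift, so $X/\!\!/f\simeq M/\!\!/f\otimes_{A/\!\!/f}K/\!\!/f$ lies in $(\Mod_A)_{\leq 1}$. In the long exact sequence $\pi_{i+1}(X/\!\!/f)\to\pi_i(X)\xrightarrow{f}\pi_i(X)$ the left term vanishes for $i\geq 1$, so $f$ is merely \emph{injective} on $\pi_i(X)$ for $i\geq 1$; injectivity together with local $f$-nilpotence still forces $\pi_i(X)=0$, and the lemma follows. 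So replace "coconnective" by "$1$-coconnective" and "isomorphism" by "injective" in your last step. (A cosmetic difference: you tensor $K\xrightarrow{f}K\to K/\!\!/f$ with $M$, while the paper tensors $M\xrightarrow{f}M\to M/\!\!/f$ with $\Gamma_{(f)}(N)$; these yield the same triangle $X\xrightarrow{f}X\to X/\!\!/f$.)
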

\begin{proof}
The forward direction is immediate since $t$-exactness for connective $E_\infty$-rings is a property of the underlying spectra. 

So let us prove the other direction. 
 We will show that if $N\in (\mathrm{Mod}_A)_{\leq 0}$ then $M\otimes_A N\in (\Mod_A)_{\leq 0}$. 
 
Using point $(1)$ of Remark \ref{loc coh}, there is a cofiber sequence of coconnective $A$-modules
 
 $$
     \Gamma_{(f)}(N)\to N\to N[1/f]
 $$
 
 Tensoring with $-\otimes_A M$ we get 
 
 \begin{equation}\label{orthogonal cofiber tensor}
     \Gamma_{(f)}(N)\tensor_A M\to N\tensor_A M\to N[1/f]\tensor_A M.
 \end{equation}
 
Observe that if the left and right terms of \ref{orthogonal cofiber tensor} are coconnective then so is the middle term and we would be done.

 First note that because $N[1/f]\otimes_A M=N[1/f]\otimes_{A[1/f]} M[1/f]$, we learn that the rightmost term of \ref{orthogonal cofiber tensor} is in fact coconnective by the hypothesis on $M.$

 We are left to understand $\Gamma_{(f)}(N)\tensor_A M$. 
 For this, first consider the canonical cofiber sequence $$
     M\xrightarrow{f}M\to M/\!\!/f
$$ which we tensor with $\Gamma_{(f)}(N)$ to get 

 \begin{equation}\label{local cofiber}
     \Gamma_{(f)}(N)\tensor_A M\xrightarrow{f}  \Gamma_{(f)}(N)\tensor_A M \to  \Gamma_{(f)}(N)\tensor_A M/\!\!/f 
 .\end{equation}

Note that the right most term is the cofiber of multiplication by $f$ on $\Gamma_{(f)}(N)\tensor_A M.$

To control the terms in \ref{local cofiber}, we use the colocalizing properties of local cohomology as explained in Remark \ref{loc coh}. Specifically by point $(3)$ of Remark \ref{loc coh} we have $\Gamma_{(f)}(N)\tensor_A M/f=N\tensor_A \Gamma_{(f)}(M/\!\!/f)$ and by point $(2)$ of Remark \ref{loc coh} we have $N\tensor_A \Gamma_{(f)}(M/\!\!/f)=N\tensor_A M/\!\!/f.$

Now $N\tensor_A M/\!\!/f=N/\!\!/f\tensor_{A/\!\!/f}M/\!\!/f$ and since $N/\!\!/f\in (\Mod_{A/\!\!/f})_{\leq 1}$ we learn that $N/\!\!/f\tensor_{A/\!\!/f}M/\!\!/f\in (\Mod_{A/\!\!/f})_{\leq 1}$ by the $t$-exactness hypothesis on $M/\!\!/f$ as an $A/\!\!/f$-module. 

All in all \begin{equation}\label{1-coconnective}
     \Gamma_{(f)}(N)\tensor_A M/\!\!/f \in (\Mod_A)_{\leq 1}.
\end{equation}

Thus it remains to control the first two terms of \ref{local cofiber}. Note that by point $(2)$ of Remark \ref{loc coh} the homotopy groups of $\Gamma_{(f)}(N)\tensor_A M$ are $(f)$-torsion. Then taking the long exact sequence associated to \ref{local cofiber} the relevant terms for $i\geq 1$ are $$\ldots\to 0=\pi_{i+1}(\Gamma_{(f)}(N)\tensor_A M/\!\!/f)\to \pi_i\Gamma_{(f)}(N)\tensor_A M \xrightarrow{f} \pi_i\Gamma_{(f)}(N)\tensor_A M\to \ldots.$$

Now since $\pi_i\Gamma_{(f)}(N)\tensor_A M$ is entirely $(f)$-torsion for all $i\in \ZZ$, we observe that if it is non-zero for $i\geq 1$ then $\pi_{i+1}(\Gamma_{(f)}(N)\tensor_A M/\!\!/f)\neq 0$ which contradicts \ref{1-coconnective}.

\end{proof}

\begin{corollary}\label{flat on constructible}
    Let $M\in (\Mod_A)_{\geq 0}$. Then $M$ is $A$-flat if and only if $M/\!\!/f$ is $A/\!\!/f$-flat and $M[1/f]$ is $A[1/f]$-flat.
\end{corollary}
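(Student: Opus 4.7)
The plan is to reduce the claim to the preceding Lemma \ref{left t-exact constructible} via the characterisation of flatness for connective modules recalled in Remark \ref{flatness and t-exactness}, which identifies flatness of a connective $A$-module with left $t$-exactness of the tensor product functor. The corollary then becomes essentially a restatement of Lemma \ref{left t-exact constructible} combined with the observation that connectivity is preserved under the two base changes in question.

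For the forward direction, if $M$ is flat over $A$, then $M[1/f] = M \otimes_A A[1/f]$ is flat over $A[1/f]$ and $M/\!\!/f = M \otimes_A A/\!\!/f$ is flat over $A/\!\!/f$; both statements follow formally because flatness is stable under base change (one checks on homotopy groups). For the converse, assume $M/\!\!/f$ is flat over $A/\!\!/f$ and $M[1/f]$ is flat over $A[1/f]$. Note first that both $M/\!\!/f$ and $M[1/f]$ are connective (the cofiber of $f\colon M \to M$ is connective, and filtered colimits preserve connectivity). By Remark \ref{flatness and t-exactness}, the hypothesis of flatness translates into saying that $M/\!\!/f \otimes_{A/\!\!/f} -$ and $M[1/f] \otimes_{A[1/f]} -$ are both left $t$-exact on their respective module categories. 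Lemma \ref{left t-exact constructible} then immediately implies that $M \otimes_A -$ is left $t$-exact on $\Mod_A$. Since $M$ is connective by hypothesis, applying Remark \ref{flatness and t-exactness} once more yields that $M$ is flat over $A$.

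There is essentially no obstacle; the substance of the argument lives entirely in Lemma \ref{left t-exact constructible}, and Corollary \ref{flat on constructible} is the connective repackaging. The only small check is that $M/\!\!/f$ is connective, which is handled by the cofiber-sequence description, and that $M[1/f]$ is connective, which holds because localization at $f$ can be computed as a sequential colimit along multiplication by $f$.
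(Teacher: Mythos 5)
Your proposal is correct and follows the paper's own route: the corollary is obtained exactly by combining Lemma \ref{left t-exact constructible} with the characterisation of flatness for connective modules in Remark \ref{flatness and t-exactness}. The extra checks you include (connectivity of $M/\!\!/f$ and $M[1/f]$, and stability of flatness under base change for the forward direction) are correct and harmless elaborations of what the paper leaves implicit.
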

\begin{proof}
    This follows by combining Lemma \ref{left t-exact constructible} and Remark \ref{flatness and t-exactness}.
\end{proof}

\begin{remark}\label{discrete when discrete}
When $A$ is discrete, then derived Lazard's theorem as recalled in Remark \ref{flatness and t-exactness} implies that an $M\in (\Mod_A)_{\geq 0}$ satisfying the conditions in Lemma \ref{left t-exact constructible} is  itself discrete.
\end{remark}

We will now end this section with the needed faithfulness statement. 

\begin{corollary}\label{faithful on constructible}
    Let $M\in (\Mod_A)_{\geq 0}$ be an $A$-module. Then $M$ is faithfully flat if and only if $M/\!\!/f$ is $A/\!\!/f$ faithfully flat and $M[1/f]$ is $A[1/f]$-faithfully flat.
    \end{corollary}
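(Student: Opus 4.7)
The plan is to reduce the corollary to its classical shadow on $\pi_0$. By Corollary \ref{flat on constructible}, the flatness parts match up, so only the \emph{faithfulness} part is in question. By Remark \ref{faithful on pi0}, faithful flatness of a flat connective $A$-module $M$ is equivalent to $\pi_0 M$ being faithfully flat over $\pi_0 A$. Thus it suffices to show: for the flat discrete $\pi_0 A$-module $\pi_0 M$, faithful flatness over $\pi_0 A$ is equivalent to faithful flatness of $\pi_0 M/f$ over $\pi_0 A/f$ together with faithful flatness of $(\pi_0 M)[1/f]$ over $(\pi_0 A)[1/f]$.

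First I would identify homotopy groups on both sides. Since $M$ is connective, the cofiber sequence $M \xrightarrow{f} M \to M\qq f$ yields a surjection $\pi_0 M \twoheadrightarrow \pi_0(M\qq f)$ with kernel $f\pi_0 M$, so $\pi_0(M\qq f) \simeq \pi_0 M / f\pi_0 M$, and analogously $\pi_0(A\qq f) \simeq \pi_0 A/f\pi_0 A$. Similarly $\pi_0(M[1/f]) = (\pi_0 M)[1/f]$ and $\pi_0(A[1/f]) = (\pi_0 A)[1/f]$. So the two hypotheses translate to classical faithful flatness statements for discrete modules.

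Next I would invoke the standard prime-ideal characterization of faithful flatness: a flat $\pi_0 A$-module $N$ is faithfully flat iff the induced map $\Spec(N \otimes_{\pi_0 A} \pi_0 A) \to \Spec(\pi_0 A)$ is surjective, equivalently iff $N \otimes_{\pi_0 A} \kappa(\mathfrak{p}) \neq 0$ for every prime $\mathfrak{p}$ of $\pi_0 A$. Every such prime either contains $f$, in which case it is pulled back from $\pi_0 A/f\pi_0 A$, or it does not, in which case it is pulled back from $(\pi_0 A)[1/f]$. Since tensoring with $\kappa(\mathfrak{p})$ factors through the appropriate localisation/quotient in each case, the surjectivity over $\Spec(\pi_0 A)$ is equivalent to surjectivity over each piece of the set-theoretic decomposition $\Spec(\pi_0 A) = \Spec(\pi_0 A/f) \sqcup \Spec((\pi_0 A)[1/f])$.

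There isn't really a hard obstacle: once flatness is in hand via Corollary \ref{flat on constructible}, the remaining content is a purely classical fact about discrete rings, applied to $\pi_0$ of our animated data. The only bookkeeping point to be careful about is that the formation of $\pi_0$ commutes both with the derived quotient $(-)\qq f$ (since everything is connective, so no new homotopy enters in negative degrees) and with the localisation $(-)[1/f]$ (which is $t$-exact). Together with Remark \ref{faithful on pi0}, this bookkeeping yields the equivalence.
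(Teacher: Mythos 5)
Your proposal is correct and follows essentially the same route as the paper: flatness via Corollary \ref{flat on constructible}, reduction of faithfulness to $\pi_0$ via Remark \ref{faithful on pi0}, and then the classical fibre-wise criterion for faithful flatness of a flat module combined with the set-theoretic decomposition of $\mathrm{Spec}(\pi_0 A)$ along the vanishing and non-vanishing loci of $f$ (the paper simply cites the Stacks Project for this last step rather than spelling out the prime decomposition).
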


\begin{proof}
    Flatness has been established in Corollary \ref{flat on constructible}. We just need to check faithfulness. By Remark \ref{faithful on pi0} we are reduced to checking the $\pi_0M$ is faithfully flat $\pi_0A$ module if and only if $\pi_0(M/\!\!/f)=\pi_0(M)/f$ and $\pi_0(M[1/f])=\pi_{0}(M)[1/f]$ are faithfully flat $\pi_0(A/\!\!/f)=\pi_0(A)/f$ and $\pi_0(A[1/f])=\pi_0(A)[1/f]$-modules respectively. 

    But thus we are in the discrete case. Note that $\pi_0(M),\pi_0(M)/f$ and $\pi_0(M)[1/f]$ are discrete flat modules over their respective algebras. It suffices to check that they have non-zero fibers. But then this follows from \cite[\href{https:/\!\!/stacks.math.columbia.edu/tag/00HP}{Tag 00HP}]{stacks-project}.  
\end{proof}

\begin{remark}\label{connective on constructible}
    One may wonder if connectivity can also be tested on constructible stratifications i.e. if $M\in \Mod(A)$ is an arbitrary module such that $M/\!\!/f\in \Mod(A/\!\!/f)$ is connective and $M[1/f]\in \Mod(A[1/f])$ is connective then is $M$ connective? The answer is no as the example $\QQ_p/\ZZ_p[-1]=\fib(\ZZ_p\to \QQ_p)$ as an object of $\Mod(\ZZ_p)$ shows. From the fiber sequence one sees that mod $p$ one has $\QQ_p/\ZZ_p[-1]/\!\!/p=\FF_p$ and after inverting $p$ one has $\QQ_p/\ZZ_p[-1](1/p)=0$. In fact this is the best that one can do as the next lemma shows.
\end{remark}

\begin{lemma}
    Let $M$ be an $A$ module which is connective mod $f$ and after inverting $f$ as in Remark \ref{connective on constructible}, then $M$ is $(-1)$-connective and $\pi_{-1}(M)$ is $f$-divisible and $f$-torsion. In particular if $\pi_{-1}M$ is (derived) $f$-complete, then $M$ is connective. \footnote{The author discovered this lemma with Ryo Ishizuka.}
\end{lemma}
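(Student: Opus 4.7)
The plan is to analyze the homotopy groups of $M$ using the cofiber sequence $M \xrightarrow{f} M \to M/\!\!/f$, whose associated long exact sequence in homotopy reads
\begin{equation*}
\cdots \to \pi_{j+1}(M/\!\!/f) \to \pi_j(M) \xrightarrow{f} \pi_j(M) \to \pi_j(M/\!\!/f) \to \pi_{j-1}(M) \xrightarrow{f} \pi_{j-1}(M) \to \cdots.
\end{equation*}
Since $M/\!\!/f$ is connective by hypothesis, $\pi_j(M/\!\!/f) = 0$ for every $j < 0$. Reading off the LES, I conclude that for all $j \leq -1$ multiplication by $f$ is surjective on $\pi_j(M)$, and for all $j \leq -2$ it is in fact bijective. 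In particular, $\pi_{-1}(M)$ is $f$-divisible.

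Next I would bring in the connectivity of $M[1/f]$. Since localization at $f$ is $t$-exact on $\Mod_A$, we have $\pi_j(M)[1/f] = \pi_j(M[1/f]) = 0$ for all $j < 0$. For $j \leq -2$ the previous paragraph identifies $\pi_j(M)$ with its own localization $\pi_j(M)[1/f]$, which therefore vanishes; this yields $(-1)$-connectivity of $M$. For $j = -1$, vanishing of $\pi_{-1}(M)[1/f]$ means that every element of $\pi_{-1}(M)$ is annihilated by some power of $f$, which is the $f$-(power) torsion claim.

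Finally, suppose $\pi_{-1}(M)$ is derived $f$-complete. I would invoke the standard characterization that an abelian group (or $\pi_0(A)$-module) $N$ is derived $f$-complete if and only if $\Hom(\pi_0(A)[1/f], N) = 0 = \Ext^1(\pi_0(A)[1/f], N)$. The group $\Hom(\pi_0(A)[1/f], N)$ identifies with $\lim(N \xleftarrow{f} N \xleftarrow{f} \cdots)$, i.e., the collection of compatible sequences $(x_i)_{i \geq 0}$ with $f x_{i+1} = x_i$; by the $f$-divisibility established above, any element $x_0 \in \pi_{-1}(M)$ extends to such a sequence, producing a nonzero homomorphism whenever $x_0 \neq 0$. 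Hence $\pi_{-1}(M) = 0$ and $M$ is connective. The whole argument is elementary bookkeeping with a long exact sequence; the only nontrivial ingredient is the Hom-characterization of derived $f$-completeness, which is standard.
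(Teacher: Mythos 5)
Your proof is correct and follows essentially the same route as the paper: the long exact sequence of $M\xrightarrow{f}M\to M/\!\!/f$ gives divisibility on $\pi_{-1}$ and bijectivity of $f$ below degree $-1$, localization kills $\pi_j(M)$ for $j\le -2$ and gives the ($f$-power) torsion claim, and derived $f$-completeness plus divisibility forces $\pi_{-1}(M)=0$. The only cosmetic differences are that the paper routes the torsion statement through the triangle $\Gamma_{(f)}(M)\to M\to M[1/f]$ rather than reading it off from $\pi_{-1}(M)[1/f]=0$, and finishes the completeness step by the derived Nakayama lemma for discrete modules rather than your (equivalent) $\Hom(\pi_0(A)[1/f],-)$-characterization.
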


\begin{proof}
    First note that $\pi_i(M/\!\!/f)$ vanishes for $i<0$ and so we see from the cofiber sequence $M\to M\to M/\!\!/f$ that $\pi_i(M)$ is $f$-divisible for all $i<0$ and multiplication by $f$ is invertible for $i<-1.$ On the other hand that $\pi_{i}(M[1/f])=\pi_{i}(M)[1/f]=0$ for all $i<0$ implies that $\pi_i(M)=0$ for $i<0.$ 

    Now consider the exact triangle $\Gamma_{(f)}(M)\to M\to M[1/f]$ and the connectivity of $M[1/f]$ implies that there is a surjection $\pi_{-1}(\Gamma_{(f)}(M))\to \pi_{-1}(M)$ which then proves the result as the homotopy groups of the source are entirely $f$-torsion.
\end{proof}

\section{Testing connectivity modulo $J$ for $J$-complete objects}\label{appendix: connectivity mod J}

Let $A$ be an animated ring and $J\subset \pi_0A$ a finitely generated ideal.  Suppose $M\in \Mod^{\jcomp}_A$ is such that after choosing generators $J=(f_1,\ldots, f_r)$ the quotient $M/\!\!/(f_1\ldots,f_r)$ is connective, then is $M$ connective? As we will see below the answer is affirmative and should be well known, but the author couldn't find a reference.\footnote{At the time of writing there was no reference. But since then the author has communicated the lemma to the Stacks Project, and a variant of the argument can be found in the relevant sections here \cite[\href{https://stacks.math.columbia.edu/tag/0H83}{Tag 0H83}]{stacks-project}. We record the argument here for animated rings. } Note that the result in Corollary \ref{connectivity modulo J} is stronger than derived Nakayama for discrete rings. 

\begin{lemma}\label{classical nakayam}
    Let $A$ be a discrete ring, $f\in A$ and $M$ a discrete (derived) $f$-complete $A$-module. If $M/fM=0$ then $M=0.$
\end{lemma}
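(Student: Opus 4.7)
The plan is to use the $R\Hom$ characterization of derived $f$-completeness: a complex $M$ is derived $f$-complete if and only if $R\Hom_A(A[1/f], M) = 0$. For a discrete module $M$, since the localization $A[1/f]$ has projective dimension at most $1$ over $A$, this vanishing reduces to the two conditions $\Hom_A(A[1/f], M) = 0$ and $\Ext^1_A(A[1/f], M) = 0$. For our purposes only the first is needed.

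Assuming the hypothesis $M/fM = 0$, that is $fM = M$, I would fix an arbitrary $m \in M$ and aim to exhibit it as the image of $1$ under an $A$-linear map $A[1/f] \to M$. Using that multiplication by $f$ is surjective on $M$, iteratively choose $m_i \in M$ with $fm_1 = m$ and $fm_{i+1} = m_i$ for all $i \geq 1$. Recalling the presentation of $A[1/f]$ as the $A$-module generated by symbols $1/f^i$ subject to the relations $f \cdot (1/f^{i+1}) = 1/f^i$, the compatible system $(m, m_1, m_2, \ldots)$ assembles into a well-defined $A$-linear map $\phi\colon A[1/f] \to M$ with $\phi(1/f^i) = m_i$, and in particular $\phi(1) = m$.

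Finally, invoking the derived $f$-completeness of $M$ in the form $\Hom_A(A[1/f], M) = 0$, we conclude $\phi = 0$, hence $m = \phi(1) = 0$. Since $m \in M$ was arbitrary, $M = 0$.

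The only mild subtlety is the invocation of the $R\Hom$ characterization and its reduction to the discrete $\Hom$-vanishing — both are standard facts about derived completions of modules over commutative rings (see e.g.\ \cite[\href{https://stacks.math.columbia.edu/tag/091N}{Tag 091N}]{stacks-project}), so no real obstacle arises. One could alternatively run the argument directly with the vanishing of $T(M) = R\lim(\cdots \xrightarrow{f} M \xrightarrow{f} M)$, since $fM = M$ makes the projection from $H^{-1}(T(M)) = \lim_n(M, f)$ onto $M$ surjective by the same choice of compatible preimages.
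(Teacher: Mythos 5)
Your proof is correct and is essentially the paper's argument in adjoint form: the paper phrases derived $f$-completeness as the vanishing of $T(M,f)=\lim(\cdots\xrightarrow{f}M\xrightarrow{f}M)$ and observes that $\pi_0T(M,f)$, the classical inverse limit, surjects onto $M$ because $fM=M$, while your compatible system $(m,m_1,m_2,\ldots)$ is exactly an element of that inverse limit, repackaged as a homomorphism $A[1/f]\to M$ whose vanishing kills $m$. (One cosmetic quibble in your closing remark: the classical inverse limit is $\pi_0$ of $T(M)$, i.e.\ $H^0$ of the $R\lim$, not $H^{-1}$.)
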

\begin{proof}
     Indeed, the completeness condition says that the derived limit $T(M,f)\colon=\lim(\ldots \xrightarrow{f}M\xrightarrow{f}M\xrightarrow{f} M)=0.$ In particular $\pi_0T(M,f)$ vanishes, but that is just the classical limit of multiplication by $f$ on $M$, which is zero if and only if $M=0$ as $M/fM=0$.
\end{proof}

\begin{lemma}\label{connective mod f}
    Let $A$ be an animated ring and $f\in \pi_0(A)$. Assume $M\in \Mod_A$ is $(f)$-complete. Suppose $M\qq f$ is connective. Then $M$ is connective.
\end{lemma}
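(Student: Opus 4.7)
The plan is to express $M\simeq \lim_n M\qq f^n$ using the derived $f$-completeness hypothesis and then control the homotopy groups of this limit via the Milnor $\lim^1$ exact sequence. The two inputs needed for this analysis will both come from a single auxiliary cofiber sequence.

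First I would prove by induction on $n \geq 1$ that each $M\qq f^n$ is connective. The base case $n=1$ is the hypothesis. For the inductive step, applying the octahedral axiom to the factorization $f^{n+1} = f \circ f^n$ of self-maps of $M$ produces a cofiber sequence
$$M\qq f \longrightarrow M\qq f^{n+1} \longrightarrow M\qq f^n,$$
and the long exact sequence in homotopy shows that when the outer terms are connective so is the middle.

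Next, the Milnor exact sequence for the tower $\{M\qq f^n\}$ reads
$$0 \longrightarrow {\lim}^1 \pi_{i+1}(M\qq f^n) \longrightarrow \pi_i(M) \longrightarrow \lim \pi_i(M\qq f^n) \longrightarrow 0.$$
For $i \leq -2$, both flanking terms vanish by connectivity of each $M\qq f^n$. For $i = -1$ the ordinary $\lim$ still vanishes, so it remains to check that ${\lim}^1 \pi_0(M\qq f^n) = 0$. Returning to the cofiber sequence displayed above, the long exact sequence in homotopy has next term $\pi_{-1}(M\qq f^n) = 0$, hence the transition maps $\pi_0(M\qq f^{n+1}) \twoheadrightarrow \pi_0(M\qq f^n)$ are all surjective; this is a particularly strong Mittag-Leffler condition and forces the $\lim^1$ to vanish. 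Combining, $\pi_i(M) = 0$ for every $i < 0$.

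The main obstacle is really just a bookkeeping one: recognizing that the single cofiber sequence produced by the octahedral axiom simultaneously supplies both the inductive connectivity of $M\qq f^n$ and the surjectivity of the transition maps needed to kill the $\lim^1$. Everything else is formal manipulation of the Milnor sequence.
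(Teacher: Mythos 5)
Your proof is correct, and its overall skeleton matches the paper's: both arguments first show by induction on $n$ that each $M\qq f^n$ is connective (the paper uses the cofiber sequence $M\qq f^{n-1}\to M\qq f^n\to M\qq f$, you use $M\qq f\to M\qq f^{n+1}\to M\qq f^n$; both come from the octahedral axiom and either works for the induction), and both then pass to $M\simeq \lim_n M\qq f^n$ to get $(-1)$-connectivity. Where you genuinely diverge is in killing $\pi_{-1}(M)$. The paper reads off from the cofiber sequence $M\xrightarrow{f}M\to M\qq f$ that $\pi_{-1}(M)$ is $f$-divisible, and then invokes two external inputs: that homotopy groups of a derived $f$-complete module are themselves derived $f$-complete (\cite[Theorem 4.2.13]{Lur11}), and derived Nakayama for discrete modules (Lemma \ref{classical nakayam}). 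You instead stay entirely inside the Milnor sequence and show $\lim^1\pi_0(M\qq f^n)=0$ via Mittag--Leffler, with surjectivity of the transition maps on $\pi_0$ extracted from $\pi_{-1}(M\qq f)=0$ in the same octahedral cofiber sequence. Your route is more self-contained — it uses nothing beyond the $\lim^1$ exact sequence for a tower of spectra — while the paper's route leans on a nontrivial completeness theorem for homotopy groups. The one point you should make explicit in a polished write-up is that the second arrow of your cofiber sequence $M\qq f\to M\qq f^{n+1}\to M\qq f^n$ is the transition map of the completion tower (it is: both are induced by the square with $f$ on the source copy of $M$ and the identity on the target copy), since it is precisely that map whose surjectivity on $\pi_0$ enters the $\lim^1$ computation.
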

\begin{proof}
    We have a cofiber sequence $$M\qq f^{n-1}\to M\qq f^n\to M\qq f$$ and by induction on $n$ we learn $M \qq f^n$ is connective for all $n>0.$  But note that $M\simeq \llim_n M\qq f^n\in (\Mod_A)_{\geq -1}.$ 
    The cofiber sequence $M\xrightarrow{f} M\to M\qq f$ shows that $\pi_{-1}(M)$ is $f$-divisible. But it is also derived $(f)$-complete as a $\pi_0(A)$-module by \cite[Theorem 4.2.13]{Lur11} and so $0$ by Lemma \ref{classical nakayam}.
\end{proof}

\begin{corollary}\label{connectivity modulo J}
    Let $M$ be a $J$-complete module such that after choosing generators $J=(f_1,\ldots, f_r)$ the quotient $M\qq (f_1,\ldots, f_r)$ is connective. Then $M$ is connective.
\end{corollary}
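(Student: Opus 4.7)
The plan is to reduce to the single-generator case (Lemma \ref{connective mod f}) by induction on $r$. The base case $r=1$ is exactly Lemma \ref{connective mod f} applied to $f_1$. For the inductive step, I would set
\[
M' := M \qq f_1 = \cofib(M \xrightarrow{f_1} M)
\]
and attempt to apply the corollary at level $r-1$ to $M'$ with respect to the smaller ideal $J' := (f_2, \ldots, f_r)$. The standard identification of iterated derived quotients via the Koszul tensor factorization gives $M' \qq (f_2,\ldots,f_r) \simeq M \qq (f_1,\ldots,f_r)$, which is connective by hypothesis. The inductive hypothesis would then yield connectivity of $M'$, and a final application of Lemma \ref{connective mod f} to $M$ with the single element $f_1$ (using that $M$ is $f_1$-complete, since $f_1 \in J$) would force $M$ itself to be connective.

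The substantive checkpoint is to verify that $M'$ is $J'$-complete. Here I would first invoke the fact that, for a finitely generated ideal, $J$-completeness of $M$ is equivalent to simultaneous $f_i$-completeness for each generator $f_i$. Then I would use that the full subcategory $\Mod_A^{(f_i)\text{-comp}} \subset \Mod_A$ is closed under cofibers: for any $f_i$-local test object $N$, the functor $\Maps(N,-)$ sends cofiber sequences in the target to fiber sequences, so vanishing of $\Maps(N,M)$ at both ends of the cofiber sequence $M \xrightarrow{f_1} M \to M'$ forces vanishing on the cofiber. Applying this for each $i \geq 2$ shows that $M'$ inherits $f_i$-completeness from $M$ for all $i \geq 2$, hence $M'$ is $J'$-complete.

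The main obstacle is essentially bookkeeping of the completion closure properties: one must be careful that $f_i$-completeness is stable under cofibers (often stated as stability under limits, but valid for cofibers as well via the mapping-spectrum argument above) and that completeness with respect to a finitely generated ideal genuinely decomposes into completeness with respect to the generators. Once these two points are cleanly in place, the rest of the proof is a formal induction on $r$ and is very short.
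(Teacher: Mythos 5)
Your proof is correct and takes essentially the same route as the paper's: the paper also reduces to Lemma \ref{connective mod f}, first observing that the intermediate quotients $M\qq(f_1,\ldots,f_i)$ remain complete because they are iterated cofibers (invoking \cite[Corollary 4.2.12]{Lur11} for the generator-wise characterization of completeness), and then applying the one-generator lemma repeatedly by descending induction on $i$ — which is exactly what your induction on $r$ unwinds to. The two checkpoints you isolate (stability of $f$-completeness under cofibers and the equivalence of $J$-completeness with simultaneous $f_i$-completeness) are precisely the facts the paper uses, so no gap remains.
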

    
\begin{proof}
    Note that since $M$ is $J$-complete, by induction on $i$ we see that the sequence of module $M\qq (f_1,\ldots, f_i)$ is $J$-complete being the iterated cofiber of multiplication by $f_i$ on $M\qq(f_1\ldots, f_{i-1}).$ In particular, by \cite[Corollary 4.2.12]{Lur11} the module $M\qq(f_1\ldots, f_{i-1})$ is $f_i$-complete.

    By descending induction on $i$, we conclude by Lemma \ref{connective mod f} that $M\qq(f_1\ldots, f_{i-1})$ is connective. Thus $M$ is connective.
\end{proof}

\section{Colimit preservation and K\"unneth for derived Nygaard filtered prismatic cohomology}\label{Appendix: colim derived nyg}

We will prove a K\"unneth formula for derived Nygaard filtered prismatic cohomology. Our proof follows \cite[\S~3.5]{AnschutzLeBras2023PrismaticDieudonne} along with the constructible stratification argument already used in the main text. 

In this section we let $(A,I)$ be a bounded prism and set $\oA=A/I$. Let $R$ and $S$ be $p$-completely smooth (discrete) $\oA$-algebras. Our goal is to prove that the canonical morphism of filtered derived algebras over the filtered ring $\fil_I A$

\begin{equation}\label{can mor}
   \mathrm{KM}\colon \fil_{\nyg}F^*\Prism_{R/A}\widehat{\otimes}_{\fil_IA} \fil_\nyg F^*\Prism_{S/A}\to \fil_{\nyg} F^*\Prism_{R\widehat{\otimes}_{\oA}S/A}
\end{equation}

is an equivalence.

The functor $\fil_\nyg F^*\Prism_{-/A}$ is left Kan extended from $p$-completely smooth $\oA$-algebras. Therefore, if \ref{can kunneth} is an isomorphism, it follows formally from \cite[Construction 12.1]{BhattMorrowScholze2019THHandPadicHodge} that the functor $\fil_\nyg F^*\Prism_{-/A}$ preserves all colimits. 
In particular we have 

\begin{theorem}\label{general kunneth}
    Let $R\leftarrow T\rightarrow S$ be a span of animated $p$-complete $\oA$-algebras. Then the canonical morphism of filtered derived $(p,I)$-complete $\fil_IA$ algebras $$\fil_{\nyg}F^*\Prism_{R/A}\widehat{\otimes}_{\fil_\nyg F^*\Prism_{T/A}} \fil_\nyg F^*\Prism_{S/A}\to \fil_{\nyg} F^*\Prism_{(R\widehat{\otimes}_{T}S)/A}$$ is an isomorphism.
\end{theorem}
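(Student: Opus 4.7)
The strategy is to first prove the smooth K\"unneth \eqref{can mor} (the case $T = \oA$ with $R, S$ discrete $p$-completely smooth $\oA$-algebras), then deduce Theorem \ref{general kunneth} by the following formal maneuver. Since $F := \fil_\nyg F^*\Prism_{-/A}$ is left Kan extended from $p$-completely smooth $\oA$-algebras and \eqref{can mor} exhibits $F$ as symmetric monoidal on these generators, $F$ preserves sifted colimits and is symmetric monoidal on all animated $\oA$-algebras (both properties pass along left Kan extension since the completed tensor product of $p$-completely smooth things is $p$-completely smooth). The general base-change statement then follows by presenting $R \widehat{\otimes}_T S$ as the geometric realization $|\mathrm{Bar}_\bullet(R, T, S)|$ of the two-sided bar complex and applying $F$ termwise.

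For the smooth case \eqref{can mor}, I would pass through the Rees equivalence of Construction \ref{cons: rees cons for filnyg}, so that the comparison becomes a morphism of graded $(p, I)$-complete derived algebras over $A\langle u, t\rangle/(ut - \pi)$ (fixing an orientation of $I$ fpqc locally). Applying the constructible stratification criterion of Appendix \ref{appendix on testing flatness} in the variable $t$, it then suffices to verify the comparison after inverting $t$ and after taking the derived quotient by $t$, provided both sides are connective enough to invoke Corollary \ref{faithful on constructible}. By Lemma \ref{lem: functorialities for rees cons}(1), inverting $t$ reduces the comparison to the K\"unneth formula for $F^*\Prism_{-/A}$ in the smooth case, which is classical. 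By Lemma \ref{lem: functorialities for rees cons}(4), taking $t = 0$ reduces to the K\"unneth for $\rees(\fil_\conj \HTp_{-/A})$; after passing to the associated graded of the conjugate filtration one reduces further to the standard K\"unneth for exterior powers of the cotangent complex via $L_{R \widehat{\otimes}_\oA S / \oA} \simeq (L_{R/\oA} \widehat{\otimes}_\oA S) \oplus (R \widehat{\otimes}_\oA L_{S/\oA})$.

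The main obstacle is the connectivity bookkeeping needed to apply the constructible stratification: both sides of \eqref{can mor}, viewed as graded modules over $A\langle u, t\rangle/(ut - \pi)$ via the Rees construction, must be controlled at the level of connectivity for the natural $t$-structure on graded $(p, I)$-complete modules, uniformly in the grading. Since the Nygaard filtration is complete but unbounded, this requires a bar / semi-free resolution argument in the spirit of Lemma \ref{lem: the criterion for fpqc}, combined with $t$-completeness of the Nygaard filtration on smooth algebras from \cite[Corollary 5.2.11]{APC}. Once connectivity is under control, the two stratum-wise identifications above conclude the proof.
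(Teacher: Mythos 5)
Your overall strategy is the paper's: prove the smooth K\"unneth \eqref{can mor} via the Rees construction and a constructible stratification in the Rees variables, then deduce the general statement by left Kan extension (the paper simply cites \cite[Construction 12.1]{BhattMorrowScholze2019THHandPadicHodge} for the formal step you spell out with the bar complex). The stratum-wise identifications you propose --- prismatic K\"unneth after inverting $t$, conjugate-filtered Hodge--Tate at $t=0$, reduced by induction along the $\N$-indexed exhaustive conjugate filtration to the K\"unneth for exterior powers of the cotangent complex --- are exactly the inputs the paper uses (Lemmas \ref{hodgecohomology kunneth}, \ref{HT commutes with colimits} and \ref{kunneth for prism}; the paper packages the $t=0$ locus as two further strata $u\neq 0$ and $u=0$, which amounts to the same thing).

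The one genuine misstep is the tool you invoke for the stratification. To show $\mathrm{KM}$ is an equivalence, take its fiber $F$ and show $F=0$; the relevant fact is that $F=0$ iff $F/\!\!/t=0$ and $F[1/t]=0$ (Lemma \ref{vanish constructible}), which is a triviality with \emph{no} connectivity hypotheses: if $F/\!\!/t=0$ then $t$ acts invertibly on $F$, so $F\simeq F[1/t]=0$. Corollary \ref{faithful on constructible} is a criterion for faithful flatness of a \emph{connective} module and is not what is needed here --- indeed its connectivity hypothesis would fail outright, since $\fil_\nyg F^*\Prism_{R/A}$ for smooth $R$ of positive dimension is coconnective rather than connective. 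Invoking it is what forces you into the ``connectivity bookkeeping'' you flag as the main obstacle; that obstacle is self-imposed and dissolves entirely. No bar or semi-free resolution and no appeal to $t$-completeness of the Nygaard filtration is needed for this lemma (those arguments belong to Lemma \ref{lem: the criterion for fpqc}, which genuinely is a flatness statement). With Lemma \ref{vanish constructible} in place of Corollary \ref{faithful on constructible}, your argument is the paper's proof.
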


\begin{proof}
    Follows from Lemma \ref{can kunneth}.
\end{proof}

\begin{lemma}\label{hodgecohomology kunneth}
    The functor which sends a $p$-complete animated $\oA$-algebra $R$ to $\bigwedge^* L\widehat{\Omega}_{R/(\oA)}[-*]$ commutes with all colimits. In particular for a span $R\leftarrow T\rightarrow S$ the canonical map $$\mathrm{KM}\colon \bigwedge^* L\widehat{\Omega}_{R/(\oA)}[-*]\widehat{\otimes}_{\bigwedge^* L\widehat{\Omega}_{T/(\oA)}[-*]}\bigwedge^* L\widehat{\Omega}_{S/(\oA)}[-*]\to  \bigwedge^* L\widehat{\Omega}_{R\widehat{\otimes}_{T}S/(\oA)}$$ is an isomorphism.
\end{lemma}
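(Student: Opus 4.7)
The plan is to reduce the colimit preservation claim to a Künneth statement on finitely generated $p$-completed polynomial $\oA$-algebras, whence both assertions of the lemma will follow formally. For each fixed $n \geq 0$, the functor $R \mapsto \bigwedge^n L\widehat{\Omega}_{R/\oA}[-n]$ on $p$-complete animated $\oA$-algebras is, by construction, the $p$-completed (non-abelian) left Kan extension of its restriction to the full subcategory $\cC$ of finitely generated $p$-completed polynomial algebras $\oA[x_1,\ldots,x_d]^\wedge_p$; this is how the derived exterior powers of the $p$-completed cotangent complex are defined in the first place. Consequently, preservation of (derived $p$-completed) sifted colimits is automatic, and the only thing I would need to verify is preservation of finite coproducts, i.e. that $\mathrm{KM}$ is an isomorphism for $R, S \in \cC$.

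For $R = \oA[x_1,\ldots,x_r]^\wedge_p$ and $S = \oA[y_1,\ldots,y_s]^\wedge_p$ we have $R\widehat{\otimes}_{\oA}S = \oA[x_1,\ldots,x_r,y_1,\ldots,y_s]^\wedge_p$. Here $L\widehat{\Omega}_{(-)/\oA}$ is nothing but the module of $p$-completed Kähler differentials, concentrated in degree $0$ and finite free of the expected rank. The transitivity triangle for $\oA \to R \to R\widehat{\otimes}_{\oA}S$ then splits because both flanking terms are finite free in degree $0$, giving
$$L\widehat{\Omega}_{R\widehat{\otimes}_{\oA}S/\oA} \simeq \bigl(L\widehat{\Omega}_{R/\oA}\widehat{\otimes}_R(R\widehat{\otimes}_{\oA}S)\bigr) \oplus \bigl(L\widehat{\Omega}_{S/\oA}\widehat{\otimes}_S(R\widehat{\otimes}_{\oA}S)\bigr).$$
I would then take $n$-th wedge powers, invoke the standard decomposition $\bigwedge^n(M\oplus N)\simeq \bigoplus_{i+j=n}\bigwedge^i M \otimes \bigwedge^j N$, and sum over all $n$ with the shift $[-n]$. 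Direct comparison of graded pieces identifies the resulting object with the source of $\mathrm{KM}$, yielding the isomorphism; the shifts cohere correctly because, in the graded-algebra tensor product, a degree-$i$ piece tensored with a degree-$j$ piece lands in degree $i+j$, matching the convention $\bigwedge^{i+j}[-(i+j)]$.

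With colimit preservation established on $\cC$, it propagates to all $p$-complete animated $\oA$-algebras since every such algebra is a sifted colimit of objects of $\cC$, and the functor is left Kan extended from $\cC$. The second assertion of the lemma, the relative Künneth formula for a span $R \leftarrow T \rightarrow S$, then falls out formally from colimit preservation in the manner recorded in Remark \ref{remark: preservation of colimits implies Kunneth}: the relative tensor product $R\widehat{\otimes}_T S$ is a pushout in $p$-complete animated $\oA$-algebras, and a colimit preserving functor commutes with pushouts. The main delicate point I anticipate is justifying that the splitting of the transitivity triangle is genuinely compatible with $p$-completion and with the passage to derived wedge powers. This is harmless in our reduction because the relevant modules are finite free in degree $0$ on the nose; in more singular settings the analogous splitting can fail, which is precisely why the left Kan extension strategy (reducing everything to the polynomial case) is the right framework.
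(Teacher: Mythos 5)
Your proposal is correct and follows the same route as the paper: the paper's proof likewise reduces, via left Kan extension from ($p$-completely smooth, in particular polynomial) algebras, to checking preservation of finite coproducts, and dismisses that case as ``classical.'' Your write-up simply makes explicit the classical computation (splitting of $\Omega^1$ for a coproduct of polynomial rings and the decomposition of wedge powers of a direct sum) that the paper leaves unstated.
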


\begin{proof}
    It suffices to check that the functor which sends a $p$-completely smooth $A$-algebra $R\mapsto \bigwedge^*\Omega_{R/(\oA)}[-*]$ commutes with finite co-products. But this is classical. 
\end{proof}

\begin{lemma}\label{HT commutes with colimits}
    The functor which sends a $p$-complete animated $\oA$-algebra $R$ to the $p$-complete derived $\oA$-algebra $\HTp_{R/A}$ commutes with all colimits. In particular for a span $R\leftarrow T\rightarrow S$ the canonical morphism $$\mathrm{KM}\colon \HTp_{R/A}\widehat{\otimes}_{\HTp_{T/A}} \HTp_{S/A}\to \HTp_{R\widehat{\otimes}_T S/A}$$ is an isomorphism.
\end{lemma}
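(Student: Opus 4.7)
The plan is to reduce colimit preservation of $R\mapsto \HTp_{R/A}$ to the corresponding statement for Hodge cohomology already established in Lemma~\ref{hodgecohomology kunneth}, by passing through the conjugate filtration. The K\"unneth statement will then follow formally: since $R\widehat{\otimes}_T S$ is the pushout of the span $R\leftarrow T\to S$ in $p$-complete animated $\oA$-algebras, and any colimit-preserving functor carries pushouts to pushouts, the desired identification is automatic (computing the pushout on the target as $\HTp_{R/A}\widehat{\otimes}_{\HTp_{T/A}}\HTp_{S/A}$).

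For any $p$-complete animated $\oA$-algebra $R$, recall that $\HTp_{R/A}$ comes equipped with a functorial, increasing, exhaustive, $\N$-indexed conjugate filtration $\fil^\bullet_\conj \HTp_{R/A}$ whose graded pieces are described by the Hodge--Tate comparison
\[\gr^n_\conj \HTp_{R/A}\simeq \bigwedge\nolimits^n L\widehat{\Omega}_{R/\oA}[-n]\{-n\}.\]
The Breuil--Kisin twist $\{-n\}$ is tensoring with a fixed invertible $\oA$-module and therefore preserves colimits, so Lemma~\ref{hodgecohomology kunneth} implies that $R\mapsto \gr^n_\conj \HTp_{R/A}$ preserves all colimits for every $n\ge 0$. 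I would then induct on $n$ using the cofiber sequences
\[\fil^{n-1}_\conj \HTp_{R/A}\to \fil^n_\conj \HTp_{R/A}\to \gr^n_\conj \HTp_{R/A}\]
(cofibers being colimits themselves, and the class of colimit-preserving functors being closed under cofibers) to conclude that each $R\mapsto \fil^n_\conj \HTp_{R/A}$ preserves all colimits. Finally, exhaustiveness gives $\HTp_{R/A}\simeq \colim_n \fil^n_\conj \HTp_{R/A}$, and a filtered colimit of colimit-preserving functors is colimit-preserving, which finishes the argument.

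The main technical point to watch is ensuring that the conjugate filtration and its graded refinement assemble into a functor valued in the appropriate $\infty$-category of $p$-complete filtered derived $\oA$-algebras, so that the cofiber sequences and colimits above are computed consistently with $p$-completion. Since $p$-complete objects form a presentable stable $\infty$-category in which colimits are computed by $p$-completing the underlying colimits, this should be routine once set up, but it is the only place one needs to be careful. As a sanity check, restricting $\HTp_{-/A}$ to discrete $p$-completely smooth $\oA$-algebras recovers the classical Hodge--Tate decomposition, compatibly with the classical K\"unneth formula for Hodge cohomology of smooth schemes that underlies Lemma~\ref{hodgecohomology kunneth}.
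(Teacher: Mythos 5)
Your argument is correct and is essentially the paper's own proof: both reduce to Lemma~\ref{hodgecohomology kunneth} via the conjugate filtration, whose graded pieces are (Breuil--Kisin twists of) wedge powers of the cotangent complex, then induct on the filtration steps and pass to the exhaustive colimit. Your added care about the twist and about computing colimits in the $p$-complete category is sound but does not change the route.
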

\begin{proof}
    The Hodge-Tate complex $\HTp_{R/A}$ has a functorial exhaustive increasing $\mathbf{N}$-indexed conjugate filtration $\fil^\bullet_{\conj}\HTp_{R/A}$ and graded given by $$\gr_n \HTp_{R/A}=L\widehat{\Omega}^n_{R/(\oA)}[-n]$$ and the right hand side commutes with all colimits. It follows by induction on $n$ that $R\mapsto \fil_n \HTp_{R/A}$ commutes with all colimits and so does $R\mapsto \HTp_{R/A}=\colim_n \fil_n \HTp_{R/A}.$
\end{proof}

\begin{lemma}\label{kunneth for prism}
    The functor which sends a $p$-complete animated $\oA$-algebra $R$ to the $p$-complete derived $\oA$-algebra $\Prism_{R/A}$ commutes with all colimits. In particular for a span $R\leftarrow T\rightarrow S$ the canonical morphism $$\mathrm{KM}\colon \Prism_{R/A}\widehat{\otimes}_{\Prism_{T/A}} \Prism_{S/A}\to \Prism_{R\widehat{\otimes}_T S/A}$$ is an isomorphism. Similarly the Frobenius twisted version 
    $$\mathrm{KM}\colon F^*\Prism_{R/A}\widehat{\otimes}_{ F^*\Prism_{T/A}} F^* \Prism_{S/A}\to  F^*\Prism_{R\widehat{\otimes}_T S/A}$$ is also an isomorphism.
\end{lemma}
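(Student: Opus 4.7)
The plan is to reduce both the colimit-preservation and Künneth claims for $\Prism_{-/A}$ to the analogous statements for Hodge-Tate cohomology (Lemma \ref{HT commutes with colimits}), via the Hodge-Tate comparison $\Prism_{-/A} \otimes_A^L \oA \simeq \HTp_{-/A}$ together with the $(p,I)$-completeness of prismatic cohomology.

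By construction, $\Prism_{-/A}$ is left Kan extended from $p$-completely smooth $\oA$-algebras and therefore preserves sifted colimits; since every colimit is built from sifted colimits and finite coproducts, it will suffice to prove the span version of the Künneth formula
$$\mathrm{KM}\colon \Prism_{R/A} \widehat{\otimes}_{\Prism_{T/A}} \Prism_{S/A} \to \Prism_{R \widehat{\otimes}_T S / A}$$
for an arbitrary span $R \leftarrow T \to S$ of $p$-complete animated $\oA$-algebras (the case $T=\oA$ handles coproducts and gives colimit preservation; the general case then follows because $\Prism_{-/A}$ now commutes with all colimits). Both source and target of $\mathrm{KM}$ are $(p,I)$-complete derived $A$-algebras, in particular $I$-complete, so by derived Nakayama (as recorded in Appendix \ref{appendix: connectivity mod J} and its references) it suffices to check that $\mathrm{KM}$ is an equivalence after derived reduction modulo $I$.

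The next ingredient is a base-change compatibility: for $(p,I)$-complete derived $A$-modules $M$, $N$, $P$, the derived reduction modulo $I$ of the $(p,I)$-completed tensor product $M \widehat{\otimes}_N P$ agrees with the $p$-completed derived tensor product of the reductions $M/^L I$, $N/^L I$, $P/^L I$. This is a consequence of two facts: derived $I$-completion preserves mod-$I$ quotients (i.e., $(X^{\wedge}_I)/^L I \simeq X/^L I$), and $p$-completion, being a left adjoint, preserves cofibre sequences and finite colimits. Combining this with the Hodge-Tate comparison identifies $\mathrm{KM} \otimes_A^L \oA$ with the Hodge-Tate Künneth map
$$\HTp_{R/A} \widehat{\otimes}_{\HTp_{T/A}} \HTp_{S/A} \to \HTp_{R \widehat{\otimes}_T S / A},$$
which is an equivalence by Lemma \ref{HT commutes with colimits}.

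Finally, the Frobenius-twisted statement follows formally: $F^*\Prism_{R/A} = \Prism_{R/A} \widehat{\otimes}_{A,\varphi} A$ is the $(p,I)$-completed base change of $\Prism_{R/A}$ along $\varphi \colon A \to A$, and completed base change preserves colimits in $R$ as well as the formation of Künneth maps, so both assertions transfer directly from the untwisted case. The main obstacle in the whole argument is the base-change identification used in step two; once this is settled, everything is a matter of invoking the Hodge-Tate Künneth formula already proved.
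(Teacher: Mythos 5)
Your proposal is correct and follows essentially the same route as the paper: reduce to the mod $I$ statement using $(p,I)$-completeness of both sides, identify the reduction with the Hodge--Tate K\"unneth map of Lemma \ref{HT commutes with colimits}, and deduce the Frobenius-twisted version from the fact that completed base change along $\varphi$ is symmetric monoidal and colimit-preserving. You simply spell out more of the intermediate details (left Kan extension, compatibility of completed tensor products with mod-$I$ reduction) that the paper leaves implicit.
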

\begin{proof}
By $I$-completeness of the modules in question this may be checked mod $I$ where this follows from \ref{HT commutes with colimits}. The second statement follows because $A\widehat{\otimes}_{A,\varphi}(-)$ is symmetric monoidal and preserves all colimits.
\end{proof}

The idea of using the Rees construction in the next argument was suggested to us by Bhargav Bhatt.

\begin{lemma}\label{can kunneth}
    The morphism in \ref{can mor} is an isomorphism. 
\end{lemma}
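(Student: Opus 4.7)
The plan is to use the Rees construction to recast the filtered K\"unneth statement as a graded one, and then to verify the graded statement by testing on a constructible stratification analogous to the one used in Section \ref{sec: cons strat of knyg} and the proof of Theorem \ref{thm: main thm}. The inputs on each stratum will be supplied by the unfiltered K\"unneth lemmas (\ref{hodgecohomology kunneth}, \ref{HT commutes with colimits}, and \ref{kunneth for prism}) just established.

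Concretely, let $d \in A$ generate $I$. Applying $\rees(-)$ converts $\mathrm{KM}$ into a map of $(p,I)$-complete graded derived $A\langle u,t\rangle/(ut-d)$-algebras
$$\mathrm{KM}^{\mathrm{Rees}}\colon \rees(\fil_\nyg F^*\Prism_{R/A})\,\widehat{\underline{\otimes}}\,\rees(\fil_\nyg F^*\Prism_{S/A}) \longrightarrow \rees(\fil_\nyg F^*\Prism_{(R\widehat{\otimes}_{\oA}S)/A}).$$
Both sides are $(p,I)$-complete, so by Corollary \ref{connectivity modulo J} we may check that the fiber of $\mathrm{KM}^{\mathrm{Rees}}$ vanishes after passing modulo $(p,d)$, i.e.\ working over the graded ring $(\oA/p)[u,t]/(ut)$. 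A graded analogue of Corollary \ref{faithful on constructible} then allows us to verify the isomorphism on the constructible stratification of this base given by the open loci $\{u \neq 0\}$, $\{t \neq 0\}$, and the closed stratum $\{u = t = 0\}$.

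On $\{u \neq 0\}$, a bounded-prism version of Lemma \ref{lem: functorialities for rees cons}(2) identifies the specialization of $\mathrm{KM}^{\mathrm{Rees}}$ with the K\"unneth map for the unfiltered $\Prism_{-/A}$, which is an isomorphism by Lemma \ref{kunneth for prism}. On $\{t \neq 0\}$, the Bhatt--Lurie localization argument of Lemma \ref{lem: Bhatt-Lurie localisation} identifies the specialization with the K\"unneth map for $F^*\Prism_{-/A}$, again handled by Lemma \ref{kunneth for prism}. On the closed stratum $\{u = t = 0\}$, the specialization is the graded Hodge algebra $\bigoplus_i \bigwedge^i L_{R/\oA}[-i]$ (the analogue of Lemma \ref{lem: functorialities for rees cons}(6)), whose K\"unneth property is Lemma \ref{hodgecohomology kunneth}. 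The main obstacle is setting up the graded, $(p,I)$-complete version of the constructible testing principle of Appendix \ref{appendix on testing flatness}; once that formalism is in place, the geometric content of the proof is entirely supplied by the already-established unfiltered K\"unneth lemmas on each of the three strata.
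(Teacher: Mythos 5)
Your proposal is correct and follows essentially the same route as the paper's proof: pass through the (symmetric monoidal) Rees equivalence, reduce to an orientable prism with $I=(d)$, and kill the fiber of the K\"unneth map stratum by stratum on $A[u,t]/(ut-d)$ using Lemmas \ref{hodgecohomology kunneth}, \ref{HT commutes with colimits} and \ref{kunneth for prism} (your choice of strata differs harmlessly from the paper's, which uses $\{t\neq 0\}$, $\{t=0,u\neq 0\}$ via Hodge--Tate, and $\{u=t=0\}$). The one clarification worth making is that the testing principle required is not a graded analogue of Corollary \ref{faithful on constructible} but the much simpler Lemma \ref{vanish constructible} --- vanishing of a module is detected by $M/\!\!/f$ and $M[1/f]$ with no flatness, gradedness, or completeness hypotheses --- so the ``main obstacle'' you identify is already supplied by Appendix \ref{appendix on testing flatness}.
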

\begin{proof}
We may prove this at the level of modules. Note that the modules live in the category $\Mod_{\fil_I A}(\Mod^{(p,I)\text{-comp}}_A).$. We may use the Rees equivalence, which is symmetric monoidal to transport these to a statement about $\Gr\Mod^{(p,I)\text{-comp}}_{\mathrm{Rees}(\fil_IA)}.$

Unwinding definitions we have a morphism of modules 
$$\mathrm{KM}\colon \rees(\fil_{\nyg}F^*\Prism_{R/A})\widehat{\otimes}_{\rees(\fil_IA)} \rees(\fil_\nyg F^*\Prism_{S/A})\to \rees(\fil_{\nyg} F^*\Prism_{R\otimes_{\oA}S}).$$

After a faithfully flat base change by \cite[Lemma 3.7]{BS19}  we may assume that the prism is orientable. So we have $I=(d)$ for some generator $d.$
Then $\fil_I(A)=A[u,t]/(ut-d)$ as a graded ring. We may then view the algebras in question as $(p,d)$-complete algebras.
We set $F=\fib(\mathrm{KM})$ and we will show that $F$ vanishes.

When $u=0,t=0$ the fiber vanishes by Lemma \ref{hodgecohomology kunneth}. When $u\neq 0$ and $t=0$ then the fiber vanishes by Lemma \ref{HT commutes with colimits}. When $t\neq 0$, the fiber vanishes by Lemma \ref{kunneth for prism}. 
Then we conclude by Lemma \ref{vanish constructible}.

\end{proof}

\begin{lemma}\label{vanish constructible}
    Let $A$ be an animated ring and $f\in \pi_0(A).$ Then $M\in \Mod_A$ is $0$ iff $M/\!\!/f=0$ and $M[1/f]=0.$
\end{lemma}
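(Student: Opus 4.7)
The forward direction is immediate: if $M=0$ then certainly $M\qq f=0$ and $M[1/f]=0$.

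For the converse, I would exploit two cofiber sequences that describe $M\qq f$ and $M[1/f]$ in terms of $M$ and multiplication by $f$. Concretely, the cofiber sequence
$$M\xrightarrow{f}M\to M\qq f$$
shows that the hypothesis $M\qq f=0$ is equivalent to the statement that multiplication by $f$ is an equivalence on $M$. I would then recall that $M[1/f]$ may be computed as the sequential colimit
$$M[1/f]\simeq \colim\bigl(M\xrightarrow{f}M\xrightarrow{f}M\xrightarrow{f}\cdots\bigr).$$
Once multiplication by $f$ is an equivalence, every transition map in this diagram is an equivalence, so the structure map $M\to M[1/f]$ is itself an equivalence. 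Combining with the hypothesis $M[1/f]=0$, we conclude $M\simeq M[1/f]=0$.

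The argument has no real obstacle; the only thing to be slightly careful about is the identification of $M[1/f]$ with this sequential colimit, which one can also phrase (as done in Remark \ref{loc coh}) via the cofiber sequence $\Gamma_{(f)}(M)\to M\to M[1/f]$. From that perspective, the hypothesis $M[1/f]=0$ says $M\simeq \Gamma_{(f)}(M)$, so the homotopy groups of $M$ carry a locally nilpotent $f$-action; this is incompatible with $f$ acting invertibly unless $M=0$. Either route works, and I would use whichever reads more cleanly in the paper's notation.
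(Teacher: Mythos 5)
Your argument is correct and is essentially identical to the paper's proof: both deduce from $M\qq f=0$ that multiplication by $f$ is an equivalence on $M$, hence $M\simeq M[1/f]=0$. The alternative route via $\Gamma_{(f)}$ is a fine variant but not needed.
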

\begin{proof}
     Note that $M/\!\!/f=0$ so we know that multiplication by $f$ on $M$ is an equivalence. In particular $M[1/f]\simeq M$ but the left hand side is $0.$ 
\end{proof}

\bibliographystyle{alpha}
\bibliography{references2}

\begin{thebibliography}{{Sta}25}

\bibitem[AB23]{AnschutzLeBras2023PrismaticDieudonne}
Johannes Ansch{\"u}tz and Arthur-C{\'e}sar~Le Bras.
\newblock Prismatic {D}ieudonn{\'e} theory.
\newblock {\em Forum Math. Pi}, 11:1--92, 2023.
\newblock \url{https://www.cambridge.org/core/journals/forum-of-mathematics-pi/article/prismatic-dieudonne-theory/F37879186079B53D1BB92F28AFD12BFC}.

\bibitem[Bha22]{Bhatt22}
Bhargav Bhatt.
\newblock Prismatic {$F$}-gauges, 2022.
\newblock Lecture notes. \url{https://www.math.ias.edu/~bhatt/teaching/mat549f22/lectures.pdf}.

\bibitem[Bha23]{Bhatt2023AlgGeomMixedChar}
Bhargav Bhatt.
\newblock Algebraic geometry in mixed characteristic.
\newblock In {\em Proceedings of the International Congress of Mathematicians 2022}, volume~2, pages 712--748. EMS Press, Berlin, 2023.

\bibitem[BL22a]{APC}
Bhargav Bhatt and Jacob Lurie.
\newblock Absolute prismatic cohomology, 2022.
\newblock \url{https://arxiv.org/abs/2201.06120}.

\bibitem[BL22b]{APCII}
Bhargav Bhatt and Jacob Lurie.
\newblock The prismatization of {$p$}-adic formal schemes, 2022.
\newblock \url{https://arxiv.org/abs/2201.06124}.

\bibitem[BMS18]{BhattMorrowScholze2018IntegralPadicHodge}
Bhargav Bhatt, Matthew Morrow, and Peter Scholze.
\newblock Integral {$p$}-adic {H}odge theory.
\newblock {\em Publ. Math. Inst. Hautes {\'E}tudes Sci.}, 128:219--397, 2018.
\newblock \url{https://link.springer.com/article/10.1007/s10240-019-00102-z}.

\bibitem[BMS19]{BhattMorrowScholze2019THHandPadicHodge}
Bhargav Bhatt, Matthew Morrow, and Peter Scholze.
\newblock Topological {H}ochschild homology and integral {$p$}-adic {H}odge theory.
\newblock {\em Publ. Math. Inst. Hautes {\'E}tudes Sci.}, 129:199--310, 2019.
\newblock \url{https://link.springer.com/article/10.1007/s10240-019-00106-9}.

\bibitem[BS22]{BS19}
Bhargav Bhatt and Peter Scholze.
\newblock Prisms and prismatic cohomology.
\newblock {\em Ann. of Math. (2)}, 196(3):1135--1275, 2022.

\bibitem[{\v{C}}S24]{CesnaviciusScholze2024FlatPurity}
Kestutis {\v{C}}esnavi{\v{c}}ius and Peter Scholze.
\newblock Purity for flat cohomology.
\newblock {\em Ann. of Math. (2)}, 199(1):51--180, 2024.
\newblock \url{https://annals.math.princeton.edu/2024/199-1/p02}.

\bibitem[DI87]{DeligneIllusie1987Relevements}
Pierre Deligne and Luc Illusie.
\newblock Rel\`evements modulo $p^2$ et d\'ecomposition du complexe de de {R}ham.
\newblock {\em Invent. Math.}, 89(2):247--270, 1987.

\bibitem[Dri08]{Drin08}
Vladimir Drinfeld.
\newblock {DG} quotients of {DG} categories, 2008.
\newblock \url{https://arxiv.org/abs/math/0210114}.

\bibitem[Dri19]{Drinfeld2019StackyCrystalline}
Vladimir Drinfeld.
\newblock A stacky approach to crystalline (and prismatic) cohomology, 2019.
\newblock Video lecture (October 3, 2019). \url{https://www.youtube.com/watch?v=3wCk3qsFWLA}.

\bibitem[Dri21]{Drinfeld2021RingGroupoid}
Vladimir Drinfeld.
\newblock On a notion of ring groupoid, 2021.
\newblock \url{https://arxiv.org/abs/2104.07090}.

\bibitem[Dri24]{Drin20}
Vladimir Drinfeld.
\newblock Prismatization.
\newblock {\em Selecta Math. (N.S.)}, 30(3), 2024.
\newblock \url{https://link.springer.com/article/10.1007/s00029-024-00937-3}.

\bibitem[Fal89]{faltings}
Gerd Faltings.
\newblock Crystalline cohomology and {$p$}-adic {G}alois representations.
\newblock In {\em Algebraic analysis, geometry, and number theory (Baltimore, MD, 1988)}, pages 25--80. Johns Hopkins Univ. Press, Baltimore, MD, 1989.

\bibitem[FJ13]{FontaineJannsen-Frobenius-Gauges-I}
Jean-Marc Fontaine and Uwe Jannsen.
\newblock {F}robenius gauges and a new theory of {$p$}-torsion sheaves in characteristic {$p$}. {I}, 2013.
\newblock \url{https://arxiv.org/abs/1304.3740}.

\bibitem[GM25]{GM24}
Zachary Gardner and Keerthi Madapusi.
\newblock An algebraicity conjecture of {Drinfeld} and the moduli of {$p$}-divisible groups, 2025.
\newblock \url{https://arxiv.org/abs/2412.10226}.

\bibitem[GP18]{GwilliamPavlov2018EnhancingFilteredDerived}
Owen Gwilliam and Dmitri Pavlov.
\newblock Enhancing the filtered derived category.
\newblock {\em J. Pure Appl. Algebra}, 222(11):3621--3674, 2018.

\bibitem[Hol23]{Holeman-Derived-delta-Rings}
Adam Holeman.
\newblock Derived {$\delta$}-rings and relative prismatic cohomology, 2023.
\newblock \url{https://arxiv.org/abs/2303.17447}.

\bibitem[HP24]{HesselholtPstragowski-Dirac-geometry-II}
Lars Hesselholt and Piotr Pstragowski.
\newblock Dirac geometry {II}: {C}oherent cohomology.
\newblock {\em Forum Math. Sigma}, 12:e27, 2024.

\bibitem[LM24]{LiMondal-Endomorphisms-deRham}
Shizhang Li and Shubhodip Mondal.
\newblock On endomorphisms of the de {R}ham cohomology functor.
\newblock {\em Geom. Topol.}, 28(2):759--802, 2024.
\newblock Preprint: arXiv:2109.04303.

\bibitem[LM25]{LahotiManam2025CohomologyRingStacks}
Dhilan Lahoti and Deven Manam.
\newblock Cohomology theories in the moduli of ring stacks, 2025.
\newblock \url{https://arxiv.org/abs/2510.09582}.

\bibitem[Lur04]{dag}
Jacob Lurie.
\newblock Derived {A}lgebraic {G}eometry, 2004.
\newblock \url{https://www.math.ias.edu/~lurie/papers/DAG.pdf}.

\bibitem[Lur09]{HTT}
Jacob Lurie.
\newblock Higher {T}opos {T}heory, 2009.
\newblock \url{https://www.math.ias.edu/~lurie/papers/HTT.pdf}.

\bibitem[Lur11a]{Lur11b}
Jacob Lurie.
\newblock Derived {A}lgebraic {G}eometry {VII}: {S}pectral {S}chemes, 2011.
\newblock \url{https://www.math.ias.edu/~lurie/papers/DAG-VII.pdf}.

\bibitem[Lur11b]{Lur11}
Jacob Lurie.
\newblock Derived {A}lgebraic {G}eometry {XII}: {P}roper {M}orphisms, {C}ompletions, and the {G}rothendieck {E}xistence {T}heorem, 2011.
\newblock \url{https://www.math.ias.edu/~lurie/papers/DAG-XII.pdf}.

\bibitem[Lur17]{HA}
Jacob Lurie.
\newblock Higher {A}lgebra, 2017.
\newblock \url{https://www.math.ias.edu/~lurie/papers/HA.pdf}.

\bibitem[Lur18]{Lur18}
Jacob Lurie.
\newblock Spectral {A}lgebraic {G}eometry, 2018.
\newblock \url{https://www.math.ias.edu/~lurie/papers/SAG-rootfile.pdf}.

\bibitem[Mat16]{galoisgroup}
Akhil Mathew.
\newblock The {G}alois group of a stable homotopy theory.
\newblock {\em Adv. Math.}, 291:403--541, 2016.

\bibitem[MM24]{MM24}
Akhil Mathew and Shubhodip Mondal.
\newblock Affine stacks and derived rings, 2024.
\newblock Manuscript. Available at \url{https://personal.math.ubc.ca/~smondal/papers/affinestacks.pdf}.

\bibitem[Mon22]{reconstruction_mondal}
Shubhodip Mondal.
\newblock Reconstruction of the stacky approach to de {R}ham cohomology.
\newblock {\em Math. Z.}, 302:687--693, 2022.

\bibitem[Pen25]{pentland2025syntomification}
Dylan Pentland.
\newblock Syntomification and crystalline local systems, 2025.
\newblock \url{https://arxiv.org/abs/2510.16961}.

\bibitem[Pet25]{Petrov2025QuasiFSplitDeRham}
Alexander Petrov.
\newblock Decomposition of de {R}ham complex for quasi-{$F$}-split varieties, 2025.
\newblock \url{https://arxiv.org/abs/2502.13356}.

\bibitem[Rak20]{Rak20}
Arpon Raksit.
\newblock Hochschild homology and the derived de {R}ham complex revisited, 2020.
\newblock \url{https://arxiv.org/abs/2007.02576}.

\bibitem[Sah25a]{Sah25Nyg}
Shubhankar Sahai.
\newblock The relative {N}ygaard filtered prismatization, 2025.
\newblock Preprint (forthcoming).

\bibitem[Sah25b]{Sah25Syn}
Shubhankar Sahai.
\newblock The syntomification of {F}robenius liftable schemes, 2025.
\newblock Preprint (forthcoming).

\bibitem[Sah26]{dalggauges}
Shubhankar Sahai.
\newblock Derived algebras on formal stacks and prismatic gauges, February 2026.
\newblock \url{https://arxiv.org/abs/2602.08942}.

\bibitem[{Sta}25]{stacks-project}
{Stacks Project Authors}.
\newblock The {S}tacks project.
\newblock \url{https://stacks.math.columbia.edu}, 2025.

\bibitem[To{\"e}06]{Toen2006ChampsAffines}
Bertrand To{\"e}n.
\newblock Champs affines.
\newblock {\em Selecta Math. (N.S.)}, 12(1):39--134, 2006.

\end{thebibliography}

\end{document}